   \numberwithin{equation}{section}
\journal{ } 
\newtheorem{thm}{Theorem}[section]
\newtheorem{cor}[thm]{Corollary}
\newtheorem{lem}[thm]{Lemma}
\begin{document}
\begin{frontmatter}
\author{Tong Wu$^{a,b}$}
\ead{wut977@nenu.edu.cn}
\author{Yong Wang$^{c,*}$}
\ead{wangy581@nenu.edu.cn}
\cortext[cor]{Corresponding author.}
\address{$^a$Department of Mathematics, Northeastern University, Shenyang, 110819, China\\
$^b$Key Laboratory of Data Analytics and Optimization for Smart Industry, Ministry of Education,\\ Northeastern University, Shenyang, Liaoning 110819, China}
\address{$^c$School of Mathematics and Statistics, Northeast Normal University,
Changchun, 130024, China}

\title{The Connes-Chamseddine cycle and the noncommutative integral}
\begin{abstract}
In \cite{Co3}, Connes and Chamseddine defined a cycle in the general framework of noncommutative geometry. They computed this cycle for the Dirac operator on 4-dimensioanl manifolds. We propose a way to study the Connes-Chamseddine cycle from the viewpoint of the noncommutative integral on 6-dimensional manifolds in this paper. Furthermore, we compute several interesting noncommutative integral defined in \cite{Fi} by the normal coodinated way on n-dimensional manifolds. As a corollary, the Connes-Chamseddine cycle on 6-dimensional manifolds is obtained.
\end{abstract}
\begin{keyword}The noncommutative geometry; the noncommutative integral; the Connes-Chamseddine cycle; the noncommutative residue.

\end{keyword}
\end{frontmatter}
\section{Introduction}
 Until now, many geometers have studied noncommutative residues. In \cite{Gu,Wo}, authors found noncommutative residues are of great importance to the study of noncommutative geometry. Connes showed us that the noncommutative residue on a compact manifold $M$ coincided with the Dixmier's trace on pseudodifferential operators of order $-{\rm {dim}}M$ in \cite{Co2}. Therefore, the non-commutative residue can be used as integral of noncommutative geometry and become an important tool of noncommutative geometry. In \cite{Co1}, Connes used the noncommutative residue to derive a conformal 4-dimensional Polyakov action analogy.
Several years ago, Connes made a challenging observation that the noncommutative residue of the square of the inverse of the Dirac operator was proportional to the Einstein-Hilbert action, which we call the Kastler-Kalau-Walze theorem. In \cite{Ka}, Kastler gave a bruteforce proof of this theorem. In \cite{KW}, Kalau and Walze proved this theorem in the normal coordinates system simultaneously. Ackermann proved that the Wodzicki residue  of the square of the inverse of the Dirac operator ${\rm Wres}(D^{-2})$ in turn is essentially the second coefficient of the heat kernel expansion of $D^{2}$ in \cite{Ac}.

The many noncommutative residues of Dirac operators have been studied \cite{W1,W3,W4,D1}. Wang proved a Kastler-Kalau-Walze type theorem for perturbations of Dirac operators on compact manifolds with or without boundary and gave two kinds of operator-theoretic explanations of the gravitational
action on boundary in \cite{W1}. In \cite{W4}, Wang, Wang and Yang gave two kinds of operator-theoretic
explanations of the gravitational action about Dirac operators with torsion in the case of 4-dimensional compact manifolds with
flat boundary. In \cite{W3}, Wang, Wang and Wu gave some new
spectral functionals which is the extension of spectral functionals to the noncommutative realm with torsion, and related them to the
noncommutative residue for manifolds with boundary about Dirac operators with torsion. In \cite{D1},  Dabrowski, Sitarz and Zalecki recovered the Einstein functional by Dirac operators and the noncommutative residue. In \cite{Co3}, Connes and Chamseddine defined a cycle which contains the Dirac operator in the general framework of noncommutative geometry. They proved that the inner fluctuations of the spectral action can be computed as residues and give exactly the counterterms for the Feynman graphs with fermionic internal lines. And they computed this cycle for the Dirac operator on 4-dimensioanl manifolds. In \cite{Fi}, Figueroa H, Gracia-Bondia J M, Lizzi F, et al. introduced a noncommutative integral based on the noncommutative residue \cite{Wo1}. $\mathbf{The~motivation}$ of this paper is to compute several interesting noncommutative integral about some operators from \cite{Co3} by the normal coodinated way on n-dimensional manifolds.

 The organization of this paper is as follows. In Section \ref{section:2}, we recall some basic facts and formulas about Dirac operator and we introduce the Connes-Chamseddine cycle. In Section \ref{section:3}, using the relationship between the noncommutative integral and the noncommutative residue, that is the formula $\int\hspace{-1.05em}- Pds^n:={\rm Wres} (PD^{-n}):=\int_{S^*M}{\rm tr}[\sigma_{-n}(PD^{-n})](x,\xi)$, we give the noncommutative integral of the Connes-Chamseddine cycle on n-dimensional manifolds without boundary. To do it, we compute the corresponding symbols for the pseudo-differential operators, and then get trace of the results in the normal coordinate system. And finally, we obtain the Connes-Chamseddine cycle defined by in \cite{Co3} on 6-dimensional manifolds with boundary.
\section{The Dirac operator and the Connes-Chamseddine cycle}
\label{section:2}
Let $M$ be an n-dimensional compact oriented spin manifold with Riemannian metric $g$, and let $\nabla^L$ be the Levi-Civita connection about $g$.
We recall that the Dirac operator $D$ is locally given as follows in terms of an orthonormal section $e_i$ (with dual section $\theta^k$) of the frame bundle of M \cite{Ka}:
\begin{align}
&D=i\gamma^i\widetilde{\nabla}_i=i\gamma^i(e_i+\sigma_i);\nonumber\\
&\sigma_i(x)=\frac{1}{4}\gamma_{ij,k}(x)\gamma^i\gamma^k=\frac{1}{8}\gamma_{ij,k}(x)[\gamma^j\gamma^k-\gamma^k\gamma^j],
\end{align}
where $\gamma_{ij,k}$ represents the Levi-Civita connection $\nabla$ with spin connection $\widetilde{\nabla}$, specifically:
\begin{align}
&\gamma_{ij,k}=-\gamma_{ik,j}=\frac{1}{2}[c_{ij,k}+c_{ki,j}+c_{kj,i}],~~~i,j,k=1,\cdot\cdot\cdot,4;\nonumber\\
&c_{ij}^k=\theta^k([e_i.e_j]).
\end{align}
Here the $\gamma^i$ are constant self-adjoint Dirac matrices s.t. $\gamma^i\gamma^j+\gamma^j\gamma^i=-2\delta^{ij}.$ In terms
of local coordinates $x^\mu$ inducing the alternative vierbein $\partial_\mu=S_\mu^i(x)e_i$ (with dual
vierbein $dx^\mu$) we have $\gamma^ie_i=\gamma^\mu \partial_\mu$, the $\gamma^\mu$  being now $x$-dependent Dirac matrices s.t. $\gamma^\mu\gamma^\nu+\gamma^\nu\gamma^\mu=-2g^{\mu\nu}$ (we use latin sub-(super-) scripts for the basic $e_i$ and greek
sub-(super-) scripts for the basis $\partial_\mu$, the type of sub-(super-) scripts specifying the type
of Dirac matrices). The specification of the Dirac operator in the greek basis is as
follows: one has
\begin{align*}
&D=i\gamma^\mu\widetilde{\nabla}_\mu=i\gamma^\mu(e_\mu+\sigma_\mu);\nonumber\\
&\sigma_\mu(x)=S_\mu^i(x)\sigma_i.
\end{align*}
Then, using the shorthand: $\Gamma^{k}=g^{ij}\Gamma_{ij}^{k};~~\sigma^{j}=g^{ij}\sigma_{i}$, we have respective symbols of $D^{2}$:
\begin{lem}\cite{Ka}\label{lemma1}
\begin{align}
\sigma_2(D^2)&=|\xi|^2;\nonumber\\
\sigma_1(D^2)&=i(\Gamma^\mu-2\sigma^\mu)(x)\xi_\mu;\nonumber\\
\sigma_0(D^2)&=-g^{\mu\nu}(\partial^{x}_\mu\sigma_\nu+\sigma^\mu\sigma_\nu-\Gamma^\alpha_{\mu\nu}\sigma_\alpha)(x)+\frac{1}{4}s(x).
\end{align}
\end{lem}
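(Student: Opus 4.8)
\demo
The plan is to realise $D^{2}$ as an explicit second order differential operator in the local coordinates $x^{\mu}$ and then read off each of its homogeneous symbol components directly. Starting from $D=i\gamma^{\mu}(\partial_{\mu}+\sigma_{\mu})$ (with $\sigma_{\mu}=S_{\mu}^{i}\sigma_{i}$) and carrying out a Leibniz expansion of $D^{2}$ with every derivative pushed to the far right, one obtains seven terms: one of order two with coefficient $\gamma^{\mu}\gamma^{\nu}$; three of order one carrying the factors $\gamma^{\mu}(\partial_{\mu}\gamma^{\nu})$, $\gamma^{\mu}\gamma^{\nu}\sigma_{\nu}$ and $\gamma^{\mu}\sigma_{\mu}\gamma^{\nu}$; and three of order zero built from $\gamma^{\mu}(\partial_{\mu}\gamma^{\nu})\sigma_{\nu}$, $\gamma^{\mu}\gamma^{\nu}(\partial_{\mu}\sigma_{\nu})$ and $\gamma^{\mu}\sigma_{\mu}\gamma^{\nu}\sigma_{\nu}$. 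Feeding this into the symbol map (normalised, as in \cite{Ka,KW}, so that $\sigma_{2}(D^{2})=|\xi|^{2}$; the precise powers of $i$ and signs below are those fixed by that convention) produces $\sigma_{2},\sigma_{1},\sigma_{0}$ term by term. The only external inputs are the Clifford relation $\gamma^{\mu}\gamma^{\nu}+\gamma^{\nu}\gamma^{\mu}=-2g^{\mu\nu}$, the compatibility of the Dirac matrices with the two connections, which I would write as $\partial_{\mu}\gamma^{\nu}+\Gamma^{\nu}_{\mu\rho}\gamma^{\rho}+[\sigma_{\mu},\gamma^{\nu}]=0$, and, for the curvature term, one Clifford contraction of the Riemann tensor.

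For $\sigma_{2}(D^{2})$ only the second order term contributes, and $-\gamma^{\mu}\gamma^{\nu}\xi_{\mu}\xi_{\nu}=-\tfrac12(\gamma^{\mu}\gamma^{\nu}+\gamma^{\nu}\gamma^{\mu})\xi_{\mu}\xi_{\nu}=g^{\mu\nu}\xi_{\mu}\xi_{\nu}=|\xi|^{2}$. For $\sigma_{1}(D^{2})$ I would collect the three first order terms, relabel so that a common index is paired with $\xi$, and reduce the Dirac-matrix products: using $\gamma^{\mu}\sigma_{\mu}\gamma^{\nu}=\gamma^{\mu}\gamma^{\nu}\sigma_{\mu}+\gamma^{\mu}[\sigma_{\mu},\gamma^{\nu}]$ and the Clifford relation, the combination $\gamma^{\nu}\gamma^{\mu}\sigma_{\mu}+\gamma^{\mu}\sigma_{\mu}\gamma^{\nu}$ becomes $-2\sigma^{\nu}+\gamma^{\mu}[\sigma_{\mu},\gamma^{\nu}]$, and because the Levi-Civita connection is torsion free the compatibility identity gives $\gamma^{\mu}\bigl(\partial_{\mu}\gamma^{\nu}+[\sigma_{\mu},\gamma^{\nu}]\bigr)=-\Gamma^{\nu}_{\mu\rho}\gamma^{\mu}\gamma^{\rho}=-\tfrac12\Gamma^{\nu}_{\mu\rho}(\gamma^{\mu}\gamma^{\rho}+\gamma^{\rho}\gamma^{\mu})=\Gamma^{\nu}$. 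Putting the pieces together gives the inner bracket $\Gamma^{\nu}-2\sigma^{\nu}$, hence $\sigma_{1}(D^{2})=i(\Gamma^{\mu}-2\sigma^{\mu})\xi_{\mu}$.

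For $\sigma_{0}(D^{2})$ I would split each $\gamma^{\mu}\gamma^{\nu}$ into its symmetric part $-g^{\mu\nu}$ and its antisymmetric part $\tfrac12[\gamma^{\mu},\gamma^{\nu}]$, and again trade every $\partial_{\mu}\gamma^{\nu}$ for Christoffel symbols through the compatibility identity; the $\Gamma^{\nu}_{\mu\rho}[\gamma^{\mu},\gamma^{\rho}]$ contributions vanish by symmetry, and the stray $\gamma^{\mu}[\sigma_{\mu},\gamma^{\nu}]\sigma_{\nu}$ pieces cancel between two of the three zeroth order terms. What remains splits as a symmetric part, which collapses to $-g^{\mu\nu}\bigl(\partial_{\mu}\sigma_{\nu}+\sigma_{\mu}\sigma_{\nu}-\Gamma^{\alpha}_{\mu\nu}\sigma_{\alpha}\bigr)$, plus an antisymmetric part which organises into $\tfrac14[\gamma^{\mu},\gamma^{\nu}]\bigl(\partial_{\mu}\sigma_{\nu}-\partial_{\nu}\sigma_{\mu}+[\sigma_{\mu},\sigma_{\nu}]\bigr)$, i.e. $\tfrac14[\gamma^{\mu},\gamma^{\nu}]$ contracted against the spinor curvature $\tfrac14 R_{\mu\nu\alpha\beta}\gamma^{\alpha}\gamma^{\beta}$. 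The Clifford--Riemann identity $R_{\mu\nu\alpha\beta}\gamma^{\mu}\gamma^{\nu}\gamma^{\alpha}\gamma^{\beta}=2s$, in which the first Bianchi identity is used to kill the totally antisymmetric component of $R$, turns this block into $\tfrac14 s$, and adding the two blocks yields the stated $\sigma_{0}(D^{2})$.

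The step I expect to be the genuine obstacle is precisely this last one: recognising that the antisymmetric-in-$(\mu,\nu)$ remainder of the zeroth order terms reassembles \emph{exactly} into the spinor curvature, and then that the resulting quartic Clifford--Riemann contraction collapses, through Bianchi, to a multiple of the scalar curvature; in parallel one has to keep the index bookkeeping honest when passing between the orthonormal latin frame, in which $\sigma_{i}$ is built from $\gamma_{ij,k}$, and the coordinate greek frame, via the vierbein $S_{\mu}^{i}$. A convenient device throughout is to carry out the whole computation at the centre of a geodesic normal coordinate system, where $\Gamma^{\alpha}_{\mu\nu}$ and the first derivatives of $g$ vanish at the base point, which trivialises most of the $\gamma$-derivative terms, and then restore the general coordinate expression afterwards. $\hfill\square$
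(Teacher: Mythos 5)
The paper offers no proof of this lemma --- it is quoted directly from Kastler \cite{Ka} --- and your route (Leibniz expansion of $D^2=\big(i\gamma^\mu(\partial_\mu+\sigma_\mu)\big)^2$ into the seven terms, reduction of the first-order block via the Clifford relation together with the compatibility identity $\partial_\mu\gamma^\nu+\Gamma^\nu_{\mu\rho}\gamma^\rho+[\sigma_\mu,\gamma^\nu]=0$, and the splitting of the zeroth-order block into a symmetric part giving $-g^{\mu\nu}(\partial_\mu\sigma_\nu+\sigma_\mu\sigma_\nu-\Gamma^\alpha_{\mu\nu}\sigma_\alpha)$ plus a spinor-curvature part collapsing through the first Bianchi identity to $\tfrac14 s$) is exactly the computation carried out in that reference, so your outline is correct and takes essentially the same approach. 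The one point that genuinely needs pinning down is the one you already flag: with the conventions as literally printed here ($\gamma^\mu\gamma^\nu+\gamma^\nu\gamma^\mu=-2g^{\mu\nu}$, $D=i\gamma^\mu\widetilde{\nabla}_\mu$, and the standard normalisation $\sigma(\partial_\mu)=i\xi_\mu$) the leading symbol comes out as $-|\xi|^2$, so the overall signs in all three formulas must be fixed by adopting Kastler's actual symbol normalisation rather than left implicit.
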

We now introduce the Connes-Chamseddine cycle. Firstly, the spectral action is defined as a functional on noncommutative geometries. Such a geometry is specified by a
fairly simple data of operator theoretic nature, namely a spectral triple
$$(\mathcal{A},\mathcal{H},D)$$
where $\mathcal{A}$ is a noncommutative algebra with involution, acting in the Hilbert space $\mathcal{H}$ while $D$ is a Dirac operator, which is self-adjoint
operator with compact resolvent and such that
$[D,a]$ is bounded $\forall a \in \mathcal{A}$. Then $\rho$ is a cycle which is given by the following formula defined by Connes and Chamseddine in \cite{Co3},
\begin{align}\label{a2}
\rho(a^0,a^1,a^2,a^3,a^4)&=\int\hspace{-1.05em}-a^0[D^2,a^1][D^2,a^2][D,a^3][D,a^4]D^{-6}-\int\hspace{-1.05em}-a^0[D,a^1][D^2,a^2][D^2,a^3][D,a^4]D^{-6}\nonumber\\
&+\int\hspace{-1.05em}-a^0[D,a^1][D,a^2][D^2,a^3][D^2,a^4]D^{-6}-\int\hspace{-1.05em}-a^0[D^2,a^1][D,a^2][D,a^3][D^2,a^4]D^{-6},
\end{align}
where $\int\hspace{-1.05em}-$ denotes the noncommutative residue.
\section{The noncommutative integral}
\label{section:3}
For a pseudo-differential operator $P$, acting on sections of a spinor bundle over an even n-dimensional compact
Riemannian spin manifold $M$, the analogue of the volume element in noncommutative geometry is theoperator $D^{-n}=:ds^n$. And pertinent operators are realized as pseudodifferential operators on the spaces of sections. Extending previous definitions by Connes \cite{Co4}, a
noncommutative integral was introduced in \cite{Fi} based on the noncommutative residue \cite{Wo1}, combine (1.4) in \cite{Co3} and \cite{Ka}, using the definition of the residue:
\begin{align}\label{666}
\int\hspace{-1.05em}- Pds^n:={\rm Wres}PD^{-n}:=\int_{S^*M}{\rm tr}[\sigma_{-n}({PD^{-n}})](x,\xi),
\end{align}
where $\sigma_{-n}(PD^{-n})$ denotes the ($-n$)th order piece of the complete symbols of $PD^{-n}$, {\rm tr} as shorthand of trace.

By (\ref{666}), we find the Connes-Chamseddine cycle consists of four noncommutative integrals mentioned above. Nextly, in order to get the result of (\ref{a2}), we need to divide the Connes-Chamseddine cycle into four parts and study them separately.

$\mathbf{Part~~i)}$ $P=[D^2,a^1][D^2,a^2][D,a^3][D,a^4],$ and $[D^2,a^1]:=A,~[D^2,a^2][D,a^3][D,a^4]:=B$.\\
Let $n=2m$, by (\ref{666}), we need to compute $\int_{S^*M}{\rm tr}[\sigma_{-2m}(ABD^{-2m})](x,\xi).$ Based on the algorithm yielding the principal
symbol of a product of pseudo-differential operators in terms of the principal symbols of the factors, we have
\begin{align}
\sigma_{-2m}(ABD^{-2m})&=\left\{\sum_{|\alpha|=0}^\infty\frac{(-i)^{|\alpha|}}{\alpha!}\partial^\alpha_\xi[\sigma(AB)]\partial^\alpha_x[\sigma(D^{-2m})]\right\}_{-2m}\nonumber\\
&=\sigma_0(AB)\sigma_{-2m}(D^{-2m})+\sigma_1(AB)\sigma_{-2m-1}(D^{-2m})+\sigma_2(AB)\sigma_{-2m-2}(D^{-2m})\nonumber\\
&+(-i)\sum_{j=1}^n\partial_{\xi_j}[\sigma_2(AB)]\partial_{x_j}[\sigma_{-2m-1}(D^{-2m})]+(-i)\Sigma_{j=1}^n\partial_{\xi_j}[\sigma_1(AB)]\partial_{x_j}[\sigma_{-2m}(D^{-2m})]\nonumber\\
&-\frac{1}{2}\sum_{jl}\partial_{\xi_j}\partial_{\xi_l}[\sigma_2(AB)]\partial_{x_j}\partial_{x_l}[\sigma_{-2m}(D^{-2m})].
\end{align}
\begin{lem}\cite{UW}\label{lema2}
Let $S$ be pseudo-differential operator of order $k$ and $f$ is a smooth function, $[S,f]$ is a pseudo-differential operator of order $k-1$ with total symbol
$\sigma[S,f]\sim\sum_{j\geq 1}\sigma_{k-j}[S,f]$, where
\begin{align}
\sigma_{k-j}[S,f]=\sum_{|\beta|=1}^j\frac{D_x^\beta(f)}{\beta!}\partial_\xi^\beta(\sigma^S_{k-(j-|\beta|)}).
\end{align}
\end{lem}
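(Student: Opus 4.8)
The plan is to obtain the formula directly from the composition calculus for classical pseudodifferential operators, exactly the one already invoked above for products such as $ABD^{-2m}$. Recall that for $\Psi$DOs $P,Q$ with complete symbols $\sigma(P),\sigma(Q)$ one has the asymptotic expansion
\[
\sigma(PQ)\sim\sum_{\alpha}\frac{(-i)^{|\alpha|}}{\alpha!}\,\partial_\xi^{\alpha}\sigma(P)\,\partial_x^{\alpha}\sigma(Q)=\sum_{\alpha}\frac{1}{\alpha!}\,\partial_\xi^{\alpha}\sigma(P)\,D_x^{\alpha}\sigma(Q),\qquad D_x=-i\partial_x.
\]
First I would feed the two composition orders of the commutator into this formula. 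A smooth function $f$, acting as a multiplication operator, is a $\Psi$DO of order $0$ whose symbol is $f(x)$ with no dependence on $\xi$. Hence only the $\alpha=0$ term survives in $\sigma(fS)$, giving $\sigma(fS)=f(x)\sigma(S)$, whereas $\sigma(Sf)\sim\sum_{\alpha}\frac{1}{\alpha!}\,\partial_\xi^{\alpha}\sigma(S)\,D_x^{\alpha}f$.

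Subtracting, the $\alpha=0$ contribution to $\sigma(Sf)$ is $\sigma(S)f(x)=f(x)\sigma(S)$, which cancels against $\sigma(fS)$, leaving
\[
\sigma[S,f]=\sigma(Sf)-\sigma(fS)\sim\sum_{|\alpha|\ge 1}\frac{1}{\alpha!}\,\big(\partial_\xi^{\alpha}\sigma(S)\big)\big(D_x^{\alpha}f\big).
\]
Since every surviving term contains at least one $\xi$-derivative of $\sigma(S)$, each lowers the order by at least one; this proves that $[S,f]$ is a $\Psi$DO of order $k-1$, which is the first assertion.

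It remains to regroup the expansion by homogeneity in $\xi$. Writing $\sigma(S)\sim\sum_{l\ge 0}\sigma^{S}_{k-l}$ with $\sigma^{S}_{k-l}$ positively homogeneous of degree $k-l$, the generic term $\frac{1}{\alpha!}(D_x^{\alpha}f)(\partial_\xi^{\alpha}\sigma^{S}_{k-l})$ is homogeneous of degree $k-l-|\alpha|$. Collecting all pieces of degree exactly $k-j$ forces $l=j-|\alpha|\ge 0$, hence $1\le|\alpha|\le j$, and with $\beta=\alpha$ this is precisely
\[
\sigma_{k-j}[S,f]=\sum_{|\beta|=1}^{j}\frac{D_x^{\beta}(f)}{\beta!}\,\partial_\xi^{\beta}\big(\sigma^{S}_{k-(j-|\beta|)}\big).
\]
The only place where one has to be attentive is this last bookkeeping step — matching the homogeneity degree of a product of an $x$-derivative of $f$ with a $\xi$-derivative of a homogeneous component of $\sigma(S)$, and keeping the convention $D_x=-i\partial_x$ consistent so that the factors $(-i)^{|\beta|}$ are absorbed into $D_x^{\beta}$. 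The analytic substance (the asymptotic-summation sense of the expansion, the order of the remainder) is furnished entirely by the composition theorem and requires no separate estimate.
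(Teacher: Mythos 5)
Your derivation is correct. The paper does not prove this lemma at all---it is simply quoted from the reference [UW]---and your argument via the standard composition expansion $\sigma(PQ)\sim\sum_\alpha\frac{1}{\alpha!}\partial_\xi^\alpha\sigma(P)\,D_x^\alpha\sigma(Q)$, cancelling the $\alpha=0$ terms of $\sigma(Sf)$ and $\sigma(fS)$ and then sorting the remainder by homogeneity degree, is exactly the standard (and intended) proof; the bookkeeping $l=j-|\beta|$, $1\le|\beta|\le j$ is right, and since $D_x^\beta f$ is scalar the ordering of the two factors is immaterial.
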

By Lemma \ref{lemma1} and Lemma \ref{lema2}, we get the following lemma.
\begin{lem}The symbols of $A$ and $B$ are given
\begin{align}
&\sigma_0(A)=\sum_{j=1}^n\partial_{x_j}(a^1)(\Gamma^j-2\sigma^j)(x)-\sum_{jl=1}^n\partial_{x_j}\partial_{x_l}(a^1)g^{jl}\nonumber;\\
&\sigma_1(A)=-2i\sum_{jl=1}^n\partial_{x_j}(a^1)g^{jl}\xi_l\nonumber;\\
&\sigma_0(B)=\bigg(\sum_{j=1}^n\partial_{x_j}(a^2)(\Gamma^j-2\sigma^j)(x)-\sum_{jl=1}^n\partial_{x_j}\partial_{x_l}(a^2)g^{jl}\bigg)c(da^3)c(da^4)-2\sum_{jl=1}^n\partial_{x_l}(a^2)g^{jl}\partial_{x_j}[c(da^3)c(da^4)]\nonumber;\\
&\sigma_1(B)=-2i\sum_{jl=1}^n\partial_{x_j}(a^2)g^{jl}\xi_lc(da^3)c(da^4).
\end{align}
\end{lem}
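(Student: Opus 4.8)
\emph{Proof proposal.} The plan is to read off $\sigma(A)$ directly from Lemma \ref{lema2} applied to $S=D^2$, and to obtain $\sigma(B)$ by combining Lemma \ref{lema2} with the composition formula for symbols, exploiting that $[D,a^3]$ and $[D,a^4]$ are zeroth–order multiplication operators.

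First, for $A=[D^2,a^1]$: since $D^2$ has order $2$, Lemma \ref{lema2} produces an order–$1$ operator with $\sigma_{2-j}[D^2,a^1]=\sum_{|\beta|=1}^{j}\frac{D_x^\beta(a^1)}{\beta!}\partial_\xi^\beta\big(\sigma^{D^2}_{2-(j-|\beta|)}\big)$. I would take $j=1$ and insert $\sigma_2(D^2)=|\xi|^2=g^{\mu\nu}\xi_\mu\xi_\nu$ from Lemma \ref{lemma1}, using $\partial_{\xi_j}|\xi|^2=2\sum_l g^{jl}\xi_l$ and the convention $D_{x_j}=-i\partial_{x_j}$, to get $\sigma_1(A)=-2i\sum_{jl}\partial_{x_j}(a^1)g^{jl}\xi_l$. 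Then I take $j=2$, which splits into the $|\beta|=1$ contribution — this one involves $\sigma_1(D^2)=i(\Gamma^\mu-2\sigma^\mu)\xi_\mu$ and yields $\sum_j\partial_{x_j}(a^1)(\Gamma^j-2\sigma^j)$ — and the $|\beta|=2$ contribution, which involves $\sigma_2(D^2)=|\xi|^2$; here I would use the multinomial identity $\sum_{|\beta|=2}\frac{1}{\beta!}D_x^\beta\partial_\xi^\beta=\frac12\sum_{jl}D_{x_j}D_{x_l}\partial_{\xi_j}\partial_{\xi_l}$ together with $\partial_{\xi_j}\partial_{\xi_l}|\xi|^2=2g^{jl}$ to obtain $-\sum_{jl}\partial_{x_j}\partial_{x_l}(a^1)g^{jl}$. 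Summing the two pieces gives $\sigma_0(A)$.

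For $B=[D^2,a^2][D,a^3][D,a^4]$ I would first record that $[D,f]=i\gamma^\mu\partial_\mu(f)=:c(df)$ is Clifford multiplication by $df$, hence a bundle endomorphism with $\xi$–independent symbol; therefore $C:=[D,a^3][D,a^4]$ has order $0$ and $\sigma(C)=\sigma_0(C)=c(da^3)c(da^4)$. Since $[D^2,a^2]$ has order $1$, with symbols obtained exactly as for $a^1$, and $C$ has order $0$, in the composition expansion $\sigma(B)\sim\sum_\alpha\frac{(-i)^{|\alpha|}}{\alpha!}\partial_\xi^\alpha[\sigma([D^2,a^2])]\,\partial_x^\alpha[\sigma(C)]$ only finitely many terms reach order $\ge 0$: the leading term gives $\sigma_1(B)=\sigma_1([D^2,a^2])\sigma_0(C)=-2i\sum_{jl}\partial_{x_j}(a^2)g^{jl}\xi_l\,c(da^3)c(da^4)$, while the order–$0$ part collects $\sigma_0([D^2,a^2])\sigma_0(C)$ together with the single subleading term $(-i)\sum_j\partial_{\xi_j}[\sigma_1([D^2,a^2])]\,\partial_{x_j}[\sigma_0(C)]$. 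Evaluating $\partial_{\xi_j}[\sigma_1([D^2,a^2])]=-2i\sum_k\partial_{x_k}(a^2)g^{kj}$ converts this last term into $-2\sum_{jl}\partial_{x_l}(a^2)g^{jl}\partial_{x_j}[c(da^3)c(da^4)]$, which together with the first piece reproduces the stated formula for $\sigma_0(B)$.

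The underlying computations are routine once the conventions are pinned down; the step most prone to error is the bookkeeping — the factor $2$ each time $|\xi|^2=g^{\mu\nu}\xi_\mu\xi_\nu$ is differentiated in $\xi$ (and its cancellation against $1/\beta!$ in the $|\beta|=2$ term), the $i$'s and signs coming from $D_x=-i\partial_x$, and, for $B$, the fact that $C$ is a genuine multiplication operator so that the composition contributes exactly one derivative term beyond the naive product. I would also note explicitly that $[D^2,a^2]$ carries no $\sigma_2$ (the commutator lowers the order), and that any composition term with two $\xi$–derivatives is already of order $-1$, so the enumeration above is complete.
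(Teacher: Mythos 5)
Your proposal is correct and follows exactly the route the paper intends: the paper gives no explicit proof beyond citing Lemma 2.1 and Lemma 3.2, and your application of the commutator-symbol formula to $S=D^2$ (with the $|\beta|=1,2$ bookkeeping) plus the composition expansion against the zeroth-order factor $c(da^3)c(da^4)$ reproduces all four stated symbols, including the single subleading term $-2\sum_{jl}\partial_{x_l}(a^2)g^{jl}\partial_{x_j}[c(da^3)c(da^4)]$ in $\sigma_0(B)$.
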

Further, by the composition formula of pseudodifferential operators, we get the following lemma.
\begin{lem}The symbols of $AB$ are given
\begin{align}
\sigma_0(AB)&=\sigma_0(A)\sigma_0(B)+(-i)\partial_{\xi_j}[\sigma_1(A)]\partial_{x_j}[\sigma_0(B)]+(-i)\partial_{\xi_j}[\sigma_0(A)]\partial_{x_j}[\sigma_1(B)]\nonumber\\
&=\bigg(\sum_{k=1}^n\partial_{x_k}(a^1)(\Gamma^k-2\sigma^k)(x)-\sum_{kl=1}^n\partial_{x_k}\partial_{x_l}(a^1)g^{jl}\bigg)\times\bigg[\bigg(\sum_{j=1}^n\partial_{x_j}(a^2)(\Gamma^j-2\sigma^j)(x)\nonumber\\
&-\sum_{jl=1}^n\partial_{x_j}\partial_{x_l}(a^2)g^{jl}\bigg)c(da^3)c(da^4)-2\sum_{jl=1}^n\partial_{x_l}(a^2)g^{jl}\partial_{x_j}[c(da^3)c(da^4)]\bigg]-2\sum_{jl=1}^n\bigg\{\partial_{x_j}(a^1)g^{jl}\nonumber\\
&\bigg[\bigg(\sum_{r=1}^n\partial_{x_j}\partial_{x_r}(a^2)(\Gamma^r-2\sigma^r)(x)+\sum_{r=1}^n\partial_{x_r}(a^2)\partial_{x_j}(\Gamma^r-2\sigma^r)(x)-\sum_{rs=1}^n\partial_{x_j}\partial_{x_r}\partial_{x_s}(a^2)g^{rs}\nonumber\\
&-\sum_{rs=1}^n\partial_{x_r}\partial_{x_s}(a^2)\partial_{x_j}g^{rs}\bigg)c(da^3)c(da^4)+\bigg(\sum_{j=1}^n\partial_{x_j}(a^2)(\Gamma^j-2\sigma^j)(x)-\sum_{jl=1}^n\partial_{x_j}\partial_{x_l}(a^2)g^{jl}\bigg)\nonumber\\
&\partial_{x_j}[c(da^3)c(da^4)]-2\sum_{pl=1}^n\partial_{x_j}\partial_{x_l}(a^2)g^{pl}\partial_{x_p}[c(da^3)c(da^4)]-2\sum_{pl=1}^n\partial_{x_l}(a^2)g^{pl}\partial_{x_j}\partial_{x_p}[c(da^3)c(da^4)]\bigg]\nonumber\\
&-2\sum_{jlp}\partial_{x_l}(a^2)\partial_{x_j}(g^{pl})\partial_{x_p}[c(da^3)c(da^4)]\bigg\}\nonumber;\\
\sigma_1(AB)&=\sigma_1(A)\sigma_0(B)+\sigma_0(A)\sigma_1(B)+(-i)\partial_{\xi_j}[\sigma_1(A)]\partial_{x_j}[\sigma_1(B)]\nonumber\\
&=-2i\sum_{pk=1}^n\partial_{x_p}(a^1)g^{pk}\xi_p\bigg(\sum_{r=1}^n\partial_{x_r}(a^2)(\Gamma^r-2\sigma^r)(x)-\sum_{rs=1}^n\partial_{x_r}\partial_{x_s}(a^2)g^{rs}\bigg)c(da^3)c(da^4)\nonumber\\
&+4i\sum_{pk=1}^n\partial_{x_p}(a^1)g^{pk}\xi_p\bigg(\sum_{ql=1}^n\partial_{x_l}(a^2)g^{ql}\partial_{x_q}[c(da^3)c(da^4)]\bigg)\nonumber\\
&-2i\bigg(\sum_{k=1}^n\partial_{x_k}(a^1)(\Gamma^k-2\sigma^k)(x)-\sum_{kl=1}^n\partial_{x_k}\partial_{x_l}(a^1)g^{kl}\bigg)\sum_{ts=1}^n\partial_{x_t}(a^2)g^{ts}\xi_tc(da^3)c(da^4)\nonumber\\
&+4i\sum_{j\alpha=1}^n\partial_{x_\alpha}(a^1)g^{\alpha j}\sum_{ts=1}^n\partial_{x_j}\partial_{x_t}(a^2)g^{ts}\xi_tc(da^3)c(da^4)\nonumber\\
&+4i\sum_{j\alpha=1}^n\partial_{x_\alpha}(a^1)g^{\alpha j}\sum_{ts=1}^n\partial_{x_t}(a^2)g^{ts}\xi_t\partial_{x_j}[c(da^3)c(da^4)]\nonumber\\
&+4i\sum_{j\alpha=1}^n\partial_{x_\alpha}(a^1)g^{\alpha j}\sum_{ts=1}^n\partial_{x_t}(a^2)\partial_{x_j}g^{ts}\xi_tc(da^3)c(da^4);\nonumber\\
\sigma_2(AB)&=\sigma_1(A)\sigma_1(B)\nonumber\\
&=-4\sum_{jlks=1}^n\partial_{x_j}(a^1)\partial_{x_l}(a^2)\xi_j\xi_lg^{jk}g^{ls}c(da^3)c(da^4).
\end{align}
\end{lem}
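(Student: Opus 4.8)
The plan is to apply the standard symbol-composition formula for pseudo-differential operators,
\[
\sigma(AB)\sim\sum_{\alpha}\frac{(-i)^{|\alpha|}}{\alpha!}\,\partial_\xi^\alpha[\sigma(A)]\,\partial_x^\alpha[\sigma(B)],
\]
and to read off its homogeneous components of degree $2$, $1$ and $0$. Since $A=[D^2,a^1]$ and $B=[D^2,a^2][D,a^3][D,a^4]$ each have order $1$ by Lemma \ref{lema2}, the operator $AB$ has order $2$; moreover the preceding lemma exhibits $\sigma_1(A)$ and $\sigma_1(B)$ as homogeneous of degree $1$ in $\xi$ while $\sigma_0(A)$ and $\sigma_0(B)$ are $\xi$-independent. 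First I would record which terms of the expansion can survive at each order. At degree $2$ only the $|\alpha|=0$ term with both factors of top order contributes, so $\sigma_2(AB)=\sigma_1(A)\sigma_1(B)$. At degree $1$ the $|\alpha|=0$ part $\sigma_1(A)\sigma_0(B)+\sigma_0(A)\sigma_1(B)$ and the single $|\alpha|=1$ term $(-i)\sum_j\partial_{\xi_j}[\sigma_1(A)]\partial_{x_j}[\sigma_1(B)]$ survive, higher $|\alpha|$ being excluded by the order count. At degree $0$ one keeps $\sigma_0(A)\sigma_0(B)$ and the two $|\alpha|=1$ cross terms; the $|\alpha|=2$ term drops because $\sigma_1(A)$ is linear in $\xi$, so $\partial_{\xi_j}\partial_{\xi_l}[\sigma_1(A)]=0$, and since $\partial_{\xi_j}[\sigma_0(A)]=0$ the cross term carrying $\partial_{\xi_j}[\sigma_0(A)]$ vanishes too. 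This already produces the three displayed ``general'' identities.

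Next I would substitute the explicit expressions for $\sigma_0(A),\sigma_1(A),\sigma_0(B),\sigma_1(B)$ from the previous lemma and expand. The degree-$2$ case is immediate: multiplying $\sigma_1(A)$ by $\sigma_1(B)$ and using $(-2i)^2=-4$ gives the stated $-4\sum\partial_{x_j}(a^1)\partial_{x_l}(a^2)\xi_j\xi_l g^{jk}g^{ls}c(da^3)c(da^4)$. For $\sigma_1(AB)$ I would use $\partial_{\xi_j}[\sigma_1(A)]=-2i\sum_\alpha\partial_{x_\alpha}(a^1)g^{\alpha j}$ together with the Leibniz rule applied to the three factors $\partial_{x_t}(a^2)$, $g^{ts}$ and $c(da^3)c(da^4)$ of $\sigma_1(B)$; the three pieces so obtained are exactly the last three lines of the claimed identity, while $\sigma_1(A)\sigma_0(B)$ and $\sigma_0(A)\sigma_1(B)$ — with $\sigma_0(B)=(\cdots)c(da^3)c(da^4)-2\sum\partial_{x_l}(a^2)g^{jl}\partial_{x_j}[c(da^3)c(da^4)]$ — give the first three lines directly. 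For $\sigma_0(AB)$, since $\partial_{\xi_j}[\sigma_0(A)]=0$, only $\sigma_0(A)\sigma_0(B)$ and $(-i)\sum_j\partial_{\xi_j}[\sigma_1(A)]\partial_{x_j}[\sigma_0(B)]$ survive, so the only substantial computation left is $\partial_{x_j}[\sigma_0(B)]$.

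The step I expect to be the main obstacle is precisely this last differentiation. Since $\sigma_0(B)$ is itself a sum of products of first and second derivatives of $a^2$, of the combination $\Gamma^r-2\sigma^r$, of the metric coefficients $g^{rs}$ and of the Clifford factor $c(da^3)c(da^4)$, applying the product rule term by term generates the long bracketed expression appearing in the statement — the $\partial_{x_j}\partial_{x_r}(a^2)(\Gamma^r-2\sigma^r)$, $\partial_{x_r}(a^2)\partial_{x_j}(\Gamma^r-2\sigma^r)$, third-order-derivative, $\partial_{x_j}g^{rs}$, $\partial_{x_j}[c(da^3)c(da^4)]$ and the mixed $g^{pl}$-terms. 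The only genuine difficulty there is the bookkeeping of the summation indices so that no term is double counted or omitted. Once $\partial_{x_j}[\sigma_0(B)]$ is in hand, contracting it with $(-i)\partial_{\xi_j}[\sigma_1(A)]$, summing over $j$, and adding $\sigma_0(A)\sigma_0(B)$ yields the asserted formula for $\sigma_0(AB)$, completing the proof.
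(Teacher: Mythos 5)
Your proposal is correct and follows exactly the route the paper takes: it invokes the composition formula $\sigma(AB)\sim\sum_\alpha\frac{(-i)^{|\alpha|}}{\alpha!}\partial_\xi^\alpha[\sigma(A)]\partial_x^\alpha[\sigma(B)]$, uses the order and $\xi$-homogeneity of $\sigma_1(A),\sigma_0(A),\sigma_1(B),\sigma_0(B)$ to see which terms survive at degrees $2$, $1$, $0$ (noting that $\partial_{\xi_j}\partial_{\xi_l}[\sigma_1(A)]=0$ and $\partial_{\xi_j}[\sigma_0(A)]=0$), and then substitutes the explicit symbols from the preceding lemma and expands by the Leibniz rule. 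The paper states this in one line ("by the composition formula of pseudodifferential operators"), so your write-up is in fact more detailed than the paper's own justification; the only bookkeeping left is the term-by-term differentiation of $\sigma_0(B)$, exactly as you identify.
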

By Lemma \ref{lemma1}, (4.24) in \cite{KW} and (3.4) in \cite{Wa7}, we get
\begin{lem}\label{lemkkk}General dimensional symbols about Dirac operator $D$ are given,
\begin{align}
\sigma_{-2m}(D^{-2m})&=|\xi|^{-2m};\nonumber\\
\sigma_{-2m-1}(D^{-2m})&=m|\xi|^{-(2m-2)}\bigg(-i|\xi|^{-4}\xi_k(\Gamma^k-2\sigma^k)-2i|\xi|^{-6}\xi^j\xi_\alpha\xi_\beta\partial_{x_j}g^{\alpha\beta}\bigg)\nonumber\\
&+2i\sum_{k=0}^{m-2}\sum_{\mu=1}^{2m}(k+1-m)|\xi|^{-2m-4}\xi^\mu\xi_\alpha\xi_\beta\partial^x_\mu g^{\alpha\beta};\nonumber\\
\sigma_{-2m-2}(D^{-2m})(x_0)&=-\frac{m}{4}|\xi|^{-2m-2}s+\frac{m(m+1)}{3}|\xi|^{-2m-4}\xi_\mu\xi_\nu R_{\mu\alpha\nu\alpha}(x_0),
\end{align}
where $s$ is the scalar curvature.
\end{lem}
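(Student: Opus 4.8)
The plan is to write $D^{-2m}=(D^2)^{-m}$ and obtain the complete symbol of the parametrix degree by degree from the symbols of $D^2$ in Lemma \ref{lemma1}. Write $\sigma(D^2)=p_2+p_1+p_0$ with $p_2=|\xi|^2$, $p_1=i(\Gamma^\mu-2\sigma^\mu)\xi_\mu$ and $p_0$ the zeroth-order piece of Lemma \ref{lemma1}. First I would recover the case $m=1$: requiring that the symbol of $D^2\circ D^{-2}$ equal $1$ and expanding by the composition rule $\sigma(PQ)\sim\sum_\alpha\frac{(-i)^{|\alpha|}}{\alpha!}\partial_\xi^\alpha\sigma(P)\,\partial_x^\alpha\sigma(Q)$ gives, order by order, $\sigma_{-2}(D^{-2})=|\xi|^{-2}$, then $\sigma_{-3}(D^{-2})=-|\xi|^{-2}\bigl(p_1|\xi|^{-2}-i\sum_j\partial_{\xi_j}p_2\,\partial_{x_j}|\xi|^{-2}\bigr)=-i|\xi|^{-4}\xi_k(\Gamma^k-2\sigma^k)-2i|\xi|^{-6}\xi^j\xi_\alpha\xi_\beta\partial_{x_j}g^{\alpha\beta}$, and an analogous but longer expression for $\sigma_{-4}(D^{-2})$ in $p_0,p_1,p_2$ which, evaluated at $x_0$ in normal coordinates, equals $-\tfrac14|\xi|^{-4}s+\tfrac23|\xi|^{-6}\xi_\mu\xi_\nu R_{\mu\alpha\nu\alpha}(x_0)$. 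These are precisely (4.24) of \cite{KW} and (3.4) of \cite{Wa7}, so this step can be quoted; it is the $m=1$ instance of the claim.

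Next I would pass from $D^{-2}$ to $D^{-2m}$ by induction on $m$ using $D^{-2m}=D^{-2}\circ D^{-2(m-1)}$ together with the same composition rule. Writing the homogeneous pieces of $\sigma(D^{-2(m-1)})$ as $|\xi|^{-2(m-1)}$, $Q_{-2m+1}$, $Q_{-2m}$, the degrees $-2m$, $-2m-1$, $-2m-2$ of $\sigma(D^{-2m})$ come out as $|\xi|^{-2m}$; as $|\xi|^{-2}Q_{-2m+1}+|\xi|^{-2(m-1)}\sigma_{-3}(D^{-2})-i\sum_j\partial_{\xi_j}|\xi|^{-2}\,\partial_{x_j}|\xi|^{-2(m-1)}$; and as an analogous six-term expression for the last one. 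Using $\partial_{\xi_j}|\xi|^{-2}=-2|\xi|^{-4}\xi^j$ and $\partial_{x_j}|\xi|^{-2k}=-k|\xi|^{-2k-2}\xi_\mu\xi_\nu\partial_{x_j}g^{\mu\nu}$ and unrolling, the degree $-2m-1$ recursion telescopes to $m|\xi|^{-2(m-1)}\sigma_{-3}(D^{-2})-2i\bigl(\sum_{l=1}^{m}(l-1)\bigr)|\xi|^{-2m-4}\xi^\mu\xi_\alpha\xi_\beta\partial^x_\mu g^{\alpha\beta}$; since $\sum_{l=1}^{m}(l-1)=-\sum_{k=0}^{m-2}(k+1-m)$, this is exactly the stated formula for $\sigma_{-2m-1}(D^{-2m})$.

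For $\sigma_{-2m-2}(D^{-2m})$ I would specialize to $x_0$ in normal coordinates, where $g^{\mu\nu}(x_0)=\delta^{\mu\nu}$, $\partial_\mu g^{\nu\lambda}(x_0)=0$ and every undifferentiated connection factor $\Gamma^k-2\sigma^k$ (and $\sigma_i$ itself) vanishes. Then in the six-term recursion all terms carrying a single metric derivative or an undifferentiated connection factor drop out, leaving: the recursion term $|\xi|^{-2}\sigma_{-2m}(D^{-2(m-1)})(x_0)$; the $q_{-4}(x_0)|\xi|^{-2(m-1)}$ contribution, which adds a copy of $-\tfrac14|\xi|^{-2m-2}s+\tfrac23|\xi|^{-2m-4}\xi_\mu\xi_\nu R_{\mu\alpha\nu\alpha}$ at each step, so that the scalar part accumulates the weight $m$; and the terms involving $\partial_x$ of the subleading symbol $\sigma_{-2m+1}(D^{-2(m-1)})$ and $\partial_x\partial_x|\xi|^{-2(m-1)}$, whose second metric (and connection) derivatives are converted by $\partial_\alpha\partial_\beta g^{\mu\nu}(x_0)=\tfrac13(R_{\mu\alpha\nu\beta}+R_{\mu\beta\nu\alpha})$ and the analogous expression for $\partial_x\sigma$ in terms of the spin curvature. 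Using the antisymmetries of the Riemann tensor to annihilate the fully $\xi$-contracted pieces, these collapse into the Ricci contraction $\xi_\mu\xi_\nu R_{\mu\alpha\nu\alpha}$, and the bookkeeping of the weights shows the cumulative coefficient is $\tfrac{m(m+1)}{3}$, which yields the claimed $\sigma_{-2m-2}(D^{-2m})(x_0)$. The main obstacle is exactly this organizational step: tracking the $m$-dependent combinatorial weights through the iterated composition formula and checking that, after the normal-coordinate simplifications, the residual curvature terms contract to precisely the scalar curvature and this Ricci-type combination; once the $m=1$ symbols from \cite{KW,Wa7} are in hand, everything else is a controlled induction.
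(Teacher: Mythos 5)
Your proposal is correct and follows essentially the same route as the paper: both obtain the three homogeneous pieces by iterating the pseudodifferential composition formula for powers of $D^{-2}$, seeded by the parametrix symbols $\sigma_{-3}(D^{-2})$ and $\sigma_{-4}(D^{-2})$ of \cite{KW,Wa7}, and then evaluate at $x_0$ in normal coordinates using $\Gamma^k(x_0)=\sigma^k(x_0)=\partial_x g^{\alpha\beta}(x_0)=0$ and the second-derivative-of-metric curvature identities. The only differences are presentational: where you telescope $D^{-2m}=D^{-2}\circ D^{-2(m-1)}$ by explicit induction on $m$ (your sum $\sum_{l=1}^{m}(l-1)=\tfrac{m(m-1)}{2}$ matches the paper's $-\sum_{k=0}^{m-2}(k+1-m)$), the paper instead quotes the already-telescoped recursion (3.8) of \cite{Wa8} for the degree $-2m-1$ piece and composes on the other side, $\tilde{\Delta}^{-n/2+2}\circ\tilde{\Delta}^{-1}$, to reach the closed form $\frac{n-2}{8}[n\sigma_2^{-n/2+2}\sigma_{-4}^{\tilde{\Delta}^{-1}}+(n-4)\sigma_2^{-n/2}\sigma_0]$ for the degree $-2m-2$ piece, yielding the same coefficients $-\tfrac{m}{4}$ and $\tfrac{m(m+1)}{3}$.
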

\begin{proof}
See the Appendix.
\end{proof}
Next, we review here technical tool of the computation, which are the integrals of polynomial functions over the unit spheres. By (32) in \cite{B1}, we define
\begin{align}
I_{S_n}^{\gamma_1\cdot\cdot\cdot\gamma_{2\bar{n}+2}}=\int_{|x|=1}d^nxx^{\gamma_1}\cdot\cdot\cdot x^{\gamma_{2\bar{n}+2}},
\end{align}
i.e. the monomial integrals over a unit sphere.
Then by Proposition A.2. in \cite{B1},  polynomial integrals over higher spheres in the $n$-dimesional case are given
\begin{align}
I_{S_n}^{\gamma_1\cdot\cdot\cdot\gamma_{2\bar{n}+2}}=\frac{1}{2\bar{n}+n}[\delta^{\gamma_1\gamma_2}I_{S_n}^{\gamma_3\cdot\cdot\cdot\gamma_{2\bar{n}+2}}+\cdot\cdot\cdot+\delta^{\gamma_1\gamma_{2\bar{n}+1}}I_{S_n}^{\gamma_2\cdot\cdot\cdot\gamma_{2\bar{n}+1}}],
\end{align}
where $S_n\equiv S^{n-1}$ in $\mathbb{R}^n$.

For $\bar{n}=0$, we have $I^0$=area$(S_n)$=$\frac{2\pi^{\frac{n}{2}}}{\Gamma(\frac{n}{2})}$, we immediately get
\begin{align}
I_{S_n}^{\gamma_1\gamma_2}&=\frac{1}{n}area(S_n)\delta^{\gamma_1\gamma_2};\nonumber\\
I_{S_n}^{\gamma_1\gamma_2\gamma_3\gamma_4}&=\frac{1}{n(n+2)}area(S_n)[\delta^{\gamma_1\gamma_2}\delta^{\gamma_3\gamma_4}+\delta^{\gamma_1\gamma_3}\delta^{\gamma_2\gamma_4}+\delta^{\gamma_1\gamma_4}\delta^{\gamma_2\gamma_3}];\nonumber\\
I_{S_n}^{\gamma_1\gamma_2\gamma_3\gamma_4\gamma_5\gamma_6}&=\frac{1}{n(n+2)(n+4)}area(S_n)[\delta^{\gamma_1\gamma_2}(\delta^{\gamma_3\gamma_4}\delta^{\gamma_5\gamma_6}+\delta^{\gamma_3\gamma_5}\delta^{\gamma_4\gamma_6}+\delta^{\gamma_3\gamma_6}\delta^{\gamma_4\gamma_5})\nonumber\\
&+\delta^{\gamma_1\gamma_3}(\delta^{\gamma_2\gamma_4}\delta^{\gamma_5\gamma_6}+\delta^{\gamma_2\gamma_5}\delta^{\gamma_4\gamma_6}+\delta^{\gamma_2\gamma_6}\delta^{\gamma_4\gamma_5})+\delta^{\gamma_1\gamma_4}(\delta^{\gamma_2\gamma_3}\delta^{\gamma_5\gamma_6}+\delta^{\gamma_2\gamma_5}\delta^{\gamma_3\gamma_6}+\delta^{\gamma_2\gamma_6}\delta^{\gamma_3\gamma_5})\nonumber\\
&+\delta^{\gamma_1\gamma_5}(\delta^{\gamma_2\gamma_3}\delta^{\gamma_4\gamma_6}+\delta^{\gamma_2\gamma_4}\delta^{\gamma_3\gamma_6}+\delta^{\gamma_2\gamma_6}\delta^{\gamma_3\gamma_4})+\delta^{\gamma_1\gamma_6}(\delta^{\gamma_2\gamma_3}\delta^{\gamma_4\gamma_5}+\delta^{\gamma_2\gamma_4}\delta^{\gamma_3\gamma_5}+\delta^{\gamma_2\gamma_5}\delta^{\gamma_3\gamma_4})].
\end{align}
Next, we calculate each term of  $\int_{S^*M}{\rm tr}[\sigma_{-2m}(ABD^{-2m})](x,\xi)$ separately.\\
$\mathbf{(i-1)}$ For $\sigma_0(AB)\sigma_{-2m}(D^{-2m})$:\\
Using the facts:
\begin{align}\label{a1}
&\Gamma^\mu(x_0)=\sigma^\mu(x_0)=\partial^x_\mu g^{\alpha\beta}(x_0)=0,~~~\partial_{x_k}(\Gamma^\mu)(x_0)=g^{\alpha\beta}\partial_{x_k}(\Gamma_{\alpha\beta}^\mu)(x_0)=\frac{2}{3}R_{k\alpha\mu\alpha}(x_0);\nonumber\\
&\partial_{x_k}(\sigma^\mu)(x_0)=\frac{1}{4}\sum_{st=1}^n\partial_{x_k}(<\nabla^L_{\partial_\mu }e_s,e_t>)(x_0)c(e_s)c(e_t)=\frac{1}{8}\sum_{st=1}^nR_{k\mu t s}(x_0)c(e_s)c(e_t).
\end{align}
In normal coordinates, we get the following result.
 \begin{align}
&\sigma_0(AB)\sigma_{-2m}(D^{-2m})(x_0)\nonumber\\
&=\sum_{kr}\partial_{x_k}^2(a^1)\partial_{x_r}^2(a^2)c(da^3)c(da^4)|\xi|^{-2m}+2\sum_{kp}\partial_{x_k}^2(a^1)\partial_{x_p}(a^2)\partial_{x_p}[c(da^3)c(da^4)]|\xi|^{-2m}\nonumber\\
&-\frac{4}{3}\sum_{jr\alpha}\partial_{x_j}(a^1)\partial_{x_r}(a^2)R_{j\alpha r\alpha}(x_0)c(da^3)c(da^4)|\xi|^{-2m}+2\sum_{jr}\partial_{x_j}(a^1)\partial_{x_r}^2(a^2)\partial_{x_j}[c(da^3)c(da^4)]|\xi|^{-2m}\nonumber\\
&+\frac{1}{2}\sum_{jrst}\partial_{x_j}(a^1)\partial_{x_r}(a^2)R_{jrts}(x_0)c(e_s)c(e_t)c(da^3)c(da^4)|\xi|^{-2m}+2\sum_{jr}\partial_{x_j}(a^1)\partial_{x_j}\partial_{x_r}^2(a^2)c(da^3)c(da^4)|\xi|^{-2m}\nonumber\\
&+2\sum_{jr}\partial_{x_j}(a^1)\partial_{x_j}\partial_{x_r}^2(a^2)c(da^3)c(da^4)|\xi|^{-2m}+4\sum_{jp}\partial_{x_j}(a^1)\partial_{x_j}\bigg(\partial_{x_p}(a^2)\partial_{x_p}[c(da^3)c(da^4)]\bigg)|\xi|^{-2m}.
\end{align}
$\mathbf{(i-1-a)}$
\begin{align*}
{\rm tr}\bigg(\sum_{kr}\partial_{x_k}^2(a^1)\partial_{x_r}^2(a^2)c(da^3)c(da^4)|\xi|^{-2m}\bigg)|_{|\xi|=1}&=\sum_{kr}\partial_{x_k}^2(a^1)\partial_{x_r}^2(a^2){\rm tr}[c(da^3)c(da^4)]\nonumber\\
&=-\sum_{kr}\partial_{x_k}^2(a^1)\partial_{x_r}^2(a^2)g(da^3,da^4){\rm tr}[id],\nonumber\\
\end{align*}
then
\begin{align*}
\int_{|\xi|=1}{\rm tr}\bigg(\sum_{kr}\partial_{x_k}^2(a^1)\partial_{x_r}^2(a^2)c(da^3)c(da^4)|\xi|^{-2m}\bigg)\sigma(\xi)&=-\sum_{kr}\partial_{x_k}^2(a^1)\partial_{x_r}^2(a^2)g(da^3,da^4){\rm tr}[id]area(S_n)\nonumber\\
&=-\Delta(a^1)\Delta(a^2)g(da^3,da^4){\rm tr}[id]area(S_n),
\end{align*}
where $\Delta(a^1)$ denotes a generalized laplacian of $a^1$.\\
$\mathbf{(i-1-b)}$
\begin{align*}
{\rm tr}\bigg(2\sum_{kp}\partial_{x_k}^2(a^1)\partial_{x_p}(a^2)\partial_{x_p}[c(da^3)c(da^4)]|\xi|^{-2m}\bigg)|_{|\xi|=1}&=-2\sum_{kp}\partial_{x_k}^2(a^1)\partial_{x_p}(a^2)\partial_{x_p}[g(da^3,da^4)]{\rm tr}[id],
\end{align*}
then
\begin{align*}
\int_{|\xi|=1}{\rm tr}\bigg(2\sum_{kp}\partial_{x_k}^2(a^1)\partial_{x_p}(a^2)\partial_{x_p}[c(da^3)c(da^4)]|\xi|^{-2m}\bigg)\sigma(\xi)&=-2\sum_{kp}\partial_{x_k}^2(a^1)\partial_{x_p}(a^2)\partial_{x_p}[g(da^3,da^4)]{\rm tr}[id]area(S_n)\nonumber\\
&=-2\Delta(a^1)g(\nabla (a^2),\nabla g(da^3,da^4)){\rm tr}[id]area(S_n),
\end{align*}
where $\nabla(a^2)$ denotes the gradient of $a^2$.\\
$\mathbf{(i-1-c)}$
\begin{align*}
{\rm tr}\bigg(-\frac{4}{3}\sum_{jr\alpha}\partial_{x_j}(a^1)\partial_{x_r}(a^2)R_{j\alpha r\alpha}(x_0)c(da^3)c(da^4)|\xi|^{-2m}\bigg)|_{|\xi|=1}&=\frac{4}{3}\sum_{jr\alpha}\partial_{x_j}(a^1)\partial_{x_r}(a^2)R_{j\alpha r\alpha}(x_0)g(da^3,da^4){\rm tr}[id],
\end{align*}
then
\begin{align*}
&\int_{|\xi|=1}{\rm tr}\bigg(-\frac{4}{3}\sum_{jr\alpha}\partial_{x_j}(a^1)\partial_{x_r}(a^2)R_{j\alpha r\alpha}(x_0)c(da^3)c(da^4)|\xi|^{-2m}\bigg)\sigma(\xi)\nonumber\\
&=\frac{4}{3}\sum_{jr\alpha}\partial_{x_j}(a^1)\partial_{x_r}(a^2)R_{j\alpha r\alpha}(x_0)g(da^3,da^4){\rm tr}[id]area(S_n)\nonumber\\
&=\frac{4}{3}\sum_\alpha R(\nabla (a^1),e_\alpha,\nabla (a^2),e_\alpha)g(da^3,da^4){\rm tr}[id]area(S_n).
\end{align*}
$\mathbf{(i-1-d)}$
\begin{align*}
{\rm tr}\bigg(2\sum_{jp}\partial_{x_j}(a^1)\partial_{x_p}^2(a^2)\partial_{x_j}[c(da^3)c(da^4)]|\xi|^{-2m}\bigg)|_{|\xi|=1}&=-2\sum_{jp}\partial_{x_j}(a^1)\partial_{x_p}^2(a^2)\partial_{x_j}[g(da^3,da^4)]{\rm tr}[id],
\end{align*}
then
\begin{align*}
&\int_{|\xi|=1}{\rm tr}\bigg(2\sum_{jp}\partial_{x_j}(a^1)\partial_{x_p}^2(a^2)\partial_{x_j}[c(da^3)c(da^4)]|\xi|^{-2m}\bigg)\sigma(\xi)\nonumber\\
&=-2\sum_{jp}\partial_{x_j}(a^1)\partial_{x_p}^2(a^2)\partial_{x_j}[g(da^3,da^4)]{\rm tr}[id]area(S_n)\nonumber\\
&=-2\Delta(a^2)g(\nabla (a^1),\nabla g(da^3,da^4)){\rm tr}[id]area(S_n),
\end{align*}
$\mathbf{(i-1-e)}$
\begin{align*}
&{\rm tr}\bigg(\sum_{jr}\partial_{x_j}(a^1)\partial_{x_r}(a^2)\sum_{st}R_{jrts}(x_0)c(e_s)c(e_t)c(da^3)c(da^4)|\xi|^{-2m}
\bigg)|_{|\xi|=1}\nonumber\\
&=\sum_{jr}\partial_{x_j}(a^1)\partial_{x_r}(a^2)\sum_{st}R_{jrts}(x_0){\rm tr}[c(e_s)c(e_t)c(da^3)c(da^4)],
\end{align*}
where
\begin{align*}
{\rm tr}[c(e_s)c(e_t)c(da^3)c(da^4)]&=\sum_{\alpha\beta}e_\alpha(a^3)e_\beta(a^4){\rm tr}[c(e_s)c(e_t)c(e_\alpha)c(e_\beta)]\nonumber\\
&=\sum_{\alpha\beta}e_\alpha(a^3)e_\beta(a^4)\bigg(-\delta_{st}{\rm tr}[c(e_\alpha)c(e_\beta)]+\delta_{s\alpha}{\rm tr}[c(e_t)c(e_\beta)]-\delta_{s\beta}{\rm tr}[c(e_t)c(e_\alpha)]\bigg),
\end{align*}
then
\begin{align*}
R_{jrts}(x_0){\rm tr}[c(e_s)c(e_t)c(da^3)c(da^4)]&=-R_{jrts}(x_0)e_s(a^3)e_t(a^4){\rm tr}[id]+R_{jrts}(x_0)e_t(a^3)e_s(a^4){\rm tr}[id]\nonumber\\
&=-2R_{jrts}(x_0)e_s(a^3)e_t(a^4){\rm tr}[id].
\end{align*}
Further,
\begin{align*}
&\int_{|\xi|=1}{\rm tr}\bigg(\frac{1}{2}\sum_{jr}\partial_{x_j}(a^1)\partial_{x_r}(a^2)\sum_{st}R_{jrts}(x_0)c(e_s)c(e_t)c(da^3)c(da^4)|\xi|^{-2m}\bigg)\sigma(\xi)\nonumber\\
&=-\sum_{jrts}\partial_{x_j}(a^1)\partial_{x_r}(a^2)R_{jrts}(x_0)e_s(a^3)e_t(a^4){\rm tr}[id]area(S_n)\nonumber\\
&=-R(\nabla(a^1),\nabla(a^2),\nabla(a^4),\nabla(a^3)){\rm tr}[id]area(S_n).
\end{align*}
$\mathbf{(i-1-f)}$(See Appendix)
\begin{align*}
&\int_{|\xi|=1}{\rm tr}\bigg(2\sum_{jr}\partial_{x_j}(a^1)\partial_{x_j}\partial_{x_r}^2(a^2)c(da^3)c(da^4)|\xi|^{-2m}\bigg)\sigma(\xi)\nonumber\\
&=-2\sum_{jr}\partial_{x_j}(a^1)\partial_{x_j}\partial_{x_r}^2(a^2)g(da^3,da^4){\rm tr}[id]area(S_n)\nonumber\\
&=\Big(2\nabla(a^1)[\Delta(a^2)]-\frac{4}{3}\sum_\alpha R(\nabla (a^1),e_\alpha,\nabla (a^2),e_\alpha)\Big)g(da^3,da^4){\rm tr}[id]area(S_n).
\end{align*}
$\mathbf{(i-1-g)}$(See Appendix)
\begin{align*}
\int_{|\xi|=1}{\rm tr}\bigg[4\sum_{jp}\partial_{x_j}(a^1)\partial_{x_j}\bigg(\partial_{x_p}(a^2)\partial_{x_p}[c(da^3)c(da^4)]\bigg)|\xi|^{-2m}\bigg]\sigma(\xi)=-4\nabla(a^1)\nabla(a^2)[g(da^3,da^4)]{\rm tr}[id]area(S_n).
\end{align*}
Therefore, we get
\begin{align*}
&\int_{|\xi|=1}{\rm tr}\bigg(\sigma_0(AB)\sigma_{-2m}(D^{-2m})(x_0)\bigg)\sigma(\xi)\nonumber\\
&=\bigg(-\Delta(a^1)\Delta(a^2)g(da^3,da^4)-2\Delta(a^1)g(\nabla (a^2),\nabla g(da^3,da^4))-R(\nabla(a^1),\nabla(a^2),\nabla(a^4),\nabla(a^3))\nonumber\\
&+2\nabla(a^1)[\Delta(a^2)]g(da^3,da^4)-4\nabla(a^1)\nabla(a^2)[g(da^3,da^4)] \bigg){\rm tr}[id]area(S_n).
\end{align*}
$\mathbf{(i-2)}$ For $\sigma_1(AB)\sigma_{-2m-1}(D^{-2m})$:

By (\ref{a1}), in normal coordinates, we get
\begin{align}\label{a33}
&\sigma_{-2m-1}(D^{-2m})(x_0)\nonumber\\
&=m|\xi|^{-(2m-2)}\bigg(-i|\xi|^{-4}\xi_k(\Gamma^k-2\sigma^k)(x_0)-2i|\xi|^{-6}\xi^j\xi_\alpha\xi_\beta\partial_{x_j}g^{\alpha\beta}(x_0)\bigg)\nonumber\\
&+2i\sum_{k=0}^{m-2}\sum_{\mu=1}^{2m}(k+1-m)|\xi|^{-2m-4}\xi^\mu\xi_\alpha\xi_\beta\partial^x_\mu g^{\alpha\beta}(x_0)\nonumber\\
&=0,
\end{align}
therefore
\begin{align*}
\int_{|\xi|=1}{\rm tr}\bigg(\sigma_1(AB)\sigma_{-2m-1}(D^{-2m})(x_0)\bigg)\sigma(\xi)=0.
\end{align*}
$\mathbf{(i-3)}$ For $\sigma_2(AB)\sigma_{-2m-2}(D^{-2m})$:
 \begin{align}\label{A1}
&\sigma_2(AB)\sigma_{-2m-2}(D^{-2m})(x_0)\nonumber\\
&=m\sum_{jl}\partial_{x_j}(a^1)\partial_{x_l}(a^2)\xi_j\xi_lc(da^3)c(da^4)|\xi|^{-2m-2}s\nonumber\\
&-\frac{4m(m+1)}{3}\sum_{jl}\partial_{x_j}(a^1)\partial_{x_l}(a^2)\sum_{\mu\nu\alpha}R_{\mu\alpha\nu\alpha}(x_0)\xi_j\xi_l\xi_\mu\xi_\nu c(da^3)c(da^4)|\xi|^{-2m-4}.
\end{align}
$\mathbf{(i-3-a)}$
\begin{align*}
m{\rm tr}\bigg(\sum_{jl}\partial_{x_j}(a^1)\partial_{x_l}(a^2)\xi_j\xi_lc(da^3)c(da^4)|\xi|^{-2m-2}s\bigg)|_{|\xi|=1}&=-m\sum_{jl}\partial_{x_j}(a^1)\partial_{x_l}(a^2)\xi_j\xi_lg(da^3,da^4)s{\rm tr}[id],
\end{align*}
then
\begin{align*}
&\int_{|\xi|=1}m{\rm tr}\bigg(\sum_{jl}\partial_{x_j}(a^1)\partial_{x_l}(a^2)\xi_j\xi_lc(da^3)c(da^4)|\xi|^{-2m-2}s\bigg)\sigma(\xi)\nonumber\\
&=-m\sum_{jl}\partial_{x_j}(a^1)\partial_{x_l}(a^2)g(da^3,da^4)s{\rm tr}[id]\int_{|\xi|=1}\xi_j\xi_l\sigma(\xi)\nonumber\\
&=-m\sum_{jl}\partial_{x_j}(a^1)\partial_{x_l}(a^2)g(da^3,da^4)s{\rm tr}[id]I^{jl}_{S_n}\nonumber\\
&=-\frac{1}{2}\sum_{j}\partial_{x_j}(a^1)\partial_{x_j}(a^2)area(S_n)g(da^3,da^4)s{\rm tr}[id]\nonumber\\
&=-\frac{1}{2}g(\nabla(a^1),\nabla(a^2))area(S_n)g(da^3,da^4)s{\rm tr}[id].
\end{align*}
$\mathbf{(i-3-b)}$
\begin{align*}
&{\rm tr}\bigg(-\frac{4m(m+1)}{3}\sum_{jl}\partial_{x_j}(a^1)\partial_{x_l}(a^2)\sum_{\mu\nu\alpha}R_{\mu\alpha\nu\alpha}(x_0)\xi_j\xi_l\xi_\mu\xi_\nu c(da^3)c(da^4)|\xi|^{-2m-4}\bigg)|_{|\xi|=1}\nonumber\\
&=\frac{4m(m+1)}{3}\sum_{jl}\partial_{x_j}(a^1)\partial_{x_l}(a^2)\sum_{\mu\nu\alpha}R_{\mu\alpha\nu\alpha}(x_0)\xi_j\xi_l\xi_\mu\xi_\nu g(da^3,da^4){\rm tr}[id],
\end{align*}
then
\begin{align*}
&\int_{|\xi|=1}-\frac{4m(m+1)}{3}{\rm tr}\bigg(\sum_{jl}\partial_{x_j}(a^1)\partial_{x_l}(a^2)\sum_{\mu\nu\alpha}R_{\mu\alpha\nu\alpha}(x_0)\xi_j\xi_l\xi_\mu\xi_\nu c(da^3)c(da^4)|\xi|^{-2m-4}\bigg)\sigma(\xi)\nonumber\\
&=\frac{4m(m+1)}{3}\sum_{jl}\partial_{x_j}(a^1)\partial_{x_l}(a^2)\sum_{\mu\nu\alpha}R_{\mu\alpha\nu\alpha}(x_0)g(da^3,da^4){\rm tr}[id]\int_{|\xi|=1}\xi_j\xi_l\xi_\mu\xi_\nu \sigma(\xi)\nonumber\\
&=\frac{4m(m+1)}{3}\sum_{jl}\partial_{x_j}(a^1)\partial_{x_l}(a^2)\sum_{\mu\nu\alpha}R_{\mu\alpha\nu\alpha}(x_0)g(da^3,da^4){\rm tr}[id]I_{S_n}^{jl\mu\nu}\nonumber\\
&=\frac{1}{3}\bigg(\sum_{j\mu\alpha}\partial_{x_j}(a^1)\partial_{x_j}(a^2)R_{\mu\alpha\mu\alpha}(x_0)+2\sum_{jl \alpha}\partial_{x_j}(a^1)\partial_{x_l}(a^2)R_{j\alpha l\alpha}(x_0)\bigg)area(S_n)g(da^3,da^4){\rm tr}[id]\nonumber\\
&=\frac{1}{3}\bigg(\sum_{\mu\alpha}g(\nabla(a^1),\nabla(a^2))R_{\mu\alpha\mu\alpha}(x_0)+2\sum_{\alpha }R(\nabla(a^1),e_\alpha,\nabla(a^2),e_\alpha)\bigg)area(S_n)g(da^3,da^4){\rm tr}[id]\nonumber\\
&=\frac{1}{3}\bigg(g(\nabla(a^1),\nabla(a^2))s+2\sum_{\alpha }R(\nabla(a^1),e_\alpha,\nabla(a^2),e_\alpha)\bigg)area(S_n)g(da^3,da^4){\rm tr}[id].
\end{align*}
Therefore, we get
\begin{align*}
&\int_{|\xi|=1}{\rm tr}\bigg(\sigma_2(AB)\sigma_{-2m-2}(D^{-2m})(x_0)\bigg)\sigma(\xi)\nonumber\\
&=\bigg(-\frac{1}{6}g(\nabla(a^1),\nabla(a^2))s+\frac{2}{3}\sum_{\alpha }R(\nabla(a^1),e_\alpha,\nabla(a^2),e_\alpha)\bigg)area(S_n)g(da^3,da^4){\rm tr}[id].
\end{align*}
$\mathbf{(i-4)}$ For $-i\sum_{j=1}^n\partial_{\xi_j}[\sigma_2(AB)]\partial_{x_j}[\sigma_{-2m-1}(D^{-2m})]$:
 \begin{align*}
&-i\sum_{j=1}^n\partial_{\xi_j}[\sigma_2(AB)]\partial_{x_j}[\sigma_{-2m-1}(D^{-2m})](x_0)\nonumber\\
&=\frac{8m}{3}\sum_{jplk\alpha}\partial_{x_p}(a^1)\partial_{x_l}(a^2)[\partial_{\xi_j}(\xi_p)\xi_l\xi_k+\partial_{\xi_j}(\xi_l)\xi_p\xi_k]R_{j\alpha k\alpha}(x_0)c(da^3)c(da^4)|\xi|^{-2m-2}\nonumber\\
&-m\sum_{jplkts}\partial_{x_p}(a^1)\partial_{x_l}(a^2)[\partial_{\xi_j}(\xi_p)\xi_l\xi_k+\partial_{\xi_j}(\xi_l)\xi_p\xi_k]R_{jkts}(x_0)c(da^3)c(da^4)c(e_s)c(e_t)|\xi|^{-2m-2}\nonumber\\
&+\frac{16}{3}m\sum_{jplr\alpha\beta}\partial_{x_p}(a^1)\partial_{x_l}(a^2)R_{\alpha j\beta r}(x_0)[\partial_{\xi_j}(\xi_p)\xi_l\xi_r\xi_\alpha\xi_\beta+\partial_{\xi_j}(\xi_l)\xi_p\xi_r\xi_\alpha\xi_\beta]c(da^3)c(da^4)|\xi|^{-2m-4}\nonumber\\
&+\frac{16(m-k-1)}{3}\sum_{k=0}^{m-2}\sum_{\mu=1}^{2m}\sum_{jplr\alpha\beta}\partial_{x_p}(a^1)\partial_{x_l}(a^2)R_{\alpha j\beta \mu}(x_0)[\partial_{\xi_j}(\xi_p)\xi_l\xi_\mu\xi_\alpha\xi_\beta+\partial_{\xi_j}(\xi_l)\xi_p\xi_\mu\xi_\alpha\xi_\beta]c(da^3)c(da^4)|\xi|^{-2m-4}.
\end{align*}
$\mathbf{(i-4-a)}$
\begin{align*}
&{\rm tr}\bigg(\frac{8}{3}m\sum_{jplk\alpha}\partial_{x_p}(a^1)\partial_{x_l}(a^2)[\partial_{\xi_j}(\xi_p)\xi_l\xi_k+\partial_{\xi_j}(\xi_l)\xi_p\xi_k]R_{j\alpha k\alpha}(x_0)c(da^3)c(da^4)|\xi|^{-2m-2}\bigg)|_{|\xi|=1}\nonumber\\
&=-\frac{8}{3}m\sum_{jplk\alpha}\partial_{x_p}(a^1)\partial_{x_l}(a^2)[\partial_{\xi_j}(\xi_p)\xi_l\xi_k+\partial_{\xi_j}(\xi_l)\xi_p\xi_k]R_{j\alpha k\alpha}(x_0)g(da^3,da^4){\rm tr}[id],
\end{align*}
then
\begin{align*}
&\int_{|\xi|=1}{\rm tr}\bigg(\frac{8}{3}m\sum_{jplk\alpha}\partial_{x_p}(a^1)\partial_{x_l}(a^2)[\partial_{\xi_j}(\xi_p)\xi_l\xi_k+\partial_{\xi_j}(\xi_l)\xi_p\xi_k]R_{j\alpha k\alpha}(x_0)c(da^3)c(da^4)|\xi|^{-2m-2}\bigg)\sigma(\xi)\nonumber\\
&=-\frac{8}{3}m\sum_{jplk\alpha}\partial_{x_p}(a^1)\partial_{x_l}(a^2)R_{j\alpha k\alpha}(x_0)g(a^3,a^4){\rm tr}[id]\int_{|\xi|=1}[\partial_{\xi_j}(\xi_p)\xi_l\xi_k+\partial_{\xi_j}(\xi_l)\xi_p\xi_k]\sigma(\xi)\nonumber\\
&=-\frac{8}{3}\sum_{plk\alpha}\partial_{x_p}(a^1)\partial_{x_l}(a^2)R_{p\alpha k\alpha}(x_0) area(S_n)g(da^3,da^4){\rm tr}[id]\nonumber\\
&=-\frac{8}{3}\sum_{pk\alpha}\partial_{x_p}(a^1)\partial_{x_k}(a^2)R_{p\alpha k\alpha}(x_0) area(S_n)g(da^3,da^4){\rm tr}[id]\nonumber\\
&=-\frac{8}{3}\sum_{\alpha}R(\nabla(a^1),e_\alpha,\nabla(a^2),e_\alpha) area(S_n)g(da^3,da^4){\rm tr}[id].
\end{align*}
$\mathbf{(i-4-b)}$
\begin{align*}
&{\rm tr}\bigg(-m\sum_{jplkts}\partial_{x_p}(a^1)\partial_{x_l}(a^2)[\partial_{\xi_j}(\xi_p)\xi_l\xi_k+\partial_{\xi_j}(\xi_l)\xi_p\xi_k]R_{jkts}(x_0)c(da^3)c(da^4)c(e_s)c(e_t)|\xi|^{-2m-2}\bigg)|_{|\xi|=1}\nonumber\\
&=2m\sum_{jplkts}\partial_{x_p}(a^1)\partial_{x_l}(a^2)[\partial_{\xi_j}(\xi_p)\xi_l\xi_k+\partial_{\xi_j}(\xi_l)\xi_p\xi_k]R_{jkts}(x_0)e_s(da^3)e_t(da^4){\rm tr}[id],
\end{align*}
then
\begin{align*}
&\int_{|\xi|=1}{\rm tr}\bigg(-m\sum_{jplkts}\partial_{x_p}(a^1)\partial_{x_l}(a^2)[\partial_{\xi_j}(\xi_p)\xi_l\xi_k+\partial_{\xi_j}(\xi_l)\xi_p\xi_k]R_{jkts}(x_0)c(da^3)c(da^4)c(e_s)c(e_t)|\xi|^{-2m-2}\bigg)\sigma(\xi)\nonumber\\
&=area(S^n)\sum_{jlts}\partial_{x_j}(a^1)\partial_{x_l}(a^2)R_{jlts}(x_0)e_s(a^3)e_t(a^4){\rm tr}[id]\nonumber\\
&+area(S^n)\sum_{jpts}\partial_{x_p}(a^1)\partial_{x_j}(a^2)R_{jpts}(x_0)e_s(a^3)e_t(a^4){\rm tr}[id]\nonumber\\
&=area(S^n)\Big(R(\nabla(a^1),\nabla(a^2),\nabla(a^4),\nabla(a^3))+R(\nabla(a^2),\nabla(a^1),\nabla(a^4),\nabla(a^3))\Big){\rm tr}[id]\nonumber\\
&=0.
\end{align*}
$\mathbf{(i-4-c)}$
\begin{align*}
&{\rm tr}\bigg(\sum_{jplr\alpha\beta}\partial_{x_p}(a^1)\partial_{x_l}(a^2)R_{\alpha j\beta r}(x_0)[\partial_{\xi_j}(\xi_p)\xi_l\xi_r\xi_\alpha\xi_\beta+\partial_{\xi_j}(\xi_l)\xi_p\xi_r\xi_\alpha\xi_\beta]c(da^3)c(da^4)|\xi|^{-2m-4}\bigg)|_{|\xi|=1}\nonumber\\
&=-\sum_{jplr\alpha\beta}\partial_{x_p}(a^1)\partial_{x_l}(a^2)R_{\alpha j\beta r}(x_0)[\partial_{\xi_j}(\xi_p)\xi_l\xi_r\xi_\alpha\xi_\beta+\partial_{\xi_j}(\xi_l)\xi_p\xi_r\xi_\alpha\xi_\beta]c(da^3)c(da^4)|\xi|^{-2m-4}g(da^3,da^4){\rm tr}[id],
\end{align*}
then
\begin{align*}
&\int_{|\xi|=1}{\rm tr}\bigg(\sum_{jplr\alpha\beta}\partial_{x_p}(a^1)\partial_{x_l}(a^2)R_{\alpha j\beta r}(x_0)[\partial_{\xi_j}(\xi_p)\xi_l\xi_r\xi_\alpha\xi_\beta+\partial_{\xi_j}(\xi_l)\xi_p\xi_r\xi_\alpha\xi_\beta]c(da^3)c(da^4)|\xi|^{-2m-4}\bigg)\sigma(\xi)\nonumber\\
&=-\sum_{jplr\alpha\beta}\partial_{x_p}(a^1)\partial_{x_l}(a^2)R_{\alpha j\beta r}(x_0)g(da^3,da^4){\rm tr}[id]\int_{|\xi|=1}[\partial_{\xi_j}(\xi_p)\xi_l\xi_r\xi_\alpha\xi_\beta+\partial_{\xi_j}(\xi_l)\xi_p\xi_r\xi_\alpha\xi_\beta]\sigma(\xi)\nonumber\\
&=-\bigg(\sum_{jlr\alpha\beta}\partial_{x_j}(a^1)\partial_{x_l}(a^2)R_{\alpha j\beta r}(x_0)I_{S_n}^{lr\alpha\beta}+\sum_{jr\alpha\beta}\partial_{x_p}(a^1)\partial_{x_j}(a^2)R_{\alpha j\beta r}(x_0)I_{S_n}^{pr\alpha\beta}\bigg)g(da^3,da^4){\rm tr}[id]\nonumber\\
&=0.
\end{align*}
$\mathbf{(i-4-d)}$\\
Similarly, we get
\begin{align*}
&\int_{|\xi|=1}{\rm tr}[\sum_{jplr\alpha\beta}\partial_{x_p}(a^1)\partial_{x_l}(a^2)R_{\alpha j\beta \mu}(x_0)[\partial_{\xi_j}(\xi_p)\xi_l\xi_\mu\xi_\alpha\xi_\beta+\partial_{\xi_j}(\xi_l)\xi_p\xi_\mu\xi_\alpha\xi_\beta]c(da^3)c(da^4)|\xi|^{-2m-4}]\sigma(\xi)=0.
\end{align*}
Therefore, we get
\begin{align*}
&\int_{|\xi|=1}{\rm tr}\bigg(-i\sum_{j=1}^n\partial_{\xi_j}[\sigma_2(AB)]\partial_{x_j}[\sigma_{-2m-1}(D^{-2m})](x_0)\bigg)\sigma(\xi)\nonumber\\
&=-\frac{8}{3}\sum_{\alpha}R(\nabla(a^1),e_\alpha,\nabla(a^2),e_\alpha) area(S_n)g(da^3,da^4){\rm tr}[id].
\end{align*}
$\mathbf{(i-5)}$ For $-i\Sigma_{j=1}^n\partial_{\xi_j}[\sigma_1(AB)]\partial_{x_j}[\sigma_{-2m}(D^{-2m})]$:
\begin{align}\label{a44}
\partial_{x_j}(|\xi|^{-2m})(x_0)\sigma(\xi)=0,
\end{align}
then
\begin{align*}
\int_{|\xi|=1}{\rm tr}\bigg(-i\Sigma_{j=1}^n\partial_{\xi_j}[\sigma_1(AB)]\partial_{x_j}[\sigma_{-2m}(D^{-2m})]\bigg)(x_0)\sigma(\xi)=0.
\end{align*}
$\mathbf{(i-6)}$ For $-\frac{1}{2}\sum_{jl}\partial_{\xi_j}\partial_{\xi_l}[\sigma_2(AB)]\partial_{x_j}\partial_{x_l}[\sigma_{-2m}(D^{-2m})]$:
 \begin{align*}
&-\frac{1}{2}\sum_{jl}\partial_{\xi_j}\partial_{\xi_l}[\sigma_2(AB)]\partial_{x_j}\partial_{x_l}[\sigma_{-2m}(D^{-2m})](x_0)\nonumber\\
&=-\frac{4}{3}m\sum_{jlrs\alpha\beta}\partial_{x_r}(a^1)\partial_{x_s}(a^2)R_{\alpha j\beta l}(x_0)[\partial_{\xi_l}(\xi_r)\partial_{\xi_j}(\xi_s)+\partial_{\xi_l}(\xi_s)\partial_{\xi_j}(\xi_r)]\xi_\alpha\xi_\beta c(da^3)c(da^4)|\xi|^{-2m-2},
\end{align*}
so
\begin{align*}
&{\rm tr}\bigg(-\frac{4}{3}m\sum_{jlrs\alpha\beta}\partial_{x_r}(a^1)\partial_{x_s}(a^2)R_{\alpha j\beta l}(x_0)[\partial_{\xi_l}(\xi_r)\partial_{\xi_j}(\xi_s)+\partial_{\xi_l}(\xi_s)\partial_{\xi_j}(\xi_r)]\xi_\alpha\xi_\beta c(da^3)c(da^4)|\xi|^{-2m-2}\bigg)|_{|\xi|=1}\nonumber\\
&=\frac{4}{3}m\sum_{jlrs\alpha\beta}\partial_{x_r}(a^1)\partial_{x_s}(a^2)R_{\alpha j\beta l}(x_0)[\partial_{\xi_l}(\xi_r)\partial_{\xi_j}(\xi_s)+\partial_{\xi_l}(\xi_s)\partial_{\xi_j}(\xi_r)]\xi_\alpha\xi_\beta g(da^3,da^4){\rm tr}[id],
\end{align*}
then
\begin{align*}
&\int_{|\xi|=1}-\frac{4}{3}m{\rm tr}\bigg(\sum_{jlrs\alpha\beta}\partial_{x_r}(a^1)\partial_{x_s}(a^2)R_{\alpha j\beta l}(x_0)[\partial_{\xi_l}(\xi_r)\partial_{\xi_j}(\xi_s)+\partial_{\xi_l}(\xi_s)\partial_{\xi_j}(\xi_r)]\xi_\alpha\xi_\beta c(da^3)c(da^4)|\xi|^{-2m-2}\bigg)\sigma(\xi)\nonumber\\
&=\frac{4}{3}m\sum_{jlrs\alpha\beta}\partial_{x_r}(a^1)\partial_{x_s}(a^2)[\partial_{\xi_l}(\xi_r)\partial_{\xi_j}(\xi_s)+\partial_{\xi_l}(\xi_s)\partial_{\xi_j}(\xi_r)]R_{\alpha j\beta l}(x_0) g(da^3,da^4){\rm tr}[id]\int_{|\xi|=1}\xi_\alpha\xi_\beta\sigma(\xi)\nonumber\\
&=\frac{4}{3}m\sum_{jlrs\alpha\beta}\partial_{x_r}(a^1)\partial_{x_s}(a^2)[\partial_{\xi_l}(\xi_r)\partial_{\xi_j}(\xi_s)+\partial_{\xi_l}(\xi_s)\partial_{\xi_j}(\xi_r)]R_{\alpha j\beta l}(x_0) g(da^3,da^4){\rm tr}[id]I_{S_n}^{\alpha\beta}\nonumber\\
&=\frac{4}{3}\sum_{jl\alpha}\partial_{x_j}(a^1)\partial_{x_l}(a^2)R_{\alpha j\alpha l}(x_0)area(S_n)g(da^3,da^4){\rm tr}[id]\nonumber\\
&=\frac{4}{3}\sum_{\alpha}R(\nabla(a^1),e_\alpha,\nabla(a^2),e_\alpha)area(S_n)g(da^3,da^4){\rm tr}[id].
\end{align*}
Therefore, we get
\begin{align*}
&\int_{|\xi|=1}{\rm tr}\bigg(-\frac{1}{2}\sum_{jl}\partial_{\xi_j}\partial_{\xi_l}[\sigma_2(AB)]\partial_{x_j}\partial_{x_l}[\sigma_{-2m}(D^{-2m})](x_0)\bigg)\sigma(\xi)\nonumber\\
&=\frac{4}{3}\sum_{\alpha}R(\nabla(a^1),e_\alpha,\nabla(a^2),e_\alpha)area(S_n)g(da^3,da^4){\rm tr}[id].
\end{align*}
Finally, we obtain
\begin{thm} Let $P=[D^2,a^1][D^2,a^2][D,a^3][D,a^4],$ we have the following noncommutative integral of $P$ based on the Wodzicki residue
\begin{align*}
&\int\hspace{-1.05em}-a^0[D^2,a^1][D^2,a^2][D,a^3][D,a^4]D^{-2m}\nonumber\\
&=2^m\frac{2\pi^{\frac{n}{2}}}{\Gamma(\frac{n}{2})}\int_Ma_0\bigg(-\Delta(a^1)\Delta(a^2)g(da^3,da^4)-2\Delta(a^1)g(\nabla (a^2),\nabla g(da^3,da^4))-R(\nabla(a^1),\nabla(a^2),\nabla(a^4),\nabla(a^3))\nonumber\\
&+2\nabla(a^1)[\Delta(a^2)]g(da^3,da^4)-4\nabla(a^1)\nabla(a^2)[g(da^3,da^4)]-\frac{1}{6}g(\nabla(a^1),\nabla(a^2))g(da^3,da^4)s\nonumber\\
&-\frac{2}{3}\sum_{\alpha}R(\nabla(a^1),e_\alpha,\nabla(a^2),e_\alpha)g(da^3,da^4)
\bigg)d{\rm Vol_M}.
\end{align*}
\end{thm}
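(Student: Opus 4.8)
\quad The plan is to evaluate the noncommutative integral through its defining formula $(\ref{666})$ with $n=2m$, that is, to compute $\int_{S^*M}{\rm tr}[\sigma_{-2m}(a^0PD^{-2m})](x,\xi)$ for $P=[D^2,a^1][D^2,a^2][D,a^3][D,a^4]$. Since $a^0$ acts by multiplication we have $\partial_\xi a^0=0$, so in the symbol-composition formula for $a^0\cdot(PD^{-2m})$ every correction term carrying a $\xi$-derivative of $a^0$ drops out; hence $\sigma_{-2m}(a^0PD^{-2m})=a^0\,\sigma_{-2m}(PD^{-2m})$ and $a^0$ is simply carried along as a scalar factor. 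Writing $P=AB$ with $A=[D^2,a^1]$ and $B=[D^2,a^2][D,a^3][D,a^4]$, the product algorithm expresses $\sigma_{-2m}(ABD^{-2m})$ as the sum of the six pieces (i-1)--(i-6) displayed above, so the task reduces to computing six integrals over $S^*M$ and adding them.

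First I assemble the ingredient symbols: from Lemma \ref{lemma1} and Lemma \ref{lema2} one gets $\sigma_0(A),\sigma_1(A),\sigma_0(B),\sigma_1(B)$ — note $[D,a^i]=ic(da^i)$ has order $0$, so $B$ has order $2$ — and composing once more yields $\sigma_0(AB),\sigma_1(AB)$ and $\sigma_2(AB)=\sigma_1(A)\sigma_1(B)$. For the inverse Dirac powers I invoke Lemma \ref{lemkkk}, the $n$-dimensional analogue of the Kalau--Walze symbol formulas, for $\sigma_{-2m}(D^{-2m})$, $\sigma_{-2m-1}(D^{-2m})$ and $\sigma_{-2m-2}(D^{-2m})$. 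Substituting these into (i-1)--(i-6) produces six explicit integrands.

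The core of the argument is the pointwise evaluation at an arbitrary $x_0\in M$ in geodesic normal coordinates centred there, using the identities $(\ref{a1})$: $\Gamma^\mu(x_0)=\sigma^\mu(x_0)=\partial_\mu g^{\alpha\beta}(x_0)=0$, $\partial_k\Gamma^\mu(x_0)=\frac{2}{3}R_{k\alpha\mu\alpha}(x_0)$ and $\partial_k\sigma^\mu(x_0)=\frac{1}{8}R_{k\mu ts}(x_0)c(e_s)c(e_t)$. These kill term (i-2), because $\sigma_{-2m-1}(D^{-2m})(x_0)=0$ by $(\ref{a33})$, and term (i-5), because $\partial_{x_j}(|\xi|^{-2m})(x_0)=0$ by $(\ref{a44})$. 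For the surviving terms (i-1),(i-3),(i-4),(i-6) one writes $c(da^j)=\sum_\mu\partial_\mu(a^j)c(dx^\mu)$, takes the spinor trace via ${\rm tr}[c(e_i)c(e_j)]=-\delta_{ij}{\rm tr}[id]$ and ${\rm tr}[c(e_a)c(e_b)c(e_c)c(e_d)]=(\delta_{ab}\delta_{cd}-\delta_{ac}\delta_{bd}+\delta_{ad}\delta_{bc}){\rm tr}[id]$, and integrates the resulting $\xi$-monomials over $\{|\xi|=1\}$ with the formulas for $I_{S_n}^{\gamma_1\cdots\gamma_{2k}}$. Two subcomputations inside (i-1) — the ones labelled (i-1-f) and (i-1-g), coming from the third-derivative summands of $\sigma_0(AB)$ — require rewriting sums of the form $\sum_j\partial_{x_j}(a^1)\,\partial_{x_j}\partial_{x_r}^2(a^2)$ in terms of $\nabla(a^1)[\Delta(a^2)]$ plus curvature corrections; these I would defer to the Appendix.

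It remains to add the four non-zero contributions. From (i-1) come the Laplacian and gradient terms $-\Delta(a^1)\Delta(a^2)g(da^3,da^4)-2\Delta(a^1)g(\nabla(a^2),\nabla g(da^3,da^4))-R(\nabla(a^1),\nabla(a^2),\nabla(a^4),\nabla(a^3))+2\nabla(a^1)[\Delta(a^2)]g(da^3,da^4)-4\nabla(a^1)\nabla(a^2)[g(da^3,da^4)]$, the curvature trace $\frac{4}{3}\sum_\alpha R(\nabla(a^1),e_\alpha,\nabla(a^2),e_\alpha)$ from (i-1-c) cancelling the one from (i-1-f); the remaining curvature trace $\sum_\alpha R(\nabla(a^1),e_\alpha,\nabla(a^2),e_\alpha)$ picks up coefficients $+\frac{2}{3}$ from (i-3), $-\frac{8}{3}$ from (i-4) and $+\frac{4}{3}$ from (i-6), summing to $-\frac{2}{3}$; and (i-3) also supplies $-\frac{1}{6}g(\nabla(a^1),\nabla(a^2))g(da^3,da^4)s$. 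Multiplying by the scalar $a^0$ and factoring the constants ${\rm tr}[id]=2^m$ (the rank of the spinor bundle on a $2m$-manifold) and $area(S_n)=\frac{2\pi^{n/2}}{\Gamma(n/2)}$ out of the remaining integral $\int_M a^0(\cdots)\,d{\rm Vol_M}$ gives the asserted formula. The main obstacle is purely combinatorial: controlling the many summands of $\sigma_0(AB)$ that feed into (i-1) and verifying that the curvature terms generated by (i-1), (i-3), (i-4) and (i-6) collapse exactly as above.
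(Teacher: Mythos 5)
Your proposal is correct and follows essentially the same route as the paper: the same splitting $A=[D^2,a^1]$, $B=[D^2,a^2][D,a^3][D,a^4]$, the same six-term symbol expansion (i-1)--(i-6) with (i-2) and (i-5) vanishing in normal coordinates, and the same bookkeeping of the curvature-trace coefficients $\frac{2}{3}-\frac{8}{3}+\frac{4}{3}=-\frac{2}{3}$ together with the cancellation between (i-1-c) and (i-1-f). The constants ${\rm tr}[id]=2^m$ and $area(S_n)=\frac{2\pi^{n/2}}{\Gamma(n/2)}$ are accounted for exactly as in the paper.
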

$\mathbf{Part~~ii)}$ $P_1=[D,a^1][D^2,a^2][D^2,a^3][D,a^4]$, $[D,a^1][D^2,a^2]:=A_1,~[D^2,a^3][D,a^4]:=B_1$.\\
Similarly, by Lemma 3.1 in \cite{UW} and Lemma \ref{lemma1}, we get the following lemma.
\begin{lem}The symbols of $A_1$ and $B_1$ are given
\begin{align}
&\sigma_0(A_1)=c(da^1)\bigg(\sum_{j=1}^n\partial_{x_j}(a^2)(\Gamma^j-2\sigma^j)(x)-\sum_{jl=1}^n\partial_{x_j}\partial_{x_l}(a^2)g^{jl}\bigg)\nonumber;\\
&\sigma_1(A_1)=-2ic(da^1)\sum_{jl=1}^n\partial_{x_j}(a^2)g^{jl}\xi_l\nonumber;\\
&\sigma_0(B_1)=\bigg(\sum_{j=1}^n\partial_{x_j}(a^3)(\Gamma^j-2\sigma^j)(x)-\sum_{jl=1}^n\partial_{x_j}\partial_{x_l}(a^3)g^{jl}\bigg)c(da^4)-2\sum_{jl=1}^n\partial_{x_l}(a^3)g^{jl}\partial_{x_j}[c(da^4)]\nonumber;\\
&\sigma_1(B_1)=-2i\sum_{jl=1}^n\partial_{x_j}(a^3)g^{jl}\xi_lc(da^4).
\end{align}
\end{lem}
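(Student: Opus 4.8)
The plan is to read off all four symbols from two inputs only: the commutator symbol calculus of Lemma~\ref{lema2} and the composition rule for pseudo-differential operators, $\sigma(PQ)\sim\sum_{\alpha}\frac{(-i)^{|\alpha|}}{\alpha!}\partial_\xi^\alpha[\sigma(P)]\partial_x^\alpha[\sigma(Q)]$, used just above. I would treat $A_1$ and $B_1$ as products of a first-order commutator $[D^2,\cdot]$ and a zeroth-order commutator $[D,\cdot]$, so that the only genuinely new ingredient is the symbol of $[D,a]$ for a function $a$.

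To get that ingredient I would apply Lemma~\ref{lema2} with $S=D$ (order $k=1$) and $f=a$. Because $[D,a]$ has order $0$, only the piece $\sigma_0[D,a]=\sum_{|\beta|=1}\frac{D_x^\beta(a)}{\beta!}\,\partial_\xi^\beta\big(\sigma_1(D)\big)$ survives. With the principal symbol $\sigma_1(D)=i\gamma^\mu\xi_\mu$ — the normalization forced by $\sigma_2(D^2)=|\xi|^2$ in Lemma~\ref{lemma1}, since $(i\gamma^\mu\xi_\mu)^2=|\xi|^2$ — and with $D_{x_j}=-i\partial_{x_j}$, this collapses to $\sum_j\partial_{x_j}(a)\gamma^j=c(da)$. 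Hence $[D,a]$ is simply the zeroth-order \emph{multiplication} operator $c(da)$, whose symbol is independent of $\xi$; this fact governs the rest. For $[D^2,a]$ nothing new is needed: I would reuse the computation already done for $A,B$ in Part~i), giving $\sigma_1[D^2,a]=-2i\sum_{jl}\partial_{x_j}(a)g^{jl}\xi_l$ and $\sigma_0[D^2,a]=\sum_j\partial_{x_j}(a)(\Gamma^j-2\sigma^j)-\sum_{jl}\partial_{x_j}\partial_{x_l}(a)g^{jl}$.

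For $A_1=[D,a^1][D^2,a^2]$ the left factor carries the $\xi$-independent symbol $c(da^1)$, so $\partial_\xi^\alpha[\sigma([D,a^1])]=0$ for all $\alpha\neq 0$ and the composition degenerates to the pointwise product $\sigma(A_1)=c(da^1)\,\sigma([D^2,a^2])$. Extracting its order-$1$ and order-$0$ parts yields $\sigma_1(A_1)$ and $\sigma_0(A_1)$ exactly as stated, with the Clifford factor automatically sitting on the left. For $B_1=[D^2,a^3][D,a^4]$ the roles are reversed: the $\xi$-independent factor $c(da^4)$ is now on the right, but it is $x$-dependent, so the $|\alpha|=1$ composition term $(-i)\sum_j\partial_{\xi_j}[\sigma_1([D^2,a^3])]\,\partial_{x_j}[c(da^4)]$ no longer drops out. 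Using $\partial_{\xi_j}[\sigma_1([D^2,a^3])]=-2i\sum_k\partial_{x_k}(a^3)g^{kj}$ and $(-i)(-2i)=-2$ produces precisely the extra summand $-2\sum_{jl}\partial_{x_l}(a^3)g^{jl}\partial_{x_j}[c(da^4)]$, while the $\alpha=0$ term contributes $\sigma_0([D^2,a^3])c(da^4)$ for $\sigma_0(B_1)$ and $\sigma_1([D^2,a^3])c(da^4)$ for $\sigma_1(B_1)$.

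The derivation is essentially bookkeeping, and I expect no deep obstacle; the single point that demands care — and where a stray factor or sign is most likely to appear — is exactly the left/right asymmetry between $A_1$ and $B_1$. For $A_1$ the $\xi$-independent multiplication operator stands on the left and annihilates every composition correction, whereas for $B_1$ it stands on the right and, being $x$-dependent, feeds a genuine first-order term into $\sigma_0(B_1)$. Keeping the conventions $\sigma_1(D)=i\gamma^\mu\xi_\mu$ and $D_x=-i\partial_x$ consistent throughout is what makes the coefficients $-2i$ and $-2$ emerge as written.
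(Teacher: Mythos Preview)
Your proposal is correct and follows essentially the same approach as the paper, which simply cites Lemma~\ref{lema2} and Lemma~\ref{lemma1} without spelling out the details. Your elaboration---computing $\sigma_0[D,a]=c(da)$ via Lemma~\ref{lema2}, then exploiting the $\xi$-independence of this factor to collapse the composition formula for $A_1$ and to isolate the single surviving $|\alpha|=1$ correction in $\sigma_0(B_1)$---is exactly what the paper's one-line justification unpacks to.
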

Next, we calculate each term of  $\int_{S^*M}{\rm tr}[\sigma_{-2m}(A_1B_1D^{-2m})](x,\xi)$ separately.\\
$\mathbf{(ii-1)}$ For $\sigma_0(A_1B_1)\sigma_{-2m}(D^{-2m})$:
 \begin{align}
&\sigma_0(A_1B_1)\sigma_{-2m}(D^{-2m})(x_0)\nonumber\\
&=\sum_{kr}\partial_{x_k}^2(a^2)\partial_{x_r}^2(a^3)c(da^1)c(da^4)|\xi|^{-2m}+2\sum_{kp}\partial_{x_k}^2(a^2)\partial_{x_p}(a^3)c(da^1)\partial_{x_p}[c(da^4)]|\xi|^{-2m}\nonumber\\
&-\frac{4}{3}\sum_{jr\alpha}\partial_{x_j}(a^2)\partial_{x_r}(a^3)R_{j\alpha r\alpha}(x_0)c(da^1)c(da^4)|\xi|^{-2m}+2\sum_{jp}\partial_{x_j}(a^2)\partial_{x_p}^2(a^3)c(da^1)\partial_{x_j}[c(da^4)]|\xi|^{-2m}\nonumber\\
&+\frac{1}{2}\sum_{jrst}\partial_{x_j}(a^2)\partial_{x_r}(a^3)R_{jrts}(x_0)c(e_s)c(e_t)c(da^1)c(da^4)|\xi|^{-2m}+2\sum_{jr}\partial_{x_j}(a^2)\partial_{x_j}\partial_{x_r}^2(a^3)c(da^1)c(da^4)|\xi|^{-2m}\nonumber\\
&+4\sum_{jp}\partial_{x_j}(a^2)\partial_{x_p}(a^3)c(da^1)\partial_{x_j}\partial_{x_p}[c(da^4)]|\xi|^{-2m}++4\sum_{jp}\partial_{x_j}(a^2)\partial_{x_j}\partial_{x_p}(a^3)c(da^1)\partial_{x_p}[c(da^4)]|\xi|^{-2m}.
\end{align}
$\mathbf{(ii-1-a)}$
\begin{align*}
\int_{|\xi|=1}{\rm tr}\bigg(\sum_{kr}\partial_{x_k}^2(a^2)\partial_{x_r}^2(a^3)c(da^1)c(da^4)|\xi|^{-2m}\bigg)\sigma(\xi)&=-\sum_{kr}\partial_{x_k}^2(a^2)\partial_{x_r}^2(a^3)g(da^1,da^4){\rm tr}[id]\nonumber\\
&=-\Delta(a^2)\Delta(a^3)g(da^1,da^4){\rm tr}[id]area(S_n) .
\end{align*}
$\mathbf{(ii-1-b)}$(See Appendix)
\begin{align*}
&\int_{|\xi|=1}{\rm tr}\bigg(2\sum_{kp}\partial_{x_k}^2(a^2)\partial_{x_p}(a^3)c(da^1)\partial_{x_p}[c(da^4)]|\xi|^{-2m}\bigg)\sigma(\xi)\nonumber\\
&=-2\sum_{kp}\partial_{x_k}^2(a^2)\partial_{x_p}(a^3)\sum_\mu e_\mu(a^1)\partial_{x_p}[e_\mu(a^4)]{\rm tr}[id]area(S_n) \nonumber\\
&=2\Delta(a^2)g(\nabla(a^1),\nabla_{\nabla(a^3)}\nabla(a^4)){\rm tr}[id]area(S_n) .
\end{align*}
$\mathbf{(ii-1-c)}$
\begin{align*}
&\int_{|\xi|=1}{\rm tr}\bigg(-\frac{4}{3}\sum_{jr\alpha}\partial_{x_j}(a^2)\partial_{x_r}(a^3)R_{j\alpha r\alpha}(x_0)c(da^1)c(da^4)|\xi|^{-2m}\bigg)\sigma(\xi)\nonumber\\
&=\frac{4}{3}\sum_{jr\alpha}\partial_{x_j}(a^2)\partial_{x_r}(a^3)R_{j\alpha r\alpha}(x_0)g(da^1,da^4){\rm tr}[id]area(S_n) \nonumber\\
&=\frac{4}{3}\sum_\alpha R(\nabla(a^2),e_\alpha,\nabla(a^3),e_\alpha)g(da^1,da^4){\rm tr}[id]area(S_n).
\end{align*}
$\mathbf{(ii-1-d)}$
\begin{align*}
&\int_{|\xi|=1}{\rm tr}\bigg(2\sum_{jp}\partial_{x_j}(a^2)\partial_{x_p}^2(a^3)c(da^1)\partial_{x_j}[c(da^4)]|\xi|^{-2m}\bigg)\sigma(\xi)\nonumber\\
&=-2\sum_{jp}\partial_{x_j}(a^2)\partial_{x_p}^2(a^3)\sum_\mu e_\mu(a^1)\partial_{x_j}[e_\mu(a^4)]{\rm tr}[id]area(S_n) \nonumber\\
&=2\Delta(a^3)g(\nabla(a^1),\nabla_{\nabla(a^2)}\nabla(a^4)){\rm tr}[id]area(S_n) .
\end{align*}
$\mathbf{(ii-1-e)}$
\begin{align*}
&\int_{|\xi|=1}{\rm tr}\bigg(\frac{1}{2}\sum_{jr}\partial_{x_j}(a^2)\partial_{x_r}(a^3)\sum_{st}R_{jrts}(x_0)c(e_s)c(e_t)c(da^1)c(da^4)|\xi|^{-2m}\bigg)\sigma(\xi)\nonumber\\
&=-\sum_{jrst}\partial_{x_j}(a^2)\partial_{x_r}(a^3)R_{jrts}(x_0)e_s(a^1)e_t(a^4){\rm tr}[id]area(S_n) \nonumber\\
&=-R(\nabla(a^2),\nabla(a^3),\nabla(a^4),\nabla(a^1)){\rm tr}[id]area(S_n) .
\end{align*}
$\mathbf{(ii-1-f)}$
\begin{align*}
&\int_{|\xi|=1}{\rm tr}\bigg(2\sum_{jr}\partial_{x_j}(a^2)\partial_{x_j}\partial_{x_r}^2(a^3)c(da^1)c(da^4)|\xi|^{-2m}\bigg)\sigma(\xi)\nonumber\\
&=-2\sum_{jr}\partial_{x_j}(a^2)\partial_{x_j}\partial_{x_r}^2(a^3)g(da^1,da^4){\rm tr}[id]area(S_n) \nonumber\\
&=\bigg(2\nabla(a^2)[\Delta(a^3)]-\frac{4}{3}\sum_\alpha R(\nabla (a^2),e_\alpha,\nabla (a^3),e_\alpha)\bigg)g(da^1,da^4){\rm tr}[id]area(S_n) .
\end{align*}
$\mathbf{(ii-1-g)}$(See Appendix)
\begin{align*}
&\int_{|\xi|=1}{\rm tr}\bigg(4\sum_{jp}\partial_{x_j}(a^2)\partial_{x_p}(a^3)c(da^1)\partial_{x_j}\partial_{x_p}[c(da^4)]|\xi|^{-2m}\bigg)\sigma(\xi)\nonumber\\
&=-4\sum_{jp}\partial_{x_j}(a^2)\partial_{x_p}(a^3)\sum_\mu e_\mu(a^1)\partial_{x_j}\partial_{x_p}[e_\mu(a^4)]{\rm tr}[id]area(S_n) \nonumber\\
&=-4\Big(g(\nabla(a^1),\nabla_{\nabla(a^2)}\nabla_{\nabla(a^3)}\nabla(a^4))-g(\nabla_{\nabla(a^1)}\nabla(a^4),\nabla_{\nabla(a^2)}\nabla(a^3))\nonumber\\
&-\frac{1}{2}R(\nabla(a^1),\nabla(a^3),\nabla(a^2),\nabla(a^4))\Big){\rm tr}[id]area(S_n) .
\end{align*}
$\mathbf{(ii-1-h)}$(See Appendix)
\begin{align*}
&\int_{|\xi|=1}{\rm tr}\bigg(4\sum_{jp}\partial_{x_j}(a^2)\partial_{x_j}\partial_{x_p}(a^3)c(da^1)\partial_{x_p}[c(da^4)]|\xi|^{-2m}\bigg)\sigma(\xi)\nonumber\\
&=-4\sum_{jp}\partial_{x_j}(a^2)\partial_{x_j}\partial_{x_p}(a^3)\sum_\mu e_\mu(a^1)\partial_{x_p}[e_\mu(a^4)]{\rm tr}[id]area(S_n) \nonumber\\
&=4g(\nabla(a^1),\nabla_{\nabla_{\nabla(a^2)}\nabla(a^3)}\nabla(a^4)){\rm tr}[id]area(S_n) .
\end{align*}
Therefore, we get
\begin{align*}
&\int_{|\xi|=1}{\rm tr}\bigg(\sigma_0(A_1B_1)\sigma_{-2m}(D^{-2m})(x_0)\bigg)\sigma(\xi)\nonumber\\
&=\bigg(-\Delta(a^2)\Delta(a^3)g(da^1,da^4)+2\Delta(a^2)g(\nabla(a^1),\nabla_{\nabla(a^3)}\nabla(a^4))-R(\nabla(a^2),\nabla(a^3),\nabla(a^4),\nabla(a^1))\nonumber\\
&+2\nabla(a^2)[\Delta(a^3)]g(da^1,da^4)+2\Delta(a^3)g(\nabla(a^1),\nabla_{\nabla(a^2)}\nabla(a^4))-4g(\nabla(a^1),\nabla_{\nabla(a^2)}\nabla_{\nabla(a^3)}\nabla(a^4))\nonumber\\
&+4g(\nabla_{\nabla(a^1)}\nabla(a^4),\nabla_{\nabla(a^2)}\nabla(a^3))+2R(\nabla(a^1),\nabla(a^3),\nabla(a^2),\nabla(a^4))+4g(\nabla(a^1),\nabla_{\nabla_{\nabla(a^2)}\nabla(a^3)}\nabla(a^4))\bigg){\rm tr}[id]area(S_n) .
\end{align*}
$\mathbf{(ii-2)}$ For $\sigma_1(A_1B_1)\sigma_{-2m-1}(D^{-2m})$:\\
By (\ref{a33}), we obtain
\begin{align*}
\int_{|\xi|=1}{\rm tr}\bigg(\sigma_1(A_1B_1)\sigma_{-2m-1}(D^{-2m})(x_0)\bigg)\sigma(\xi)=0.
\end{align*}
$\mathbf{(3)}$ For $\sigma_2(A_1B_1)\sigma_{-2m-2}(D^{-2m})$:\\
By (\ref{a1}), in normal coordinates, we get the following result.
 \begin{align}\label{A1}
&\sigma_2(A_1B_1)\sigma_{-2m-2}(D^{-2m})(x_0)\nonumber\\
&=m\sum_{jl}\partial_{x_j}(a^2)\partial_{x_l}(a^3)\xi_j\xi_lc(da^1)(da^4)|\xi|^{-2m-2}s\nonumber\\
&-\frac{4m(m+1)}{3}\sum_{jl}\partial_{x_j}(a^2)\partial_{x_l}(a^3)\sum_{\mu\nu\alpha}R_{\mu\alpha\nu\alpha}(x_0)\xi_j\xi_l\xi_\mu\xi_\nu c(da^1)c(da^4)|\xi|^{-2m-4}.
\end{align}
$\mathbf{(ii-3-a)}$
\begin{align*}
&\int_{|\xi|=1}m{\rm tr}\bigg(\sum_{jl}\partial_{x_j}(a^2)\partial_{x_l}(a^3)\xi_j\xi_lc(da^1)c(da^4)|\xi|^{-2m-2}s\bigg)\sigma(\xi)\nonumber\\
&=-\frac{1}{2}\sum_{j}\partial_{x_j}(a^2)\partial_{x_j}(a^3)area(S_n)g(da^1,da^4)s{\rm tr}[id]\nonumber\\
&=-\frac{1}{2}g(\nabla(a^2),\nabla(a^3))area(S_n)g(da^1,da^4)s{\rm tr}[id].
\end{align*}
$\mathbf{(ii-3-b)}$
\begin{align*}
&\int_{|\xi|=1}\frac{-4m(m+1)}{3}{\rm tr}\bigg(\sum_{jl}\partial_{x_j}(a^2)\partial_{x_l}(a^3)\sum_{\mu\nu\alpha}R_{\mu\alpha\nu\alpha}(x_0)\xi_j\xi_l\xi_\mu\xi_\nu c(da^1)c(da^4)|\xi|^{-2m-4}\bigg)\sigma(\xi)\nonumber\\
&=\frac{1}{3}\bigg(\sum_{j\mu\alpha}\partial_{x_j}(a^2)\partial_{x_j}(a^3)R_{\mu\alpha\mu\alpha}(x_0)+2\sum_{jl \alpha}\partial_{x_j}(a^2)\partial_{x_l}(a^3)R_{j\alpha l\alpha}(x_0)\bigg)area(S_n)g(da^1,da^4){\rm tr}[id]\nonumber\\
&=\frac{1}{3}\bigg(\sum_{\mu\alpha}g(\nabla(a^2),\nabla(a^3))R_{\mu\alpha\mu\alpha}(x_0)+2\sum_{\alpha }R(\nabla(a^2),e_\alpha,\nabla(a^3),e_\alpha)\bigg)area(S_n)g(da^1,da^4){\rm tr}[id]\nonumber\\
&=\frac{1}{3}\bigg(g(\nabla(a^2),\nabla(a^3))s+2\sum_{\alpha }R(\nabla(a^2),e_\alpha,\nabla(a^3),e_\alpha)\bigg)area(S_n)g(da^1,da^4){\rm tr}[id].
\end{align*}
Therefore, we get
\begin{align*}
&\int_{|\xi|=1}{\rm tr}\bigg(\sigma_2(A_1B_1)\sigma_{-2m-2}(D^{-2m})(x_0)\bigg)\sigma(\xi)\nonumber\\
&=\bigg(-\frac{1}{6}g(\nabla(a^2),\nabla(a^3))s+\frac{2}{3}\sum_{\alpha }R(\nabla(a^2),e_\alpha,\nabla(a^3),e_\alpha)\bigg)area(S_n)g(da^1,da^4){\rm tr}[id].
\end{align*}
$\mathbf{(ii-4)}$ For $-i\sum_{j=1}^n\partial_{\xi_j}[\sigma_2(A_1B_1)]\partial_{x_j}[\sigma_{-2m-1}(D^{-2m})]$:
 \begin{align*}
&-i\sum_{j=1}^n\partial_{\xi_j}[\sigma_2(A_1B_1)]\partial_{x_j}[\sigma_{-2m-1}(D^{-2m})](x_0)\nonumber\\
&=\frac{8m}{3}\sum_{jplk\alpha}\partial_{x_p}(a^2)\partial_{x_l}(a^3)[\partial_{\xi_j}(\xi_p)\xi_l\xi_k+\partial_{\xi_j}(\xi_l)\xi_p\xi_k]R_{j\alpha k\alpha}(x_0)c(da^1)c(da^4)|\xi|^{-2m-2}\nonumber\\
&-m\sum_{jplkts}\partial_{x_p}(a^2)\partial_{x_l}(a^3)[\partial_{\xi_j}(\xi_p)\xi_l\xi_k+\partial_{\xi_j}(\xi_l)\xi_p\xi_k]R_{jkts}(x_0)c(da^1)c(da^4)c(e_s)c(e_t)|\xi|^{-2m-2}\nonumber\\
&+\frac{16}{3}m\sum_{jplr\alpha\beta}\partial_{x_p}(a^2)\partial_{x_l}(a^3)R_{\alpha j\beta r}(x_0)[\partial_{\xi_j}(\xi_p)\xi_l\xi_r\xi_\alpha\xi_\beta+\partial_{\xi_j}(\xi_l)\xi_p\xi_r\xi_\alpha\xi_\beta]c(da^1)c(da^4)|\xi|^{-2m-4}\nonumber\\
&+\frac{16(m-k-1)}{3}\sum_{k=0}^{m-2}\sum_{\mu=1}^{2m}\sum_{jplr\alpha\beta}\partial_{x_p}(a^2)\partial_{x_l}(a^3)R_{\alpha j\beta \mu}(x_0)[\partial_{\xi_j}(\xi_p)\xi_l\xi_\mu\xi_\alpha\xi_\beta+\partial_{\xi_j}(\xi_l)\xi_p\xi_\mu\xi_\alpha\xi_\beta]c(da^1)c(da^4)|\xi|^{-2m-4}.
\end{align*}
$\mathbf{(ii-4-a)}$
\begin{align*}
&\int_{|\xi|=1}{\rm tr}\bigg(\frac{8m}{3}\sum_{jplk\alpha}\partial_{x_p}(a^2)\partial_{x_l}(a^3)[\partial_{\xi_j}(\xi_p)\xi_l\xi_k+\partial_{\xi_j}(\xi_l)\xi_p\xi_k]R_{j\alpha k\alpha}(x_0)c(da^1)c(da^4)|\xi|^{-2m-2}\bigg)\sigma(\xi)\nonumber\\
&=-\frac{8}{3}\sum_{pk\alpha}\partial_{x_p}(a^2)\partial_{x_k}(a^3)R_{p\alpha k\alpha}(x_0) area(S_n)g(da^1,da^4){\rm tr}[id]\nonumber\\
&=-\frac{8}{3}\sum_{\alpha}R(\nabla(a^2),e_\alpha,\nabla(a^3),e_\alpha) area(S_n)g(da^1,da^4){\rm tr}[id].
\end{align*}
$\mathbf{(ii-4-b)}$
\begin{align*}
&\int_{|\xi|=1}{\rm tr}\bigg(-m\sum_{jplkts}\partial_{x_p}(a^2)\partial_{x_l}(a^3)[\partial_{\xi_j}(\xi_p)\xi_l\xi_k+\partial_{\xi_j}(\xi_l)\xi_p\xi_k]R_{jkts}(x_0)c(da^1)c(da^4)c(e_s)c(e_t)|\xi|^{-2m-2}\bigg)\sigma(\xi)\nonumber\\
&=area(S^n)\sum_{jlts}\partial_{x_j}(a^2)\partial_{x_l}(a^3)R_{jlts}(x_0)e_s(a^1)e_t(a^4){\rm tr}[id]\nonumber\\
&+area(S^n)\sum_{jpts}\partial_{x_p}(a^2)\partial_{x_j}(a^3)R_{jpts}(x_0)e_s(a^1)e_t(a^4){\rm tr}[id]\nonumber\\
&=area(S^n)\Big(R(\nabla(a^2),\nabla(a^3),\nabla(a^4),\nabla(a^1))+R(\nabla(a^3),\nabla(a^2),\nabla(a^4),\nabla(a^1))\Big){\rm tr}[id]\nonumber\\
&=0.
\end{align*}
$\mathbf{(ii-4-c)}$
\begin{align*}
&\int_{|\xi|=1}{\rm tr}\bigg(\sum_{jplr\alpha\beta}\partial_{x_p}(a^2)\partial_{x_l}(a^3)R_{\alpha j\beta r}(x_0)[\partial_{\xi_j}(\xi_p)\xi_l\xi_r\xi_\alpha\xi_\beta+\partial_{\xi_j}(\xi_l)\xi_p\xi_r\xi_\alpha\xi_\beta]c(da^1)c(da^4)|\xi|^{-2m-4}\bigg)\sigma(\xi)\nonumber\\
&=-\bigg(\sum_{jlr\alpha\beta}\partial_{x_j}(a^2)\partial_{x_l}(a^3)R_{\alpha j\beta r}(x_0)I_{S_n}^{lr\alpha\beta}+\sum_{jr\alpha\beta}\partial_{x_p}(a^2)\partial_{x_j}(a^3)R_{\alpha j\beta r}(x_0)I_{S_n}^{pr\alpha\beta}\bigg)g(da^1,da^4){\rm tr}[id]\nonumber\\
&=0.
\end{align*}
$\mathbf{(ii-4-d)}$\\
Similarly, we get
\begin{align*}
&\int_{|\xi|=1}{\rm tr}\bigg(\sum_{jplr\alpha\beta}\partial_{x_p}(a^2)\partial_{x_l}(a^3)R_{\alpha j\beta \mu}(x_0)[\partial_{\xi_j}(\xi_p)\xi_l\xi_\mu\xi_\alpha\xi_\beta+\partial_{\xi_j}(\xi_l)\xi_p\xi_\mu\xi_\alpha\xi_\beta]c(da^1)c(da^4)|\xi|^{-2m-4}\bigg)\sigma(\xi)=0.
\end{align*}
Therefore, we get
\begin{align*}
&\int_{|\xi|=1}{\rm tr}\bigg(-i\sum_{j=1}^n\partial_{\xi_j}[\sigma_2(A_1B_1)]\partial_{x_j}[\sigma_{-2m-1}(D^{-2m})](x_0)\bigg)\sigma(\xi)\nonumber\\
&=-\frac{8}{3}\sum_{\alpha}R(\nabla(a^2),e_\alpha,\nabla(a^3),e_\alpha) area(S_n)g(da^1,da^4){\rm tr}[id].
\end{align*}
$\mathbf{(ii-5)}$ For $-i\Sigma_{j=1}^n\partial_{\xi_j}[\sigma_1(A_1B_1)]\partial_{x_j}[\sigma_{-2m}(D^{-2m})]$:\\
By (\ref{a44}), we get
\begin{align*}
\int_{|\xi|=1}{\rm tr}\bigg(-i\Sigma_{j=1}^n\partial_{\xi_j}[\sigma_1(A_1B_1)]\partial_{x_j}[\sigma_{-2m}(D^{-2m})]\bigg)(x_0)\sigma(\xi)=0.
\end{align*}
$\mathbf{(ii-6)}$ For $-\frac{1}{2}\sum_{jl}\partial_{\xi_j}\partial_{\xi_l}[\sigma_2(A_1B_1)]\partial_{x_j}\partial_{x_l}[\sigma_{-2m}(D^{-2m})]$:
\begin{align*}
&\int_{|\xi|=1}-\frac{4}{3}m{\rm tr}\bigg(\sum_{jlrs\alpha\beta}\partial_{x_r}(a^2)\partial_{x_s}(a^3)R_{\alpha j\beta l}(x_0)[\partial_{\xi_l}(\xi_r)\partial_{\xi_j}(\xi_s)+\partial_{\xi_l}(\xi_s)\partial_{\xi_j}(\xi_r)]\xi_\alpha\xi_\beta c(da^1)c(da^4)|\xi|^{-2m-2}\bigg)\sigma(\xi)\nonumber\\
&=\frac{4}{3}\sum_{jl\alpha}\partial_{x_j}(a^2)\partial_{x_l}(a^3)R_{\alpha j\alpha l}(x_0)area(S_n)g(da^1,da^4){\rm tr}[id]\nonumber\\
&=\frac{4}{3}\sum_{\alpha}R(\nabla(a^2),e_\alpha,\nabla(a^3),e_\alpha) area(S_n)g(da^1,da^4){\rm tr}[id].
\end{align*}
Therefore, we get
\begin{align*}
&\int_{|\xi|=1}{\rm tr}\bigg(-\frac{1}{2}\sum_{jl}\partial_{\xi_j}\partial_{\xi_l}[\sigma_2(A_1B_1)]\partial_{x_j}\partial_{x_l}[\sigma_{-2m}(D^{-2m})](x_0)\bigg)\sigma(\xi)\nonumber\\
&=\frac{4}{3}\sum_{\alpha}R(\nabla(a^2),e_\alpha,\nabla(a^3),e_\alpha)area(S_n)g(da^1,da^4){\rm tr}[id].
\end{align*}
Finally, we obtain
\begin{thm}
Let $P_1=[D,a^1][D^2,a^2][D^2,a^3][D,a^4]$, we have the following noncommutative integral of $P_1$ based on the Wodzicki residue
\begin{align*}
&\int\hspace{-1.05em}-a^0[D,a^1][D^2,a^2][D^2,a^3][D,a^4]D^{-2m}\nonumber\\
&=2^m\frac{2\pi^{\frac{n}{2}}}{\Gamma(\frac{n}{2})}\int_Ma_0 \bigg(-\Delta(a^2)\Delta(a^3)g(da^1,da^4)+2\Delta(a^2)g(\nabla(a^1),\nabla_{\nabla(a^3)}\nabla(a^4))-R(\nabla(a^2),\nabla(a^3),\nabla(a^4),\nabla(a^1))\nonumber\\
&+2\nabla(a^2)[\Delta(a^3)]g(da^1,da^4)+2\Delta(a^3)g(\nabla(a^1),\nabla_{\nabla(a^2)}\nabla(a^4))-4g(\nabla(a^1),\nabla_{\nabla(a^2)}\nabla_{\nabla(a^3)}\nabla(a^4))\nonumber\\
&+4g(\nabla_{\nabla(a^1)}\nabla(a^4),\nabla_{\nabla(a^2)}\nabla(a^3))+2R(\nabla(a^1),\nabla(a^3),\nabla(a^2),\nabla(a^4))+4g(\nabla(a^1),\nabla_{\nabla_{\nabla(a^2)}\nabla(a^3)}\nabla(a^4))\nonumber\\
&-\frac{1}{6}g(\nabla(a^2),\nabla(a^3))g(da^1,da^4)s-\frac{2}{3}\sum_{\alpha }R(\nabla(a^2),e_\alpha,\nabla(a^3),e_\alpha)g(da^1,da^4)
\bigg)d{\rm Vol_M}.
\end{align*}
\end{thm}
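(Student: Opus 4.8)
The plan is to compute $\int\hspace{-1.05em}- a^0 P_1 D^{-2m}$ straight from the definition (\ref{666}): this noncommutative integral equals $\int_{S^*M}\mathrm{tr}[\sigma_{-2m}(a^0 P_1 D^{-2m})](x,\xi)$, so the whole task is to extract the homogeneous component of order $-2m$ in the complete symbol of $P_1 D^{-2m}$, take its fibrewise trace on the spinor bundle, and integrate over the unit cosphere bundle. Since $a^0$ acts by multiplication it contributes only the scalar $a^0$, so the key object is $\sigma_{-2m}(P_1 D^{-2m})$. Writing $P_1 = A_1 B_1$ with $A_1 = [D,a^1][D^2,a^2]$ and $B_1 = [D^2,a^3][D,a^4]$, both $A_1$ and $B_1$ are classical pseudodifferential operators of order $1$ (a $[D,\cdot]$ lowers the order of a first-order operator to $0$, a $[D^2,\cdot]$ lowers that of a second-order operator to $1$), so each has only the symbol pieces of degree $1$ and $0$ --- exactly those recorded in the lemma on the symbols of $A_1$, $B_1$, obtained from Lemma \ref{lemma1} and the commutator calculus of Lemma \ref{lema2}. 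Feeding these into the composition rule $\sigma(A_1B_1)\sim\sum_\alpha\frac{(-i)^{|\alpha|}}{\alpha!}\,\partial_\xi^\alpha\sigma(A_1)\,\partial_x^\alpha\sigma(B_1)$ yields $\sigma_2(A_1B_1)$, $\sigma_1(A_1B_1)$, $\sigma_0(A_1B_1)$.

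I would then substitute the general-dimensional symbols $\sigma_{-2m}(D^{-2m})$, $\sigma_{-2m-1}(D^{-2m})$, $\sigma_{-2m-2}(D^{-2m})$ from Lemma \ref{lemkkk} and expand $\sigma_{-2m}(A_1B_1D^{-2m})$ into the same six summands that organize Part i): $\sigma_0(A_1B_1)\sigma_{-2m}(D^{-2m})$, $\sigma_1(A_1B_1)\sigma_{-2m-1}(D^{-2m})$, $\sigma_2(A_1B_1)\sigma_{-2m-2}(D^{-2m})$, and the three $\partial_\xi$--$\partial_x$ coupling terms. The computation is then localized at a point $x_0$ in normal coordinates, where (\ref{a1}) gives $\Gamma^\mu(x_0)=\sigma^\mu(x_0)=\partial_\mu g^{\alpha\beta}(x_0)=0$, $\partial_k\Gamma^\mu(x_0)=\tfrac23 R_{k\alpha\mu\alpha}(x_0)$ and $\partial_k\sigma^\mu(x_0)=\tfrac18 R_{k\mu ts}(x_0)c(e_s)c(e_t)$. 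By (\ref{a33}) this forces $\sigma_{-2m-1}(D^{-2m})(x_0)=0$ and by (\ref{a44}) the $x$-derivative of $|\xi|^{-2m}$ vanishes after angular integration, so the summand $\sigma_1(A_1B_1)\sigma_{-2m-1}(D^{-2m})$ drops out entirely and the coupling term $-i\sum_j\partial_{\xi_j}[\sigma_1(A_1B_1)]\partial_{x_j}[\sigma_{-2m}(D^{-2m})]$ integrates to zero, leaving the four surviving contributions (ii-1), (ii-3), (ii-4), (ii-6); within (ii-4) the antisymmetries of the Riemann tensor further annihilate all but the double contraction $\sum_\alpha R(\nabla(a^2),e_\alpha,\nabla(a^3),e_\alpha)$.

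For each surviving piece I would take the spinor trace using the Clifford identities $\mathrm{tr}[c(e_s)c(e_t)]=-\delta_{st}\mathrm{tr}[\mathrm{id}]$ and the four-factor expansion of $\mathrm{tr}[c(e_s)c(e_t)c(e_\alpha)c(e_\beta)]$ already used in Part i), rewrite the resulting Euclidean partial derivatives of $a^1,\dots,a^4$ (and of $c(da^1)$, $c(da^4)$) as covariant derivatives, Hessians, and Riemann-tensor contractions valid at $x_0$, and then integrate the angular monomials $\xi_{\gamma_1}\cdots\xi_{\gamma_{2k}}$ over $\{|\xi|=1\}$ with the formulas for $I_{S_n}^{\gamma_1\gamma_2}$ and $I_{S_n}^{\gamma_1\cdots\gamma_4}$. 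Adding the contributions (ii-1)--(ii-6) one isolates the common factors $\mathrm{tr}[\mathrm{id}]=2^m$ and $\mathrm{area}(S_n)=2\pi^{n/2}/\Gamma(n/2)$; restoring the factor $a^0$ and integrating $\int_{S^*M}$ over $M$ (in normal coordinates the metric volume factor is $1$ at $x_0$) then gives the asserted identity. Because in Part ii) the outer brackets are $[D,a^1]$ and $[D,a^4]$ rather than $[D^2,\cdot]$, the functions $a^1$ and $a^4$ enter only through $c(da^1)$ and $c(da^4)$ while the Laplacians and iterated Hessians fall on $a^2$ and $a^3$; this is exactly what produces the new third-order terms $g(\nabla(a^1),\nabla_{\nabla(a^2)}\nabla_{\nabla(a^3)}\nabla(a^4))$, $g(\nabla_{\nabla(a^1)}\nabla(a^4),\nabla_{\nabla(a^2)}\nabla(a^3))$ and $g(\nabla(a^1),\nabla_{\nabla_{\nabla(a^2)}\nabla(a^3)}\nabla(a^4))$ in the statement.

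The main obstacle is the bookkeeping inside (ii-1), specifically the sub-terms (ii-1-b), (ii-1-g) and (ii-1-h), where $\partial_{x_p}[c(da^4)]$, $\partial_{x_j}\partial_{x_p}[c(da^4)]$ and the mixed third derivatives of $a^3,a^4$ must be converted to the iterated covariant derivatives above, and one must keep careful track of the curvature corrections that third-order normal-coordinate derivatives produce (for instance the $\tfrac12 R(\nabla(a^1),\nabla(a^3),\nabla(a^2),\nabla(a^4))$ term in (ii-1-g)); these conversions are exactly what is carried out in the Appendix. The remaining steps --- the symbol calculus, the Clifford trace identities, and the spherical integrals --- are entirely parallel to Part i).
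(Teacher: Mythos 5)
Your proposal is correct and follows essentially the same route as the paper: the same factorization $P_1=A_1B_1$ with $A_1=[D,a^1][D^2,a^2]$, $B_1=[D^2,a^3][D,a^4]$, the same six-term expansion of $\sigma_{-2m}(A_1B_1D^{-2m})$, the same normal-coordinate vanishing of the (ii-2) and (ii-5) summands, and the same Appendix conversions of $\partial_{x_p}[c(da^4)]$ and the mixed third derivatives into iterated covariant derivatives and curvature terms. Nothing in your outline deviates from or omits a step of the paper's argument.
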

$\mathbf{Part~~iii)}$ $P_2=[D,a^1][D,a^2][D^2,a^3][D^2,a^4]$, $[D,a^1][D,a^2][D^2,a^3]:=A_2,~[D^2,a^4]:=B_2$.\\
Similarly, by Lemma 3.1 in \cite{UW} and Lemma \ref{lemma1}, we get the following lemma.
\begin{lem}The symbols of $A$ and $B$ are given
\begin{align}
&\sigma_0(A_2)=c(da^1)c(da^2)[\sum_{j=1}^n\partial_{x_j}(a^3)(\Gamma^j-2\sigma^j)(x)-\sum_{jl=1}^n\partial_{x_j}\partial_{x_l}(a^3)g^{jl}]\nonumber;\\
&\sigma_1(A_2)=-2ic(da^1)c(da^2)\sum_{jl=1}^n\partial_{x_j}(a^3)g^{jl}\xi_l\nonumber;\\
&\sigma_0(B_2)=\sum_{j=1}^n\partial_{x_j}(a^4)(\Gamma^j-2\sigma^j)(x)-\sum_{jl=1}^n\partial_{x_j}\partial_{x_l}(a^4)g^{jl}\nonumber;\\
&\sigma_1(B_2)=-2i\sum_{jl=1}^n\partial_{x_j}(a^4)g^{jl}\xi_l.
\end{align}
\end{lem}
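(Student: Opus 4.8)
The plan is to reduce everything to two elementary facts of symbol calculus. The first is that for any smooth function $a$ one has $[D,a]=c(da)$: the commutator of the Dirac operator with a multiplication operator is Clifford multiplication by the exact one-form $da=\sum_\mu\partial_{x_\mu}(a)\,dx^\mu$. This is immediate from the Leibniz rule, since $D$ is a first-order differential operator with principal symbol $\sqrt{-1}\,c(\xi)$ and $\xi$-independent subprincipal symbol; in particular $[D,a^1]$ and $[D,a^2]$ are \emph{zeroth-order} operators, namely multiplication by the matrix-valued functions $c(da^1)$ and $c(da^2)$, with no lower-order symbol terms. The second fact is that left composition with a multiplication operator multiplies symbols without correction terms: if $M$ is multiplication by a matrix-valued function and $Q$ is a pseudodifferential operator, then in $\sigma(MQ)\sim\sum_\alpha\frac{(-i)^{|\alpha|}}{\alpha!}\partial_\xi^\alpha\sigma(M)\,\partial_x^\alpha\sigma(Q)$ only the $\alpha=0$ term survives, because $\sigma(M)$ is independent of $\xi$, so $\sigma(MQ)=\sigma(M)\sigma(Q)$ exactly.

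First I would compute $\sigma(B_2)=\sigma([D^2,a^4])$ from Lemma \ref{lema2} with $S=D^2$ (order $k=2$) and $f=a^4$, using $\sigma_2(D^2)=|\xi|^2$, $\sigma_1(D^2)=i(\Gamma^\mu-2\sigma^\mu)\xi_\mu$ from Lemma \ref{lemma1}. The order-one piece is $\sigma_1[D^2,a^4]=\sum_{|\beta|=1}\frac{D_x^\beta(a^4)}{\beta!}\partial_\xi^\beta(\sigma_2(D^2))$; since $\partial_{\xi_j}|\xi|^2=2\sum_l g^{jl}\xi_l$ and $D_{x_j}=-i\partial_{x_j}$, this equals $-2i\sum_{jl}\partial_{x_j}(a^4)g^{jl}\xi_l$. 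The order-zero piece $\sigma_0[D^2,a^4]$ is the sum of $\sum_{|\beta|=1}\frac{D_x^\beta(a^4)}{\beta!}\partial_\xi^\beta(\sigma_1(D^2))$, which collapses to $\sum_j\partial_{x_j}(a^4)(\Gamma^j-2\sigma^j)$, and $\sum_{|\beta|=2}\frac{D_x^\beta(a^4)}{\beta!}\partial_\xi^\beta(\sigma_2(D^2))$, which, after combining the diagonal multi-indices $\beta=2e_j$ (weight $\tfrac12$, with $\partial_{\xi_j}^2|\xi|^2=2g^{jj}$) with the off-diagonal ones $\beta=e_j+e_l$ (with $\partial_{\xi_j}\partial_{\xi_l}|\xi|^2=2g^{jl}$), collapses to $-\sum_{jl}\partial_{x_j}\partial_{x_l}(a^4)g^{jl}$. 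This is exactly the stated $\sigma_0(B_2),\sigma_1(B_2)$, and the identical computation with $a^3$ in place of $a^4$ yields $\sigma([D^2,a^3])$.

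Finally I would write $A_2=[D,a^1][D,a^2][D^2,a^3]=c(da^1)\,c(da^2)\,[D^2,a^3]$ and apply the second fact with $M=c(da^1)c(da^2)$ and $Q=[D^2,a^3]$, giving $\sigma_j(A_2)=c(da^1)c(da^2)\,\sigma_j([D^2,a^3])$ for each $j$. Substituting the symbols of $[D^2,a^3]$ just computed yields the claimed formulas for $\sigma_0(A_2)$ and $\sigma_1(A_2)$.

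There is no genuine obstacle here; the statement is a direct application of the composition formula for symbols together with Lemmas \ref{lemma1} and \ref{lema2}. The only points requiring care are bookkeeping: tracking the factor $\sqrt{-1}$ in the conventions $D_{x_j}=\tfrac1i\partial_{x_j}$ and $\sigma_1(D)=\sqrt{-1}\,c(\xi)$, carrying out the combinatorics of the weighted sum over multi-indices $\beta$ in the $\sigma_0$ term of $[D^2,a^4]$, and observing that $[D,a^1]$ and $[D,a^2]$ truly reduce to multiplication operators, so that no $\partial_\xi$ corrections arise when they are composed with $[D^2,a^3]$.
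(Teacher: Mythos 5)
Your proposal is correct and follows essentially the same route as the paper, which obtains this lemma directly from Lemma \ref{lemma1} and Lemma \ref{lema2} (Lemma 3.1 of \cite{UW}) applied to $S=D^2$, $f=a^3$ or $a^4$, together with the observation that $[D,a^1][D,a^2]=c(da^1)c(da^2)$ is a $\xi$-independent multiplication operator so left composition produces no correction terms. The paper offers no further detail, and your explicit verification of the $|\beta|=1$ and $|\beta|=2$ contributions to $\sigma_1(B_2)$ and $\sigma_0(B_2)$ reproduces exactly the stated formulas (up to the paper's own implicit normalization of $c(da)$ versus $i\,c(da)$, which you correctly flag as a convention to track).
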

Next, we calculate each term of  $\int_{S^*M}{\rm tr}[\sigma_{-2m}(A_2B_2D^{-2m})](x,\xi)$ separately.\\
$\mathbf{(iii-1)}$ For $\sigma_0(A_2B_2)\sigma_{-2m}(D^{-2m})$
 \begin{align}
&\sigma_0(A_2B_2)\sigma_{-2m}(D^{-2m})(x_0)\nonumber\\
&=\sum_{kr}\partial_{x_k}^2(a^3)\partial_{x_r}^2(a^4)c(da^1)c(da^2)|\xi|^{-2m}-\frac{4}{3}\sum_{jr\alpha}\partial_{x_j}(a^3)\partial_{x_r}(a^4)R_{j\alpha r\alpha}(x_0)c(da^1)c(da^2)|\xi|^{-2m}\nonumber\\
&+\frac{1}{2}\sum_{jrst}\partial_{x_j}(a^3)\partial_{x_r}(a^4)R_{jrts}(x_0)c(e_s)c(e_t)c(da^1)c(da^2)|\xi|^{-2m}\nonumber\\
&+2\sum_{jr}\partial_{x_j}(a^3)\partial_{x_j}\partial_{x_r}^2(a^4)c(da^1)c(da^2)|\xi|^{-2m}\bigg)|\xi|^{-2m}.
\end{align}
$\mathbf{(iii-1-a)}$
\begin{align*}
\int_{|\xi|=1}{\rm tr}\bigg(\sum_{kr}\partial_{x_k}^2(a^3)\partial_{x_r}^2(a^4)c(da^1)c(da^2)|\xi|^{-2m}\bigg)\sigma(\xi)&=-\sum_{kr}\partial_{x_k}^2(a^3)\partial_{x_r}^2(a^4)g(da^1,da^2){\rm tr}[id]area(S_n)\nonumber\\
&=-\Delta(a^3)\Delta(a^4)g(da^1,da^2){\rm tr}[id]area(S_n).
\end{align*}
$\mathbf{(iii-1-b)}$
\begin{align*}
&\int_{|\xi|=1}{\rm tr}\bigg(-\frac{4}{3}\sum_{jr\alpha}\partial_{x_j}(a^3)\partial_{x_r}(a^4)R_{j\alpha r\alpha}(x_0)c(da^1)c(da^2)|\xi|^{-2m}\bigg)\sigma(\xi)\nonumber\\
&=\frac{4}{3}\sum_{jr\alpha}\partial_{x_j}(a^3)\partial_{x_r}(a^4)R_{j\alpha r\alpha}(x_0)g(da^1,da^2){\rm tr}[id]area(S_n)\nonumber\\
&=\frac{4}{3}\sum_\alpha R(\nabla (a^3),e_\alpha,\nabla (a^4),e_\alpha)g(da^1,da^2){\rm tr}[id]area(S_n).
\end{align*}
$\mathbf{(iii-1-c)}$
\begin{align*}
&\int_{|\xi|=1}{\rm tr}\bigg(\frac{1}{2}\sum_{jr}\partial_{x_j}(a^3)\partial_{x_r}(a^4)\sum_{st}R_{jrts}(x_0)c(e_s)c(e_t)c(da^1)c(da^2)|\xi|^{-2m}\bigg)\sigma(\xi)\nonumber\\
&=-\sum_{jrst}\partial_{x_j}(a^3)\partial_{x_r}(a^4)R_{jrts}(x_0)e_s(a^1)e_t(a^2){\rm tr}[id]area(S_n)\nonumber\\
&=-R(\nabla(a^3),\nabla(a^4),\nabla(a^2),\nabla(a^1)){\rm tr}[id]area(S_n).
\end{align*}
$\mathbf{(iii-1-d)}$
\begin{align*}
&\int_{|\xi|=1}{\rm tr}\bigg(2\sum_{jr}\partial_{x_j}(a^3)\partial_{x_j}\partial_{x_r}^2(a^4)c(da^1)c(da^2)|\xi|^{-2m}\bigg)\sigma(\xi)\nonumber\\
&=-2\sum_{jr}\partial_{x_j}(a^3)\partial_{x_j}\partial_{x_r}^2(a^4)g(da^1,da^2){\rm tr}[id]area(S_n)\nonumber\\
&=\bigg(2\nabla(a^3)[\Delta(a^4)]-\frac{4}{3}\sum_\alpha R(\nabla (a^3),e_\alpha,\nabla (a^4),e_\alpha)\bigg)g(da^1,da^2){\rm tr}[id]area(S_n).
\end{align*}
Therefore, we get
\begin{align*}
&\int_{|\xi|=1}{\rm tr}\bigg(\sigma_0(A_2B_2)\sigma_{-2m}(D^{-2m})(x_0)\bigg)\sigma(\xi)\nonumber\\
&=\Big(-\Delta(a^3)\Delta(a^4)-R(\nabla(a^3),\nabla(a^4),\nabla(a^2),\nabla(a^1))+2\nabla(a^3)[\Delta(a^4)]\Big)g(da^1,da^2){\rm tr}[id]area(S_n).
\end{align*}
$\mathbf{(iii-2)}$ For $\sigma_1(A_2B_2)\sigma_{-2m-1}(D^{-2m})$:\\
By (\ref{a33}), we get
\begin{align*}
\int_{|\xi|=1}{\rm tr}\bigg(\sigma_1(A_2B_2)\sigma_{-2m-1}(D^{-2m})(x_0)\bigg)\sigma(\xi)=0.
\end{align*}
$\mathbf{(iii-3)}$ For $\sigma_2(A_2B_2)\sigma_{-2m-2}(D^{-2m})$:\\
 \begin{align}\label{A1}
\sigma_2(A_2B_2)\sigma_{-2m-2}(D^{-2m})(x_0)
&=m\sum_{jl}\partial_{x_j}(a^3)\partial_{x_l}(a^4)\xi_j\xi_lc(da^1)c(da^2)|\xi|^{-2m-2}s\nonumber\\
&-\frac{4m(m+1)}{3}\sum_{jl}\partial_{x_j}(a^3)\partial_{x_l}(a^4)\sum_{\mu\nu\alpha}R_{\mu\alpha\nu\alpha}(x_0)\xi_j\xi_l\xi_\mu\xi_\nu c(da^1)c(da^2)|\xi|^{-2m-4}.
\end{align}
$\mathbf{(iii-3-a)}$
\begin{align*}
&\int_{|\xi|=1}m{\rm tr}\bigg(\sum_{jl}\partial_{x_j}(a^3)\partial_{x_l}(a^4)\xi_j\xi_lc(da^1)c(da^2)|\xi|^{-2m-2}s\bigg)\sigma(\xi)\nonumber\\
&=-\frac{1}{2}\sum_{j}\partial_{x_j}(a^3)\partial_{x_j}(a^4)area(S_n)g(da^1,da^2)s{\rm tr}[id]\nonumber\\
&=-\frac{1}{2}g(\nabla(a^3),\nabla(a^4))area(S_n)g(da^1,da^2)s{\rm tr}[id].
\end{align*}
$\mathbf{(iii-3-b)}$
\begin{align*}
&\int_{|\xi|=1}-\frac{4m(m+1)}{3}{\rm tr}\bigg(\sum_{jl}\partial_{x_j}(a^3)\partial_{x_l}(a^4)\sum_{\mu\nu\alpha}R_{\mu\alpha\nu\alpha}(x_0)\xi_j\xi_l\xi_\mu\xi_\nu c(da^1)c(da^2)|\xi|^{-2m-4}\bigg)\sigma(\xi)\nonumber\\
&=\frac{1}{3}\bigg(\sum_{j\mu\alpha}\partial_{x_j}(a^3)\partial_{x_j}(a^4)R_{\mu\alpha\mu\alpha}(x_0)+2\sum_{jl \alpha}\partial_{x_j}(a^3)\partial_{x_l}(a^4)R_{j\alpha l\alpha}(x_0)\bigg)area(S_n)g(da^1,da^2){\rm tr}[id]\nonumber\\
&=\frac{1}{3}\bigg(\sum_{\mu\alpha}g(\nabla(a^3),\nabla(a^4))R_{\mu\alpha\mu\alpha}(x_0)+2\sum_{\alpha }R(\nabla(a^3),e_\alpha,\nabla(a^4),e_\alpha)\bigg)area(S_n)g(da^1,da^2){\rm tr}[id]\nonumber\\
&=\frac{1}{3}\bigg(g(\nabla(a^3),\nabla(a^4))s+2\sum_{\alpha }R(\nabla(a^3),e_\alpha,\nabla(a^4),e_\alpha)\bigg)area(S_n)g(da^1,da^2){\rm tr}[id].
\end{align*}
Therefore, we get
\begin{align*}
&\int_{|\xi|=1}{\rm tr}\bigg(\sigma_2(A_2B_2)\sigma_{-2m-2}(D^{-2m})(x_0)\bigg)\sigma(\xi)\nonumber\\
&=\bigg(-\frac{1}{6}g(\nabla(a^3),\nabla(a^4))s+\frac{2}{3}\sum_{\alpha }R(\nabla(a^3),e_\alpha,\nabla(a^4),e_\alpha)\bigg)area(S_n)g(da^1,da^2){\rm tr}[id].
\end{align*}
$\mathbf{(iii-4)}$ For $-i\sum_{j=1}^n\partial_{\xi_j}[\sigma_2(A_2B_2)]\partial_{x_j}[\sigma_{-2m-1}(D^{-2m})]$:
 \begin{align*}
&-i\sum_{j=1}^n\partial_{\xi_j}[\sigma_2(A_2B_2)]\partial_{x_j}[\sigma_{-2m-1}(D^{-2m})](x_0)\nonumber\\
&=\frac{8m}{3}\sum_{jplk\alpha}\partial_{x_p}(a^3)\partial_{x_l}(a^4)[\partial_{\xi_j}(\xi_p)\xi_l\xi_k+\partial_{\xi_j}(\xi_l)\xi_p\xi_k]R_{j\alpha k\alpha}(x_0)c(da^1)c(da^2)|\xi|^{-2m-2}\nonumber\\
&-m\sum_{jplkts}\partial_{x_p}(a^3)\partial_{x_l}(a^4)[\partial_{\xi_j}(\xi_p)\xi_l\xi_k+\partial_{\xi_j}(\xi_l)\xi_p\xi_k]R_{jkts}(x_0)c(da^1)c(da^2)c(e_s)c(e_t)|\xi|^{-2m-2}\nonumber\\
&+\frac{16}{3}m\sum_{jplr\alpha\beta}\partial_{x_p}(a^3)\partial_{x_l}(a^4)R_{\alpha j\beta r}(x_0)[\partial_{\xi_j}(\xi_p)\xi_l\xi_r\xi_\alpha\xi_\beta+\partial_{\xi_j}(\xi_l)\xi_p\xi_r\xi_\alpha\xi_\beta]c(da^1)c(da^2)|\xi|^{-2m-4}\nonumber\\
&+\frac{16(m-k-1)}{3}\sum_{k=0}^{m-2}\sum_{\mu=1}^{2m}\sum_{jplr\alpha\beta}\partial_{x_p}(a^3)\partial_{x_l}(a^4)R_{\alpha j\beta \mu}(x_0)[\partial_{\xi_j}(\xi_p)\xi_l\xi_\mu\xi_\alpha\xi_\beta+\partial_{\xi_j}(\xi_l)\xi_p\xi_\mu\xi_\alpha\xi_\beta]c(da^1)c(da^2)|\xi|^{-2m-4}.
\end{align*}
$\mathbf{(iii-4-a)}$
\begin{align*}
&\int_{|\xi|=1}{\rm tr}\bigg(\frac{8m}{3}\sum_{jplk\alpha}\partial_{x_p}(a^3)\partial_{x_l}(a^4)[\partial_{\xi_j}(\xi_p)\xi_l\xi_k+\partial_{\xi_j}(\xi_l)\xi_p\xi_k]R_{j\alpha k\alpha}(x_0)c(da^1)c(da^2)|\xi|^{-2m-2}\bigg)\sigma(\xi)\nonumber\\
&=-\frac{8}{3}\sum_{pk\alpha}\partial_{x_p}(a^3)\partial_{x_k}(a^4)R_{p\alpha k\alpha}(x_0) area(S_n)g(da^1,da^2){\rm tr}[id]\nonumber\\
&=-\frac{8}{3}\sum_{\alpha}R(\nabla(a^3),e_\alpha,\nabla(a^4),e_\alpha) area(S_n)g(da^1,da^2){\rm tr}[id].
\end{align*}
$\mathbf{(iii-4-b)}$
\begin{align*}
&\int_{|\xi|=1}{\rm tr}\bigg(-m\sum_{jplkts}\partial_{x_p}(a^3)\partial_{x_l}(a^4)[\partial_{\xi_j}(\xi_p)\xi_l\xi_k+\partial_{\xi_j}(\xi_l)\xi_p\xi_k]R_{jkts}(x_0)c(da^1)c(da^2)c(e_s)c(e_t)|\xi|^{-2m-2}\bigg)\sigma(\xi)\nonumber\\
&=area(S^n)\sum_{jlts}\partial_{x_j}(a^3)\partial_{x_l}(a^4)R_{jlts}(x_0)e_s(a^1)e_t(a^2){\rm tr}[id]\nonumber\\
&+area(S^n)\sum_{jpts}\partial_{x_p}(a^3)\partial_{x_j}(a^4)R_{jpts}(x_0)e_s(a^1)e_t(a^2){\rm tr}[id]\nonumber\\
&=area(S^n)\Big(R(\nabla(a^3),\nabla(a^4),\nabla(a^2),\nabla(a^1))+R(\nabla(a^4),\nabla(a^3),\nabla(a^2),\nabla(a^1))\Big){\rm tr}[id]\nonumber\\
&=0..
\end{align*}
$\mathbf{(iii-4-c)}$
\begin{align*}
&\int_{|\xi|=1}{\rm tr}\bigg(\sum_{jplr\alpha\beta}\partial_{x_p}(a^3)\partial_{x_l}(a^4)R_{\alpha j\beta r}(x_0)[\partial_{\xi_j}(\xi_p)\xi_l\xi_r\xi_\alpha\xi_\beta+\partial_{\xi_j}(\xi_l)\xi_p\xi_r\xi_\alpha\xi_\beta]c(da^1)c(da^2)|\xi|^{-2m-4}\bigg)\sigma(\xi)\nonumber\\
&=-\bigg(\sum_{jlr\alpha\beta}\partial_{x_j}(a^3)\partial_{x_l}(a^4)R_{\alpha j\beta r}(x_0)I_{S_n}^{lr\alpha\beta}+\sum_{jr\alpha\beta}\partial_{x_p}(a^3)\partial_{x_j}(a^4)R_{\alpha j\beta r}(x_0)I_{S_n}^{pr\alpha\beta}\bigg)g(da^1,da^2){\rm tr}[id]\nonumber\\
&=0.
\end{align*}
$\mathbf{(iii-4-d)}$\\
Similarly, we get
\begin{align*}
&\int_{|\xi|=1}{\rm tr}\bigg(\sum_{jplr\alpha\beta}\partial_{x_p}(a^3)\partial_{x_l}(a^4)R_{\alpha j\beta \mu}(x_0)[\partial_{\xi_j}(\xi_p)\xi_l\xi_\mu\xi_\alpha\xi_\beta+\partial_{\xi_j}(\xi_l)\xi_p\xi_\mu\xi_\alpha\xi_\beta]c(da^1)c(da^2)|\xi|^{-2m-4}\bigg)\sigma(\xi)=0.
\end{align*}
Therefore, we get
\begin{align*}
&\int_{|\xi|=1}{\rm tr}\bigg(-i\sum_{j=1}^n\partial_{\xi_j}[\sigma_2(A_2B_2)]\partial_{x_j}[\sigma_{-2m-1}(D^{-2m})](x_0)\bigg)\sigma(\xi)\nonumber\\
&=-\frac{8}{3}\sum_{\alpha}R(\nabla(a^3),e_\alpha,\nabla(a^4),e_\alpha) area(S_n)g(da^1,da^2){\rm tr}[id].
\end{align*}
$\mathbf{(iii-5)}$ For $-i\Sigma_{j=1}^n\partial_{\xi_j}[\sigma_1(A_2B_2)]\partial_{x_j}[\sigma_{-2m}(D^{-2m})]$:\\
In normal coordinates, by (\ref{a44}), we get
\begin{align*}
\int_{|\xi|=1}{\rm tr}\bigg(-i\Sigma_{j=1}^n\partial_{\xi_j}[\sigma_1(A_2B_2)]\partial_{x_j}[\sigma_{-2m}(D^{-2m})]\bigg)(x_0)\sigma(\xi)=0.
\end{align*}
$\mathbf{(iii-6)}$ For $-\frac{1}{2}\sum_{jl}\partial_{\xi_j}\partial_{\xi_l}[\sigma_2(A_2B_2)]\partial_{x_j}\partial_{x_l}[\sigma_{-2m}(D^{-2m})]$:\\
By (\ref{a1}) then in normal coordinates, we get the following result.
\begin{align*}
&\int_{|\xi|=1}-\frac{4}{3}m{\rm tr}\bigg(\sum_{jlrs\alpha\beta}\partial_{x_r}(a^3)\partial_{x_s}(a^4)R_{\alpha j\beta l}(x_0)[\partial_{\xi_l}(\xi_r)\partial_{\xi_j}(\xi_s)+\partial_{\xi_l}(\xi_s)\partial_{\xi_j}(\xi_r)]\xi_\alpha\xi_\beta c(da^1)c(da^2)|\xi|^{-2m-2}\bigg)\sigma(\xi)\nonumber\\
&=\frac{4}{3}\sum_{jl\alpha}\partial_{x_j}(a^3)\partial_{x_l}(a^4)R_{\alpha j\alpha l}(x_0)area(S_n)g(da^1,da^2){\rm tr}[id]\nonumber\\
&=\frac{4}{3}\sum_{\alpha}R(\nabla(a^3),e_\alpha,\nabla(a^4),e_\alpha) area(S_n)g(da^1,da^2){\rm tr}[id].
\end{align*}
Therefore, we get
\begin{align*}
&\int_{|\xi|=1}{\rm tr}\bigg(-\frac{1}{2}\sum_{jl}\partial_{\xi_j}\partial_{\xi_l}[\sigma_2(A_2B_2)]\partial_{x_j}\partial_{x_l}[\sigma_{-2m}(D^{-2m})](x_0)\bigg)\sigma(\xi)\nonumber\\
&=\frac{4}{3}\sum_{\alpha}R(\nabla(a^3),e_\alpha,\nabla(a^4),e_\alpha) area(S_n)g(da^1,da^2){\rm tr}[id].
\end{align*}
Finally, we obtain
\begin{thm}
Let $P_2=[D,a^1][D,a^2][D^2,a^3][D^2,a^4]$, we have the following noncommutative integral of $P_2$ based on the Wodzicki residue
\begin{align*}
&\int\hspace{-1.05em}-a^0[D,a^1][D,a^2][D^2,a^3][D^2,a^4]D^{-2m}\nonumber\\
&=2^m\frac{2\pi^{\frac{n}{2}}}{\Gamma(\frac{n}{2})} \int_Ma_0\bigg[\bigg(-\Delta(a^3)\Delta(a^4)+2\nabla(a^3)[\Delta(a^4)]-R(\nabla(a^3),\nabla(a^4),\nabla(a^2),\nabla(a^1))\nonumber\\
&-\frac{1}{6}g(\nabla(a^3),\nabla(a^4))s-\frac{2}{3}\sum_{\alpha }R(\nabla(a^3),e_\alpha,\nabla(a^4),e_\alpha)(x_0)\bigg)g(da^1,da^2)\bigg]d{\rm Vol_M}.
\end{align*}
\end{thm}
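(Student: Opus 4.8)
The plan is to follow exactly the scheme already executed in Parts i) and ii). By (\ref{666}) the quantity to evaluate is $\int_{S^*M}\mathrm{tr}[\sigma_{-2m}(A_2B_2D^{-2m})](x,\xi)$, and the composition formula for pseudodifferential operators expands $\sigma_{-2m}(A_2B_2D^{-2m})$ into the same six pieces as in Part i): $\sigma_0(A_2B_2)\sigma_{-2m}(D^{-2m})$, $\sigma_1(A_2B_2)\sigma_{-2m-1}(D^{-2m})$, $\sigma_2(A_2B_2)\sigma_{-2m-2}(D^{-2m})$, $-i\sum_j\partial_{\xi_j}[\sigma_2(A_2B_2)]\partial_{x_j}[\sigma_{-2m-1}(D^{-2m})]$, $-i\sum_j\partial_{\xi_j}[\sigma_1(A_2B_2)]\partial_{x_j}[\sigma_{-2m}(D^{-2m})]$ and $-\tfrac{1}{2}\sum_{jl}\partial_{\xi_j}\partial_{\xi_l}[\sigma_2(A_2B_2)]\partial_{x_j}\partial_{x_l}[\sigma_{-2m}(D^{-2m})]$.

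First I would compute $\sigma_0(A_2B_2),\sigma_1(A_2B_2),\sigma_2(A_2B_2)$ from the symbols of $A_2=c(da^1)c(da^2)[D^2,a^3]$ and $B_2=[D^2,a^4]$ recorded in the Lemma above via the composition formula. Since $[D,a^1][D,a^2]=c(da^1)c(da^2)$ is a zeroth-order multiplication operator sitting on the left, its symbol is $\xi$-independent and is never differentiated in $x$ anywhere in the composition, so the factor $c(da^1)c(da^2)$ simply passes through the entire symbol calculus; this is precisely what makes Part iii) lighter than Parts i)--ii), as no $\partial_x[c(da^i)c(da^j)]$-type terms arise. Next I would insert Lemma \ref{lemkkk} for the symbols of $D^{-2m}$, pass to normal coordinates centred at a fixed $x_0$, and apply the identities (\ref{a1}). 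Here (\ref{a33}) gives $\sigma_{-2m-1}(D^{-2m})(x_0)=0$, killing the second term, and (\ref{a44}) gives $\partial_{x_j}(|\xi|^{-2m})(x_0)\sigma(\xi)=0$, killing the fifth; only the four contributions labelled (iii-1), (iii-3), (iii-4), (iii-6) survive. Note that (iii-4) is \emph{not} killed, because it is the $x$-derivative of $\sigma_{-2m-1}(D^{-2m})$ that enters there, and this does not vanish at $x_0$ even in normal coordinates.

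For each surviving term I would take the fibrewise trace using the Clifford identities $\mathrm{tr}[c(da^1)c(da^2)]=-g(da^1,da^2)\,\mathrm{tr}[\mathrm{id}]$ and the expansion of $\mathrm{tr}[c(e_s)c(e_t)c(da^1)c(da^2)]$ used in step (i-1-e), rewrite the mixed third-order coordinate derivatives $\partial_{x_j}\partial_{x_r}^2(a^4)$ in terms of $\nabla(a^3)[\Delta(a^4)]$ plus a Ricci correction exactly as in step (i-1-f), and integrate the remaining $\xi$-monomials over $|\xi|=1$ with the formulas for $I_{S_n}^{\gamma_1\gamma_2}$ and $I_{S_n}^{\gamma_1\gamma_2\gamma_3\gamma_4}$. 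This yields: (iii-1) gives $\big(-\Delta(a^3)\Delta(a^4)+2\nabla(a^3)[\Delta(a^4)]-R(\nabla(a^3),\nabla(a^4),\nabla(a^2),\nabla(a^1))\big)g(da^1,da^2)$; (iii-3) gives $\big(-\tfrac{1}{6}g(\nabla(a^3),\nabla(a^4))s+\tfrac{2}{3}\sum_\alpha R(\nabla(a^3),e_\alpha,\nabla(a^4),e_\alpha)\big)g(da^1,da^2)$; (iii-4) gives $-\tfrac{8}{3}\sum_\alpha R(\nabla(a^3),e_\alpha,\nabla(a^4),e_\alpha)g(da^1,da^2)$; and (iii-6) gives $+\tfrac{4}{3}\sum_\alpha R(\nabla(a^3),e_\alpha,\nabla(a^4),e_\alpha)g(da^1,da^2)$, each carrying the common factor $\mathrm{tr}[\mathrm{id}]\,\mathrm{area}(S_n)$. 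Adding them, the full-curvature term from (iii-4-b) vanishes by the antisymmetry $R(X,Y,\cdot,\cdot)=-R(Y,X,\cdot,\cdot)$ and the Ricci coefficients combine as $\tfrac{2}{3}-\tfrac{8}{3}+\tfrac{4}{3}=-\tfrac{2}{3}$, so $\int_{|\xi|=1}\mathrm{tr}[\sigma_{-2m}(A_2B_2D^{-2m})](x_0)\sigma(\xi)$ equals the bracket appearing in the statement, times $g(da^1,da^2)\,\mathrm{tr}[\mathrm{id}]\,\mathrm{area}(S_n)$.

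To conclude, since $a^0$ is a multiplication operator one has $\sigma_{-2m}(a^0P_2D^{-2m})=a^0\sigma_{-2m}(P_2D^{-2m})$; multiplying the previous identity by $a^0$, applying (\ref{666}) in the form $\mathrm{Wres}(a^0P_2D^{-2m})=\int_M a^0\big(\int_{|\xi|=1}\mathrm{tr}[\sigma_{-2m}(P_2D^{-2m})](x,\xi)\sigma(\xi)\big)d\mathrm{Vol}_M$, and substituting $\mathrm{tr}[\mathrm{id}]=2^m$ (the fibre dimension of the spinor bundle when $n=2m$) together with $\mathrm{area}(S_n)=\tfrac{2\pi^{n/2}}{\Gamma(n/2)}$, gives the claimed formula for $\int\hspace{-1.05em}-a^0[D,a^1][D,a^2][D^2,a^3][D^2,a^4]D^{-2m}$. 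The main obstacle I anticipate is purely the bookkeeping: keeping the signs and weights of the various $\sum_\alpha R(\nabla(a^3),e_\alpha,\nabla(a^4),e_\alpha)$ terms straight across (iii-1), (iii-3), (iii-4), (iii-6), and reducing the mixed third derivatives of $a^4$ through the normal-coordinate Taylor expansion of the metric so that the residual Ricci coefficient comes out exactly $-\tfrac{2}{3}$.
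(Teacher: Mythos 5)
Your proposal follows essentially the same route as the paper: the same splitting $A_2=[D,a^1][D,a^2][D^2,a^3]$, $B_2=[D^2,a^4]$, the same six-term symbol expansion with the terms (iii-2) and (iii-5) vanishing in normal coordinates via (\ref{a33}) and (\ref{a44}), the same fibrewise Clifford traces and sphere integrals, and your term-by-term contributions from (iii-1), (iii-3), (iii-4), (iii-6) — including the cancellation of the full-curvature term in (iii-4-b) by antisymmetry and the combined Ricci coefficient $\tfrac{2}{3}-\tfrac{8}{3}+\tfrac{4}{3}=-\tfrac{2}{3}$ — agree with the paper's computation. Your observation that the left factor $c(da^1)c(da^2)$, being a $\xi$-independent multiplication operator, passes through the whole symbol calculus is exactly what makes the paper's Part iii) shorter than Parts i)--ii), so I see no gap.
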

$\mathbf{Part~~iv)}$ $P_3=[D^2,a^1][D,a^2][D,a^3][D^2,a^4]$, $[D^2,a^1][D,a^2][D,a^3]:=A_3,~[D^2,a^4]:=B_3$.\\
Similarly, by Lemma 3.1 in \cite{UW} and Lemma \ref{lemma1}, we get the following lemma.
\begin{lem}The symbols of $A$ and $B$ are given
\begin{align}
&\sigma_0(A_3)=[\sum_{j=1}^n\partial_{x_j}(a^1)(\Gamma^j-2\sigma^j)(x)-\sum_{jl=1}^n\partial_{x_j}\partial_{x_l}(a^1)g^{jl}]c(da^2)c(da^3)\nonumber;\\
&\sigma_1(A_3)=-2i\sum_{jl=1}^n\partial_{x_j}(a^1)g^{jl}\xi_lc(da^2)c(da^3)\nonumber;\\
&\sigma_0(B_3)=\sum_{j=1}^n\partial_{x_j}(a^4)(\Gamma^j-2\sigma^j)(x)-\sum_{jl=1}^n\partial_{x_j}\partial_{x_l}(a^4)g^{jl}\nonumber;\\
&\sigma_1(B_3)=-2i\sum_{jl=1}^n\partial_{x_j}(a^4)g^{jl}\xi_l.
\end{align}
\end{lem}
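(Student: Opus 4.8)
The plan is to follow the pattern already carried out in Parts~i)--iii): first compute the complete symbols of the three elementary building blocks $[D^2,a^1]$, $[D,a^2]$, $[D,a^3]$ from Lemma~\ref{lemma1} and Lemma~\ref{lema2}, and then assemble $A_3=[D^2,a^1][D,a^2][D,a^3]$ and $B_3=[D^2,a^4]$ by the composition formula for symbols of products of pseudodifferential operators, retaining only the pieces of orders $0$ and $1$.

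For $B_3=[D^2,a^4]$ nothing new is needed. Since $D^2$ has order $2$, Lemma~\ref{lema2} with $S=D^2$ and $f=a^4$ shows $[D^2,a^4]$ has order $1$, and it expresses $\sigma_1(B_3)$ and $\sigma_0(B_3)$ by pairing the terms $D_x^\beta(a^4)/\beta!$ with $\partial_\xi^\beta$ of the symbols of $D^2$ from Lemma~\ref{lemma1}: the $|\beta|=1$ term against $\sigma_2(D^2)=|\xi|^2$ produces $\sigma_1(B_3)=-2i\sum_{jl}\partial_{x_j}(a^4)g^{jl}\xi_l$, while the $|\beta|=1$ term against $\sigma_1(D^2)=i(\Gamma^\mu-2\sigma^\mu)\xi_\mu$ together with the $|\beta|=2$ term against $\sigma_2(D^2)$ produce $\sigma_0(B_3)=\sum_j\partial_{x_j}(a^4)(\Gamma^j-2\sigma^j)-\sum_{jl}\partial_{x_j}\partial_{x_l}(a^4)g^{jl}$; this is exactly the shape of $\sigma_0(A),\sigma_1(A)$ found in Part~i). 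For the order-$0$ factors, since $D=i\gamma^\mu(e_\mu+\sigma_\mu)$ and $\sigma_\mu$ commutes with multiplication operators, $[D,a^i]=i\gamma^\mu e_\mu(a^i)$ is exactly the (order-$0$) operator $c(da^i)$ of Clifford multiplication by $da^i$, with all lower-order symbols vanishing; hence $[D,a^2][D,a^3]$ is again a pure multiplication operator whose complete symbol is the $\xi$-independent matrix $c(da^2)c(da^3)$ (equivalently, this follows from Lemma~\ref{lema2} with $S=D$, since $\partial_\xi^\beta\sigma_1(D)$ is constant in $\xi$ for $|\beta|=1$ and vanishes for $|\beta|\ge2$).

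It then remains to compose $A_3=[D^2,a^1]\cdot([D,a^2][D,a^3])$. In the product formula $\sigma(A_3)=\sum_{|\alpha|\ge0}\frac{(-i)^{|\alpha|}}{\alpha!}\partial_\xi^\alpha[\sigma([D^2,a^1])]\,\partial_x^\alpha[c(da^2)c(da^3)]$ the second factor has order $0$ and is independent of $\xi$, so the order-$1$ part is produced only by $|\alpha|=0$, giving $\sigma_1(A_3)=\sigma_1([D^2,a^1])\,c(da^2)c(da^3)=-2i\sum_{jl}\partial_{x_j}(a^1)g^{jl}\xi_l\,c(da^2)c(da^3)$; the order-$0$ part is produced by $|\alpha|=0$, contributing $\sigma_0([D^2,a^1])\,c(da^2)c(da^3)$, and by $|\alpha|=1$, contributing the correction $(-i)\sum_j\partial_{\xi_j}[\sigma_1([D^2,a^1])]\,\partial_{x_j}[c(da^2)c(da^3)]=-2\sum_{jl}\partial_{x_l}(a^1)g^{jl}\partial_{x_j}[c(da^2)c(da^3)]$, exactly as in the analogous computation of $\sigma_0(B)$ in Part~i). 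The one point requiring care is that all symbols here are matrix-valued and do \emph{not} commute, so the left-to-right ordering $[D^2,a^1]$, then $[D,a^2]$, then $[D,a^3]$ must be preserved in every term; the Clifford relation $c(u)c(v)+c(v)c(u)=-2g(u,v)$ plays no role at this stage and is invoked only later, when traces are taken. Apart from this, the derivation is the same routine bookkeeping already performed in Parts~i)--iii).
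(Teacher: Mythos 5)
Your method is exactly the one the paper (implicitly) uses -- the paper offers no written proof for this lemma beyond ``similarly, by Lemma 3.1 in [UW] and Lemma 2.1'' -- and each individual step you take is sound: $[D^2,a^i]$ via Lemma 2.2 applied to the symbols of Lemma 2.1, the identification of $[D,a^i]$ with the $\xi$-independent multiplication operator $c(da^i)$, and the composition formula with the order-one factor on the left. However, note that your (correct) computation of the order-zero part of $A_3=[D^2,a^1]\cdot\big([D,a^2][D,a^3]\big)$ produces
\begin{align*}
\sigma_0(A_3)=\Big[\sum_{j}\partial_{x_j}(a^1)(\Gamma^j-2\sigma^j)-\sum_{jl}\partial_{x_j}\partial_{x_l}(a^1)g^{jl}\Big]c(da^2)c(da^3)
-2\sum_{jl}\partial_{x_l}(a^1)g^{jl}\,\partial_{x_j}\big[c(da^2)c(da^3)\big],
\end{align*}
whereas the lemma as printed drops the second, $|\alpha|=1$ cross term. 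That term cannot legitimately vanish: the situation is structurally identical to $\sigma_0(B)$ in Part i) and $\sigma_0(B_1)$ in Part ii), where the analogous correction is retained, and it is only absent in Part iii) because there the order-one commutator sits on the right of the $\xi$-independent factors. Indeed the paper's own subsequent expansion of $\sigma_0(A_3B_3)\sigma_{-2m}(D^{-2m})(x_0)$ contains a contribution of exactly this shape (the term $2\sum_{kp}\partial^2_{x_k}(\cdot)\partial_{x_p}(\cdot)\partial_{x_p}[c(da^2)c(da^3)]$), so the omission in the lemma statement appears to be a transcription slip rather than a different convention. In short: your derivation is right, but it establishes a corrected form of the lemma, not literally the displayed one; you should state the extra term explicitly and track it through the later trace computation rather than matching the printed formula.
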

Next, we calculate each term of  $\int_{S^*M}{\rm tr}[\sigma_{-2m}(A_3B_3D^{-2m})](x,\xi)$ separately.\\
$\mathbf{(iv-1)}$ For $\sigma_0(A_3B_3)\sigma_{-2m}(D^{-2m})$
 \begin{align}
&\sigma_0(A_3B_3)\sigma_{-2m}(D^{-2m})(x_0)\nonumber\\
&=\sum_{kr}\partial_{x_k}^2(a^1)\partial_{x_r}^2(a^4)c(da^2)c(da^3)|\xi|^{-2m}+2\sum_{kp}\partial_{x_k}^2(a^1)\partial_{x_p}(a^4)\partial_{x_p}[c(da^2)c(da^3)]|\xi|^{-2m}\nonumber\\
&-\frac{4}{3}\sum_{jr\alpha}\partial_{x_j}(a^1)\partial_{x_r}(a^4)R_{j\alpha r\alpha}(x_0)c(da^2)c(da^3)|\xi|^{-2m}+\frac{1}{2}\sum_{jrst}\partial_{x_j}(a^1)\partial_{x_r}(a^4)R_{jrts}(x_0)c(e_s)c(e_t)c(da^2)c(da^3)|\xi|^{-2m}\nonumber\\
&+2\sum_{jr}\partial_{x_j}(a^1)\partial_{x_j}\partial_{x_r}^2(a^4)c(da^2)c(da^3)|\xi|^{-2m}.
\end{align}
$\mathbf{(iv-1-a)}$
\begin{align*}
\int_{|\xi|=1}{\rm tr}\bigg(\sum_{kr}\partial_{x_k}^2(a^1)\partial_{x_r}^2(a^4)c(da^2)c(da^3)|\xi|^{-2m}\bigg)\sigma(\xi)&=-\sum_{kr}\partial_{x_k}^2(a^1)\partial_{x_r}^2(a^4)g(da^2,da^3){\rm tr}[id]area(S_n)\nonumber\\
&=-\Delta(a^1)\Delta(a^4)g(da^2,da^3){\rm tr}[id]area(S_n).
\end{align*}
$\mathbf{(iv-1-b)}$
\begin{align*}
\int_{|\xi|=1}{\rm tr}\bigg(2\sum_{kp}\partial_{x_k}^2(a^1)\partial_{x_p}(a^4)\partial_{x_p}[c(da^2)c(da^3)]|\xi|^{-2m}\bigg)\sigma(\xi)&=-2\sum_{kp}\partial_{x_k}^2(a^1)\partial_{x_p}(a^4)\partial_{x_p}[g(da^2,da^3)]{\rm tr}[id]area(S_n)\nonumber\\
&=-2\Delta(a^1)g(\nabla (a^4),\nabla g(da^2,da^3)){\rm tr}[id]area(S_n).
\end{align*}
$\mathbf{(iv-1-c)}$
\begin{align*}
&\int_{|\xi|=1}{\rm tr}\bigg(-\frac{4}{3}\sum_{jr\alpha}\partial_{x_j}(a^1)\partial_{x_r}(a^4)R_{j\alpha r\alpha}(x_0)c(da^2)c(da^3)|\xi|^{-2m}\bigg)\sigma(\xi)\nonumber\\
&=\frac{4}{3}\sum_{jr\alpha}\partial_{x_j}(a^1)\partial_{x_r}(a^4)R_{j\alpha r\alpha}(x_0)g(da^2,da^3){\rm tr}[id]area(S_n)\nonumber\\
&=\frac{4}{3}\sum_\alpha R(\nabla a^1,e_\alpha,\nabla a^4,e_\alpha)g(da^2,da^3){\rm tr}[id]area(S_n).
\end{align*}
$\mathbf{(iv-1-d)}$
\begin{align*}
&\int_{|\xi|=1}{\rm tr}\bigg(\frac{1}{2}\sum_{jr}\partial_{x_j}(a^1)\partial_{x_r}(a^4)\sum_{st}R_{jrts}(x_0)c(e_s)c(e_t)c(da^2)(da^3)|\xi|^{-2m}\bigg)\sigma(\xi)\nonumber\\
&=-\sum_{jrts}\partial_{x_j}(a^1)\partial_{x_r}(a^4)R_{jrts}(x_0)e_s(a^2)e_t(a^3){\rm tr}[id]area(S_n)\nonumber\\
&=-R(\nabla(a^1),\nabla(a^4),\nabla(a^3),\nabla(a^2)){\rm tr}[id]area(S_n).
\end{align*}
$\mathbf{(iv-1-e)}$
\begin{align*}
&\int_{|\xi|=1}{\rm tr}\bigg(2\sum_{jr}\partial_{x_j}(a^1)\partial_{x_j}\partial_{x_r}^2(a^4)c(da^2)c(da^3)|\xi|^{-2m}\bigg)\sigma(\xi)\nonumber\\
&=-2\sum_{jr}\partial_{x_j}(a^1)\partial_{x_j}\partial_{x_r}^2(a^4)g(da^2,da^3){\rm tr}[id]area(S_n)\nonumber\\
&=\Big(2\nabla(a^1)[\Delta(a^4)]-\frac{4}{3}\sum_\alpha R(\nabla (a^1),e_\alpha,\nabla (a^4),e_\alpha)\Big)g(da^2,da^3){\rm tr}[id]area(S_n).
\end{align*}
Therefore, we get
\begin{align*}
&\int_{|\xi|=1}{\rm tr}\bigg(\sigma_0(A_3B_3)\sigma_{-2m}(D^{-2m})(x_0)\bigg)\sigma(\xi)\nonumber\\
&=\Big(-\Delta(a^1)\Delta(a^4)g(da^2,da^3)-2\Delta(a^1)g(\nabla (a^4),\nabla g(da^2,da^3))-R(\nabla(a^1),\nabla(a^4),\nabla(a^3),\nabla(a^2))\nonumber\\
&+2\nabla(a^1)[\Delta(a^4)]g(da^2,da^3)\Big){\rm tr}[id]area(S_n).
\end{align*}
$\mathbf{(iv-2)}$ For $\sigma_1(A_3B_3)\sigma_{-2m-1}(D^{-2m})$:\\
By (\ref{a33}), we get
\begin{align*}
\int_{|\xi|=1}{\rm tr}\bigg(\sigma_1(A_3B_3)\sigma_{-2m-1}(D^{-2m})(x_0)\bigg)\sigma(\xi)=0.
\end{align*}
$\mathbf{(iv-3)}$ For $\sigma_2(A_3B_3)\sigma_{-2m-2}(D^{-2m})$:\\
 \begin{align}\label{A1}
&\sigma_2(A_3B_3)\sigma_{-2m-2}(D^{-2m})(x_0)\nonumber\\
&=m\sum_{jl}\partial_{x_j}(a^1)\partial_{x_l}(a^4)\xi_j\xi_lc(da^2)c(da^3)|\xi|^{-2m-2}s\nonumber\\
&-\frac{4m(m+1)}{3}\sum_{jl}\partial_{x_j}(a^1)\partial_{x_l}(a^4)\sum_{\mu\nu\alpha}R_{\mu\alpha\nu\alpha}(x_0)\xi_j\xi_l\xi_\mu\xi_\nu c(da^2)c(da^3)|\xi|^{-2m-4}.
\end{align}
$\mathbf{(iv-3-a)}$
\begin{align*}
&\int_{|\xi|=1}m{\rm tr}\bigg(\sum_{jl}\partial_{x_j}(a^1)\partial_{x_l}(a^4)\xi_j\xi_lc(da^2)c(da^3)|\xi|^{-2m-2}s\bigg)\sigma(\xi)\nonumber\\
&=-\frac{1}{2}\sum_{j}\partial_{x_j}(a^1)\partial_{x_j}(a^4)area(S_n)g(da^2,da^3)s{\rm tr}[id]\nonumber\\
&=-\frac{1}{2}g(\nabla(a^1),\nabla(a^4))area(S_n)g(da^2,da^3)s{\rm tr}[id].
\end{align*}
$\mathbf{(iv-3-b)}$
\begin{align*}
&\int_{|\xi|=1}-\frac{4m(m+1)}{3}{\rm tr}\bigg(\sum_{jl}\partial_{x_j}(a^1)\partial_{x_l}(a^4)\sum_{\mu\nu\alpha}R_{\mu\alpha\nu\alpha}(x_0)\xi_j\xi_l\xi_\mu\xi_\nu c(da^2)c(da^3)|\xi|^{-2m-4}\bigg)\sigma(\xi)\nonumber\\
&=\frac{1}{3}\bigg(\sum_{j\mu\alpha}\partial_{x_j}(a^1)\partial_{x_j}(a^4)R_{\mu\alpha\mu\alpha}(x_0)+2\sum_{jl \alpha}\partial_{x_j}(a^1)\partial_{x_l}(a^4)R_{j\alpha l\alpha}(x_0)\bigg)area(S_n)g(da^2,da^3){\rm tr}[id]\nonumber\\
&=\frac{1}{3}\bigg(\sum_{\mu\alpha}g(\nabla(a^1),\nabla(a^4))R_{\mu\alpha\mu\alpha}(x_0)+2\sum_{\alpha }R(\nabla(a^1),e_\alpha,\nabla(a^4),e_\alpha)\bigg)area(S_n)g(da^2,da^3){\rm tr}[id]\nonumber\\
&=\frac{1}{3}\bigg(g(\nabla(a^1),\nabla(a^4))s+2\sum_{\alpha }R(\nabla(a^1),e_\alpha,\nabla(a^4),e_\alpha)\bigg)area(S_n)g(da^2,da^3){\rm tr}[id].
\end{align*}
Therefore, we get
\begin{align*}
&\int_{|\xi|=1}{\rm tr}\bigg(\sigma_2(A_3B_3)\sigma_{-2m-2}(D^{-2m})(x_0)\bigg)\sigma(\xi)\nonumber\\
&=\bigg(-\frac{1}{6}g(\nabla(a^1),\nabla(a^4))s+\frac{2}{3}\sum_{\alpha }R(\nabla(a^1),e_\alpha,\nabla(a^4),e_\alpha)\bigg)area(S_n)g(da^2,da^3){\rm tr}[id].
\end{align*}
$\mathbf{(iv-4)}$ For $-i\sum_{j=1}^n\partial_{\xi_j}[\sigma_2(A_3B_3)]\partial_{x_j}[\sigma_{-2m-1}(D^{-2m})]$:
 \begin{align*}
&-i\sum_{j=1}^n\partial_{\xi_j}[\sigma_2(A_3B_3)]\partial_{x_j}[\sigma_{-2m-1}(D^{-2m})](x_0)\nonumber\\
&=\frac{8m}{3}\sum_{jplk\alpha}\partial_{x_p}(a^1)\partial_{x_l}(a^4)[\partial_{\xi_j}(\xi_p)\xi_l\xi_k+\partial_{\xi_j}(\xi_l)\xi_p\xi_k]R_{j\alpha k\alpha}(x_0)c(da^2)c(da^3)|\xi|^{-2m-2}\nonumber\\
&-m\sum_{jplkts}\partial_{x_p}(a^1)\partial_{x_l}(a^4)[\partial_{\xi_j}(\xi_p)\xi_l\xi_k+\partial_{\xi_j}(\xi_l)\xi_p\xi_k]R_{jkts}(x_0)c(da^2)c(da^3)c(e_s)c(e_t)|\xi|^{-2m-2}\nonumber\\
&+\frac{16}{3}m\sum_{jplr\alpha\beta}\partial_{x_p}(a^1)\partial_{x_l}(a^4)R_{\alpha j\beta r}(x_0)[\partial_{\xi_j}(\xi_p)\xi_l\xi_r\xi_\alpha\xi_\beta+\partial_{\xi_j}(\xi_l)\xi_p\xi_r\xi_\alpha\xi_\beta]c(da^2)c(da^3)|\xi|^{-2m-4}\nonumber\\
&+\frac{16(m-k-1)}{3}\sum_{k=0}^{m-2}\sum_{\mu=1}^{2m}\sum_{jplr\alpha\beta}\partial_{x_p}(a^1)\partial_{x_l}(a^4)R_{\alpha j\beta \mu}(x_0)[\partial_{\xi_j}(\xi_p)\xi_l\xi_\mu\xi_\alpha\xi_\beta+\partial_{\xi_j}(\xi_l)\xi_p\xi_\mu\xi_\alpha\xi_\beta]c(da^2)c(da^3)|\xi|^{-2m-4}.
\end{align*}
$\mathbf{(iv-4-a)}$
\begin{align*}
&\int_{|\xi|=1}{\rm tr}\bigg(\frac{8m}{3}\sum_{jplk\alpha}\partial_{x_p}(a^1)\partial_{x_l}(a^4)[\partial_{\xi_j}(\xi_p)\xi_l\xi_k+\partial_{\xi_j}(\xi_l)\xi_p\xi_k]R_{j\alpha k\alpha}(x_0)c(da^2)c(da^3)|\xi|^{-2m-2}\bigg)\sigma(\xi)\nonumber\\
&=-\frac{8}{3}\sum_{pk\alpha}\partial_{x_p}(a^1)\partial_{x_k}(a^4)R_{p\alpha k\alpha}(x_0) area(S_n)g(da^2,da^3){\rm tr}[id]\nonumber\\
&=-\frac{8}{3}\sum_{\alpha}R(\nabla(a^1),e_\alpha,\nabla(a^4),e_\alpha) area(S_n)g(da^2,da^3){\rm tr}[id].
\end{align*}
$\mathbf{(iv-4-b)}$
\begin{align*}
&\int_{|\xi|=1}{\rm tr}\bigg(-m\sum_{jplkts}\partial_{x_p}(a^1)\partial_{x_l}(a^4)[\partial_{\xi_j}(\xi_p)\xi_l\xi_k+\partial_{\xi_j}(\xi_l)\xi_p\xi_k]R_{jkts}(x_0)c(da^2)c(da^3)c(e_s)c(e_t)|\xi|^{-2m-2}\bigg)\sigma(\xi)\nonumber\\
&=0.
\end{align*}
$\mathbf{(iv-4-c)}$
\begin{align*}
&\int_{|\xi|=1}{\rm tr}\bigg(\sum_{jplr\alpha\beta}\partial_{x_p}(a^1)\partial_{x_l}(a^4)R_{\alpha j\beta r}(x_0)[\partial_{\xi_j}(\xi_p)\xi_l\xi_r\xi_\alpha\xi_\beta+\partial_{\xi_j}(\xi_l)\xi_p\xi_r\xi_\alpha\xi_\beta]c(da^2)c(da^3)|\xi|^{-2m-4}\bigg)\sigma(\xi)\nonumber\\
&=-\bigg(\sum_{jlr\alpha\beta}\partial_{x_j}(a^1)\partial_{x_l}(a^4)R_{\alpha j\beta r}(x_0)I_{S_n}^{lr\alpha\beta}+\sum_{jr\alpha\beta}\partial_{x_p}(a^1)\partial_{x_j}(a^4)R_{\alpha j\beta r}(x_0)I_{S_n}^{pr\alpha\beta}\bigg)g(da^2,da^3){\rm tr}[id]\nonumber\\
&=0.
\end{align*}
$\mathbf{(iv-4-d)}$\\
Similarly, we get
\begin{align*}
&\int_{|\xi|=1}{\rm tr}\bigg(\sum_{jplr\alpha\beta}\partial_{x_p}(a^1)\partial_{x_l}(a^4)R_{\alpha j\beta \mu}(x_0)[\partial_{\xi_j}(\xi_p)\xi_l\xi_\mu\xi_\alpha\xi_\beta+\partial_{\xi_j}(\xi_l)\xi_p\xi_\mu\xi_\alpha\xi_\beta]c(da^2)c(da^3)|\xi|^{-2m-4}\bigg)\sigma(\xi)=0.
\end{align*}
Therefore, we get
\begin{align*}
&\int_{|\xi|=1}{\rm tr}\bigg(-i\sum_{j=1}^n\partial_{\xi_j}[\sigma_2(A_3B_3)]\partial_{x_j}[\sigma_{-2m-1}(D^{-2m})](x_0)\bigg)\sigma(\xi)\nonumber\\
&=-\frac{8}{3}\sum_{\alpha}R(\nabla(a^1),e_\alpha,\nabla(a^4),e_\alpha) area(S_n)g(da^2,da^3){\rm tr}[id].
\end{align*}
$\mathbf{(iv-5)}$ For $-i\Sigma_{j=1}^n\partial_{\xi_j}[\sigma_1(A_3B_3)]\partial_{x_j}[\sigma_{-2m}(D^{-2m})]$:\\
By (\ref{a44}), we get
\begin{align*}
\int_{|\xi|=1}{\rm tr}\bigg(-i\Sigma_{j=1}^n\partial_{\xi_j}[\sigma_1(A_3B_3)]\partial_{x_j}[\sigma_{-2m}(D^{-2m})]\bigg)(x_0)\sigma(\xi)=0.
\end{align*}
$\mathbf{(iv-6)}$ For $-\frac{1}{2}\sum_{jl}\partial_{\xi_j}\partial_{\xi_l}[\sigma_2(A_3B_3)]\partial_{x_j}\partial_{x_l}[\sigma_{-2m}(D^{-2m})]$:
\begin{align*}
&\int_{|\xi|=1}{\rm tr}\bigg(-\frac{4}{3}m\sum_{jlrs\alpha\beta}\partial_{x_r}(a^1)\partial_{x_s}(a^4)R_{\alpha j\beta l}(x_0)[\partial_{\xi_l}(\xi_r)\partial_{\xi_j}(\xi_s)+\partial_{\xi_l}(\xi_s)\partial_{\xi_j}(\xi_r)]\xi_\alpha\xi_\beta c(da^2)c(da^3)|\xi|^{-2m-2}\bigg)\sigma(\xi)\nonumber\\
&=\frac{4}{3}\sum_{jl\alpha}\partial_{x_j}(a^1)\partial_{x_l}(a^4)R_{\alpha j\alpha l}(x_0)area(S_n)g(da^2,da^3){\rm tr}[id]\nonumber\\
&=\frac{4}{3}\sum_{\alpha}R(\nabla(a^1),e_\alpha,\nabla(a^4),e_\alpha) area(S_n)g(da^2,da^3){\rm tr}[id].
\end{align*}
Therefore, we get
\begin{align*}
&\int_{|\xi|=1}{\rm tr}\bigg(-\frac{1}{2}\sum_{jl}\partial_{\xi_j}\partial_{\xi_l}[\sigma_2(A_3B_3)]\partial_{x_j}\partial_{x_l}[\sigma_{-2m}(D^{-2m})](x_0)\bigg)\sigma(\xi)\nonumber\\
&=\frac{4}{3}\sum_{\alpha}R(\nabla(a^1),e_\alpha,\nabla(a^4),e_\alpha)area(S_n)g(da^2,da^3){\rm tr}[id].
\end{align*}
Finally, we obtain
\begin{thm}
Let $P_3=[D^2,a^1][D,a^2][D,a^3][D^2,a^4]$, we have the following noncommutative integral of $P_3$ based on the Wodzicki residue
\begin{align*}
&\int\hspace{-1.05em}-a^0[D^2,a^1][D,a^2][D,a^3][D^2,a^4]D^{-2m}\nonumber\\
&=2^m\frac{2\pi^{\frac{n}{2}}}{\Gamma(\frac{n}{2})}\int_Ma_0\bigg(-\Delta(a^1)\Delta(a^4)g(da^2,da^3)-2\Delta(a^1)g(\nabla (a^4),\nabla g(da^2,da^3))-R(\nabla(a^1),\nabla(a^4),\nabla(a^3),\nabla(a^2))\nonumber\\
&+\Big(2\nabla(a^1)[\Delta(a^4)]-\frac{2}{3}\sum_{\alpha }R(\nabla(a^1),e_\alpha,\nabla(a^4),e_\alpha)
-\frac{1}{6}g(\nabla(a^1),\nabla(a^4))s\Big)g(da^2,da^3)\bigg)d{\rm Vol_M}.
\end{align*}
\end{thm}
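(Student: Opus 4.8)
The plan is to obtain the stated formula by collecting the six contributions \textbf{(iv-1)}--\textbf{(iv-6)} computed above and inserting them into the trace definition of the noncommutative residue. By (\ref{666}),
\[
\int\hspace{-1.05em}-a^0P_3D^{-2m}=\int_M a^0\!\left(\int_{|\xi|=1}{\rm tr}\big[\sigma_{-2m}(A_3B_3D^{-2m})\big](x_0,\xi)\,\sigma(\xi)\right)d{\rm Vol_M},
\]
so it suffices to identify the inner cosphere integral at an arbitrary point $x_0$, working in normal coordinates centred at $x_0$.

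First I would recall that the composition formula expands $\sigma_{-2m}(A_3B_3D^{-2m})$ into exactly the six summands $\sigma_0(A_3B_3)\sigma_{-2m}(D^{-2m})$, $\sigma_1(A_3B_3)\sigma_{-2m-1}(D^{-2m})$, $\sigma_2(A_3B_3)\sigma_{-2m-2}(D^{-2m})$ together with the three mixed $\partial_\xi\partial_x$ terms, exactly as for $AB$ in Section~\ref{section:3}. The symbols $\sigma_j(A_3)$ and $\sigma_j(B_3)$ are those established in the lemma above, and the symbols $\sigma_{-2m-j}(D^{-2m})$ come from Lemma~\ref{lemkkk}; at $x_0$ the normal-coordinate identities (\ref{a1}), (\ref{a33}), (\ref{a44}) give $\sigma_{-2m-1}(D^{-2m})(x_0)=0$ and $\partial_{x_j}(|\xi|^{-2m})(x_0)\,\sigma(\xi)=0$, which makes the summands \textbf{(iv-2)} and \textbf{(iv-5)} vanish and removes $\partial_{x_j}[\sigma_{-2m}(D^{-2m})]$ wherever it occurs.

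Next I would substitute, take the fibre traces via the Clifford relations — so that ${\rm tr}[c(da)c(db)]=-g(da,db){\rm tr}[{\rm id}]$ and the four-factor trace reduces as in \textbf{(iv-1-d)} — and integrate over $|\xi|=1$ using the monomial sphere integrals $I_{S_n}^{\gamma_1\cdots}$; this is exactly what \textbf{(iv-1)}--\textbf{(iv-6)} record. The curvature terms enter only through the normal-coordinate expansions of $\partial_{x_k}(\Gamma^\mu)(x_0)$ and $\partial_{x_k}(\sigma^\mu)(x_0)$ in (\ref{a1}): the $c(e_s)c(e_t)$ part of the latter yields the Riemann term $-R(\nabla(a^1),\nabla(a^4),\nabla(a^3),\nabla(a^2))$ of \textbf{(iv-1-d)}, while the antisymmetries of $R$ kill \textbf{(iv-4-b)}, \textbf{(iv-4-c)} and \textbf{(iv-4-d)}. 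Adding the four surviving blocks, the coefficient of $\sum_\alpha R(\nabla(a^1),e_\alpha,\nabla(a^4),e_\alpha)\,g(da^2,da^3)$ works out to $0$ from \textbf{(iv-1)} (the $+\tfrac43$ of \textbf{(iv-1-c)} cancels the $-\tfrac43$ hidden in \textbf{(iv-1-e)}), $+\tfrac23$ from \textbf{(iv-3)}, $-\tfrac83$ from \textbf{(iv-4)} and $+\tfrac43$ from \textbf{(iv-6)}, i.e.\ $-\tfrac23$; the Laplacian, mixed-gradient and scalar-curvature terms are read off directly, and every contribution carries the common factor ${\rm tr}[{\rm id}]\,{\rm area}(S_n)$.

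Finally I would substitute ${\rm tr}[{\rm id}]=2^m$ (the rank of the spinor bundle over the $n=2m$ dimensional manifold) and ${\rm area}(S_n)=\tfrac{2\pi^{n/2}}{\Gamma(n/2)}$, factor out $2^m\tfrac{2\pi^{n/2}}{\Gamma(n/2)}$, multiply by $a^0=a_0$, and integrate the resulting pointwise tensorial identity — valid at every point, not merely at $x_0$ — over $M$ against $d{\rm Vol_M}$, which gives the asserted formula. The main obstacle is purely combinatorial bookkeeping: one must track the contraction $-\tfrac43\sum_\alpha R(\nabla(a^1),e_\alpha,\nabla(a^4),e_\alpha)$ that appears when $\partial_{x_j}\partial_{x_r}^2(a^4)$ is rewritten in normal coordinates inside the $\sigma_0$ block and verify that it cancels the term produced there by $\partial_{x_k}(\Gamma^\mu)(x_0)$, and then correctly tally the surviving $\tfrac23$, $-\tfrac83$ and $\tfrac43$ from \textbf{(iv-3)}, \textbf{(iv-4)} and \textbf{(iv-6)}; no analytic ingredient beyond Lemma~\ref{lemkkk} and the sphere integrals is needed.
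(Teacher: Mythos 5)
Your proposal reproduces the paper's own argument: the same six-term expansion of $\sigma_{-2m}(A_3B_3D^{-2m})$ via the composition formula, the same use of Lemma \ref{lemkkk} and the normal-coordinate identities (\ref{a1}), (\ref{a33}), (\ref{a44}) to kill the \textbf{(iv-2)} and \textbf{(iv-5)} blocks, the same Clifford traces and sphere integrals, and the same tally of curvature coefficients ($0+\tfrac23-\tfrac83+\tfrac43=-\tfrac23$) with the common factor ${\rm tr}[{\rm id}]\,{\rm area}(S_n)=2^m\tfrac{2\pi^{n/2}}{\Gamma(n/2)}$. This is correct and essentially identical to the paper's proof.
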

In the same way, we take four different parts for $A$ and $B$, and then by further calculation, we get the following result.
\begin{align}
&\int\hspace{-1.05em}-a^0[D^2,a^1][D^2,a^2][D,a^3][D,a^4]D^{-2m}-\int\hspace{-1.05em}-a^0[D,a^1][D^2,a^2][D^2,a^3][D,a^4]D^{-2m}\nonumber\\
&+\int\hspace{-1.05em}-a^0[D,a^1][D,a^2][D^2,a^3][D^2,a^4]D^{-2m}-\int\hspace{-1.05em}-a^0[D^2,a^1][D,a^2][D,a^3][D^2,a^4]D^{-2m}\nonumber\\
&=2^m\frac{2\pi^{\frac{n}{2}}}{\Gamma(\frac{n}{2})}\int_Ma_0\bigg\{
-2\Delta(a^1)g(\nabla (a^2),\nabla g(a^3,a^4))-4\nabla(a^1)\nabla(a^2)[g(da^3,da^4)]-2R(\nabla(a^1),\nabla(a^2),\nabla(a^4),\nabla(a^3))\nonumber\\
&+\bigg(2\nabla(a^1)[\Delta(a^2)]-\Delta(a^1)\Delta(a^2)-\frac{1}{6}g(\nabla(a^1),\nabla(a^2))s-\frac{2}{3}\sum_{\alpha}R(\nabla(a^1),e_\alpha,\nabla(a^2),e_\alpha)\bigg)g(da^3,da^4)\nonumber\\
&+\Delta(a^2)\Delta(a^3)g(da^1,da^4)-2\Delta(a^2)g(\nabla(a^1),\nabla_{\nabla(a^3)}\nabla(a^4))+2R(\nabla(a^2),\nabla(a^3),\nabla(a^4),\nabla(a^1))\nonumber\\
&-2\nabla(a^2)[\Delta(a^3)]g(da^1,da^4)-2\Delta(a^3)g(\nabla(a^1),\nabla_{\nabla(a^2)}\nabla(a^4))+4g(\nabla(a^1),\nabla_{\nabla(a^2)}\nabla_{\nabla(a^3)}\nabla(a^4))\nonumber\\
&-4g(\nabla_{\nabla(a^1)}\nabla(a^4),\nabla_{\nabla(a^2)}\nabla(a^3))-2R(\nabla(a^1),\nabla(a^3),\nabla(a^2),\nabla(a^4))-4g(\nabla(a^1),\nabla_{\nabla_{\nabla(a^2)}\nabla(a^3)}\nabla(a^4))\nonumber\\
&+\frac{1}{6}g(\nabla(a^2),\nabla(a^3))g(da^1,da^4)s+\frac{2}{3}\sum_{\alpha }R(\nabla(a^2),e_\alpha,\nabla(a^3),e_\alpha)g(da^1,da^4)+\bigg[2\nabla(a^3)[\Delta(a^4)]\nonumber\\
&-\Delta(a^3)\Delta(a^4)-\frac{1}{6}g(\nabla(a^3),\nabla(a^4))s-\frac{2}{3}\sum_{\alpha }R(\nabla(a^3),e_\alpha,\nabla(a^4),e_\alpha)(x_0)\bigg)g(da^1,da^2)\bigg]\nonumber\\
&+\Delta(a^1)\Delta(a^4)g(da^2,da^3)+2\Delta(a^1)g(\nabla (a^4),\nabla g(da^2,da^3))-2\nabla(a^1)[\Delta(a^4)]g(da^2,da^3)\nonumber\\
&-\bigg(-\frac{2}{3}\sum_{\alpha }R(\nabla(a^1),e_\alpha,\nabla(a^4),e_\alpha)
-\frac{1}{6}g(\nabla(a^1),\nabla(a^4))s\bigg)g(da^2,da^3)\bigg\}d{\rm Vol_M}.
\end{align}
When $n=6,~~m=3$, we have $area(S_6)=\pi^3,$ then we get the following theorem.
\begin{cor}
Let $(\mathcal{A},\mathcal{H},D)$ be the spectral triple associated to a compact spin Riemannian 6-dimensional manifold $M$. Then
a Connes-Chamseddine cycle is given
 \begin{align}
& \rho(a^0,a^1,a^2,a^3,a^4)\nonumber\\
&=8\pi^3\int_Ma_0\bigg\{
-2\Delta(a^1)g(\nabla (a^2),\nabla g(a^3,a^4))-4\nabla(a^1)\nabla(a^2)[g(da^3,da^4)]-2R(\nabla(a^2),\nabla(a^3),\nabla(a^4),\nabla(a^1))\nonumber\\
&+\bigg(2\nabla(a^1)[\Delta(a^2)]-\Delta(a^1)\Delta(a^2)-\frac{1}{6}g(\nabla(a^1),\nabla(a^2))s-\frac{2}{3}\sum_{\alpha}R(\nabla(a^1),e_\alpha,\nabla(a^2),e_\alpha)\bigg)g(da^3,da^4)\nonumber\\
&+\Delta(a^2)\Delta(a^3)g(da^1,da^4)-2\Delta(a^2)g(\nabla(a^1),\nabla_{\nabla(a^3)}\nabla(a^4))+2R(\nabla(a^2),\nabla(a^3),\nabla(a^4),\nabla(a^1))\nonumber\\
&-2\nabla(a^2)[\Delta(a^3)]g(da^1,da^4)-2\Delta(a^3)g(\nabla(a^1),\nabla_{\nabla(a^2)}\nabla(a^4))+4g(\nabla(a^1),\nabla_{\nabla(a^2)}\nabla_{\nabla(a^3)}\nabla(a^4))\nonumber\\
&-4g(\nabla_{\nabla(a_1)}\nabla(a_4),\nabla_{\nabla(a_2)}\nabla(a_3))-2R(\nabla(a^1),\nabla(a^3),\nabla(a^2),\nabla(a^4))-4g(\nabla(a^1),\nabla_{\nabla_{\nabla(a^2)}\nabla(a^3)}\nabla(a^4))\nonumber\\
&+\frac{1}{6}g(\nabla(a^2),\nabla(a^3))g(da^1,da^4)s+\frac{2}{3}\sum_{\alpha }R(\nabla(a^2),e_\alpha,\nabla(a^3),e_\alpha)g(da^1,da^4)+\bigg[2\nabla(a^3)[\Delta(a^4)]\nonumber\\
&-\Delta(a^3)\Delta(a^4)-\frac{1}{6}g(\nabla(a^3),\nabla(a^4))s-\frac{2}{3}\sum_{\alpha }R(\nabla(a^3),e_\alpha,\nabla(a^4),e_\alpha)(x_0)\bigg)g(da^1,da^2)\bigg]\nonumber\\
&+\Delta(a^1)\Delta(a^4)g(da^2,da^3)+2\Delta(a^1)g(\nabla (a^4),\nabla g(da^2,da^3))-2\nabla(a^1)[\Delta(a^4)]g(da^2,da^3)\nonumber\\
&-\bigg(-\frac{2}{3}\sum_{\alpha }R(\nabla(a^1),e_\alpha,\nabla(a^4),e_\alpha)
-\frac{1}{6}g(\nabla(a^1),\nabla(a^4))s\bigg)g(da^2,da^3)\bigg\}d{\rm Vol_M}.
 \end{align}
 \end{cor}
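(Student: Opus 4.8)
The plan is to derive the Corollary as the $n=6$, $m=3$ specialization of the defining identity (\ref{a2}), feeding in the four Theorems already established for Parts i)--iv). The first step is to dispose of the factor $a^0$: since $a^0$ acts by multiplication and its symbol is independent of $\xi$, every term of the composition expansion for $\sigma(a^0P)$ with multi-index $|\alpha|\ge 1$ vanishes, so $\sigma_{-2m}(a^0PD^{-2m})=a^0\,\sigma_{-2m}(PD^{-2m})$ for each of $P,P_1,P_2,P_3$. Hence $\int\hspace{-1.05em}-\,a^0PD^{-2m}=\int_M a^0\big(\int_{|\xi|=1}{\rm tr}[\sigma_{-2m}(PD^{-2m})]\,\sigma(\xi)\big)d{\rm Vol}_M$, which is precisely what the four Theorems compute, with the common prefactor $2^m\cdot 2\pi^{n/2}/\Gamma(n/2)$ coming from ${\rm tr}[{\rm id}]=2^m$ on the rank-$2^m$ spinor bundle together with ${\rm area}(S_n)=2\pi^{n/2}/\Gamma(n/2)$.

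Next I would match the four summands of (\ref{a2}) with the four Parts --- $a^0[D^2,a^1][D^2,a^2][D,a^3][D,a^4]D^{-6}$ with Part i), $a^0[D,a^1][D^2,a^2][D^2,a^3][D,a^4]D^{-6}$ with Part ii), $a^0[D,a^1][D,a^2][D^2,a^3][D^2,a^4]D^{-6}$ with Part iii), $a^0[D^2,a^1][D,a^2][D,a^3][D^2,a^4]D^{-6}$ with Part iv), appearing with signs $+,-,+,-$. Substituting the four Theorems and adding the resulting integrands over $M$ yields the long combination identity displayed just above the Corollary; in that step one collects the repeated $\Delta$-, $\nabla$-, and Riemann-curvature contributions, using the symmetries of $R$ and $\sum_{\mu\alpha}R_{\mu\alpha\mu\alpha}=s$ to reduce the traced contractions. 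Finally I would set $n=6$ and $m=3$: then $\Gamma(3)=2$, so the prefactor $2^3\cdot 2\pi^{3}/\Gamma(3)$ collapses to $8\pi^3$ (equivalently ${\rm tr}[{\rm id}]=8$ and ${\rm area}(S_6)=\pi^3$), and reading off (\ref{a2}) produces the asserted formula for $\rho(a^0,a^1,a^2,a^3,a^4)$.

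The conceptual work all lies upstream of this assembly: establishing the general-dimensional symbols $\sigma_{-2m-j}(D^{-2m})$ of Lemma \ref{lemkkk} and the product symbols of the intervening Lemmas out of Lemma \ref{lemma1} and Lemma \ref{lema2}, and then pushing each of Parts i)--iv) through the unit-sphere monomial integrals $I_{S_n}^{\gamma_1\cdots}$ while tracking every covariant-derivative ordering and curvature contraction. Given those four Theorems the Corollary is essentially just bookkeeping, so the main obstacle I would plan around is not a difficulty of principle but the painstaking term-by-term verification that the contributions of all four Parts combine exactly as claimed, together with making sure the prefactor and the ${\rm tr}[{\rm id}]$ normalization are handled consistently across the four computations.
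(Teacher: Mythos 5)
Your proposal is correct and follows essentially the same route as the paper: the Corollary is obtained by substituting the four Theorems of Parts i)--iv) into the alternating sum defining $\rho$ in (\ref{a2}) and then specializing to $n=6$, $m=3$, where $2^{3}\cdot 2\pi^{3}/\Gamma(3)=8\pi^{3}$. Your explicit justification for pulling $a^{0}$ out of the symbol computation (its symbol is $\xi$-independent, so all $|\alpha|\ge 1$ terms of the composition formula vanish) is a detail the paper leaves implicit but does not change the argument.
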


\section{Appendix}

In this appendix, we will prove some facts used in Lemma \ref{lemkkk} and in the computation of the residue.

\begin{lem}(Also see (4.3) in \cite{D1})
\begin{align}
(1)\sigma_{-2m}(D^{-2m})&=|\xi|^{-2m};\nonumber\\
(2)\sigma_{-2m-1}(D^{-2m})&=m|\xi|^{-(2m-2)}\bigg(-i|\xi|^{-4}\xi_k(\Gamma^k-2\sigma^k)-2i|\xi|^{-6}\xi^j\xi_\alpha\xi_\beta\partial_{x_j}g^{\alpha\beta}\bigg)\nonumber\\
&+2i\sum_{k=0}^{m-2}\sum_{\mu=1}^{2m}(k+1-m)|\xi|^{-2m-4}\xi^\mu\xi_\alpha\xi_\beta\partial^x_\mu g^{\alpha\beta};\nonumber\\
(3)\sigma_{-2m-2}(D^{-2m})(x_0)&=-\frac{m}{4}|\xi|^{-2m-2}s+\frac{m(m+1)}{3}|\xi|^{-2m-4}\xi_\mu\xi_\nu R_{\mu\alpha\nu\alpha}(x_0),
\end{align}
where $s$ is the scalar curvature.
\end{lem}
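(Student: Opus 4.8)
The plan is to stay within the symbol calculus and to proceed in three stages: compute the low-order symbols of $D^{-2}=(D^2)^{-1}$ by inverting the symbol of $D^2$; pass to $D^{-2m}$ by induction on $m$ via $D^{-2m}=D^{-2}\circ D^{-2(m-1)}$; and specialise everything to Riemann normal coordinates centred at $x_0$ to extract the curvature terms. First, write $\sigma(D^{2})=p_2+p_1+p_0$ with $p_2=|\xi|^2$, $p_1=i(\Gamma^\mu-2\sigma^\mu)\xi_\mu$ and $p_0$ as in Lemma \ref{lemma1}, and write $\sigma(D^{-2})=q_{-2}+q_{-3}+q_{-4}+\cdots$. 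The defining relation $\sigma(D^{2})\circ\sigma(D^{-2})=1$ for the composition of symbols, collected in homogeneous orders $0,-1,-2,\dots$, gives $q_{-2}=|\xi|^{-2}$ and, for every $j\ge 1$, an expression for $q_{-2-j}$ as $-|\xi|^{-2}$ times an explicit polynomial in $p_2,p_1,p_0$, their $\xi$-derivatives, and the previously determined $q_{-2},\dots,q_{-1-j}$ together with their $x$-derivatives. Using $\partial_{x_k}|\xi|^{-2}=-|\xi|^{-4}\xi_\alpha\xi_\beta\partial_{x_k}g^{\alpha\beta}$, the order $-3$ term already displays the two structural pieces appearing in (2): the part proportional to $\xi_k(\Gamma^k-2\sigma^k)$ coming from $p_1q_{-2}$, and the part proportional to $\xi^j\xi_\alpha\xi_\beta\partial_{x_j}g^{\alpha\beta}$ coming from the term $-i\sum_k\partial_{\xi_k}p_2\,\partial_{x_k}q_{-2}$; one order further down gives $q_{-4}$, and this is the $m=1$ case of (1)--(3) (cf.\ (4.3) in \cite{D1}).

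Next I would prove the general-$m$ formulas by induction. Assuming (1)--(3) for $m-1$, apply the product formula to $\sigma(D^{-2m})=\sigma(D^{-2})\circ\sigma(D^{-2(m-1)})$ and collect the homogeneous pieces of orders $-2m$, $-2m-1$ and $-2m-2$. The order $-2m$ piece is immediate from $|\xi|^{-2}\cdot|\xi|^{-2(m-1)}$. For the order $-2m-1$ piece one combines $q_{-3}\cdot|\xi|^{-2(m-1)}$, the product of $|\xi|^{-2}$ with the subprincipal symbol of $D^{-2(m-1)}$, and the mixed term $-i\sum_k\partial_{\xi_k}(|\xi|^{-2})\,\partial_{x_k}(|\xi|^{-2(m-1)})$; after simplification these reproduce the displayed expression, the crucial point being that at each inductive step the coefficient $\sum_{k=0}^{m-2}(k+1-m)$ acquires exactly one more summand, which is how the telescoping in the recursion shows up. The order $-2m-2$ piece is handled in the same way, now also carrying along the $p_0$-contribution (which transports the scalar curvature) and the second $x$-derivatives of the metric.

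Finally I would evaluate at $x_0$ in normal coordinates, where $\Gamma^\mu(x_0)=\sigma^\mu(x_0)=\partial_{x_\mu}g^{\alpha\beta}(x_0)=0$, so that (2) vanishes at $x_0$ (as is used repeatedly in Section \ref{section:3}), and insert the normal-coordinate expansion $g^{\alpha\beta}(x)=\delta^{\alpha\beta}+\frac{1}{3}R_{\alpha\mu\beta\nu}(x_0)x^\mu x^\nu+O(|x|^3)$, whence $\partial_{x_\mu}\partial_{x_\nu}g^{\alpha\beta}(x_0)=\frac{1}{3}\big(R_{\alpha\mu\beta\nu}+R_{\alpha\nu\beta\mu}\big)(x_0)$, together with the value $\sigma_0(D^2)(x_0)=\frac{1}{4}s(x_0)$ read off from Lemma \ref{lemma1} and (\ref{a1}) (the spinorial $\partial\sigma$-terms cancelling by the antisymmetry of the curvature). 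Substituting these into the formulas of the previous step, the $p_0$-contribution produces the term $-\frac{m}{4}|\xi|^{-2m-2}s$, while all the second-derivative-of-metric contributions assemble into $\frac{m(m+1)}{3}|\xi|^{-2m-4}\xi_\mu\xi_\nu R_{\mu\alpha\nu\alpha}(x_0)$, in accordance with the normalisation already used for $m=2$ in (4.24) of \cite{KW} and in (3.4) of \cite{Wa7}. I expect the main obstacle to be the bookkeeping: correctly accumulating the $\sum_{k=0}^{m-2}(k+1-m)$ coefficient in the inductive step for the order $-2m-1$ term, and correctly collecting the many second-derivative-of-metric as well as $\partial\Gamma$ and $\partial\sigma$ contributions into the single Ricci-type expression $\xi_\mu\xi_\nu R_{\mu\alpha\nu\alpha}(x_0)$ in the order $-2m-2$ term.
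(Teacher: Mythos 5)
Your proposal is correct and follows essentially the same route as the paper: part (1) is immediate, part (2) comes from telescoping the composition $D^{-2m}=D^{-2}\circ D^{-2(m-1)}$ (the paper packages this induction as the closed-form recursion (3.8) of \cite{Wa8} and substitutes $\sigma_{-3}(D^{-2})$), and part (3) comes from the same peeling-off iteration combined with the symbol inversion $b_{-2},b_{-3},b_{-4}$ of $\sigma(D^2)$ and evaluation in normal coordinates, where the paper imports the final curvature bookkeeping from (12b) of \cite{Ka} and (4.24) of \cite{KW} rather than re-deriving the $\frac{1}{3}R_{\alpha\mu\beta\nu}x^\mu x^\nu$ expansion as you do. The only substantive difference is that you propose to derive from scratch the two ingredients the paper cites, which is harmless.
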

\begin{proof}(1) holds obviously.

(2)By (3.8) in \cite{Wa8}, we have
\begin{align}
\sigma_{1-\bar{n}}(D^{-\bar{n}+2})=\frac{\bar{n}-2}{2}\sigma_2^{(-\frac{\bar{n}}{2}+2)}\sigma_{-3}(D^{-2})-i\sum_{k=0}^{\frac{\bar{n}}{2}-3}\partial_{\xi_\mu}\sigma_2^{(-\frac{\bar{n}}{2}+k+2)}\partial_{x_\mu}\sigma_2^{-1}(\sigma_2^{-1})^k.
\end{align}
By plugging $\bar{n}=2m+2$ into the above equation, we get
\begin{align}
\sigma_{-2m-1}(D^{-2m})=m\sigma_2^{(-m+1)}\sigma_{-3}(D^{-2})-i\sum_{k=0}^{m-3}\sum_{\mu=1}^{2m}\partial_{\xi_\mu}\sigma_2^{(-m+k+1)}\partial_{x_\mu}\sigma_2^{-1}(\sigma_2^{-1})^k.
\end{align}
Then, by
$\sigma_{-3}(D^{-2})=-i|\xi|^{-4}\xi_k(\Gamma^k-2\sigma^k)-2i|\xi|^{-6}\xi^j\xi_\alpha\xi_\beta\partial_{x_j}g^{\alpha\beta},$ $\partial_{\xi_\mu}\sigma_2^{(-\frac{\bar{n}}{2}+k+2)}=2(k+1-m)|\xi|^{-2m+2k}\xi^\mu$ and $\partial_{x_\mu}\sigma_2^{-1}=-|\xi|^{-4}\xi_\alpha\xi_\beta\partial_{x_j}g^{\alpha\beta}$ and
(2) holds.

(3)Combined with \cite{KW}, we make a new arrangement of the proof process of (4.24) in \cite{KW}. Firstly, using $\sigma^{\tilde{\Delta}^{-m}}_{-2m}\equiv\sigma_2^{-m}$ we get the recursion relations
\begin{align}\label{AA2}
\sigma^{\tilde{\Delta}^{-\frac{n}{2}+1}}_{-n}(x,\xi)&=\sum_{|\alpha|=0}^2\sum_{k=2}^{4-|\alpha|}(-i)^{|\alpha|}\frac{1}{\alpha!}\partial_{\xi}^\alpha\sigma^{\tilde{\Delta}^{-\frac{n}{2}+2}}_{|\alpha|+k-n}\partial_{x}^\alpha\sigma^{\tilde{\Delta}^{-1}}_{-k}\nonumber\\
&=\sigma^{\tilde{\Delta}^{-\frac{n}{2}+2}}_{2-n}\sigma_2^{-1}+\sigma^{\tilde{\Delta}^{-\frac{n}{2}+2}}_{3-n}\sigma^{\tilde{\Delta}^{-1}}_{-3}+\sigma^{\tilde{\Delta}^{-\frac{n}{2}+2}}_{4-n}\sigma^{\tilde{\Delta}^{-1}}_{-4}\nonumber\\
&-i\partial_{\xi_\mu}\sigma^{\tilde{\Delta}^{-\frac{n}{2}+2}}_{3-n}\partial_{x_\mu}\sigma_2^{-1}-i\partial_{\xi_\mu}\sigma^{-\frac{n}{2}+2}_{2}\partial_{x_\mu}\sigma_{-3}^{\tilde{\Delta}^{-1}}\nonumber\\
&-\frac{1}{2}\partial_{\xi_\mu}\partial_{\xi_\nu}\sigma^{-\frac{n}{2}+2}_{2}\partial_{x_\mu}\partial_{x_\nu}\sigma_2^{-1},
\end{align}
where $\tilde{\Delta}=D^2.$\\
After further iterations
\begin{align}\label{AA1}
\sigma^{\tilde{\Delta}^{-\frac{n}{2}+2}}_{3-n}(x_0,\xi)=(\frac{1}{2}n-2)\sigma_2^{-\frac{n}{2}+3}\sigma^{\tilde{\Delta}^{-1}}_{-3}(x_0,\xi).
\end{align}
Write
 \begin{align}
\sigma(\tilde{\Delta})=\sigma_2+\sigma_1+\sigma_0;
~~~\sigma(\tilde{\Delta}^{-1})=\sum^{\infty}_{j=2}b_{-j}.
\end{align}
By the composition formula of pseudodifferential operators, we have
\begin{align}
1=\sigma(\tilde{\Delta}\circ \tilde{\Delta}^{-1})&=\sum_{\alpha}\frac{(-i)^{|\alpha|}}{\alpha!}\partial^{\alpha}_{\xi}[\sigma(\tilde{\Delta})]
\partial_x^{\alpha}[\sigma(\tilde{\Delta}^{-1})],
\end{align}
so
\begin{align}\label{hhh}
b_{-2}&=\sigma_2^{-1};~~~~b_{-3}=-\sigma_2^{-1}[\sigma_1\sigma_2^{-1}-i\sum_j\partial_{\xi_j}(\sigma_2)\partial_{x_j}(b_{-2})];\nonumber\\
b_{-4}&=-\sigma_2^{-1}[\sigma_1b_{-3}+\sigma_0b_{-2}-i\sum_j\partial_{\xi_j}(\sigma_1)\partial_{x_j}(b^{-2})-i\sum_j\partial_{\xi_j}(\sigma_2)\partial_{x_j}(b_{-3})-\frac{1}{2}\sum_j\partial_{\xi_j}\partial_{\xi_k}(\sigma_2)\partial_{x_j}\partial_{x_k}(b_{-2})].
\end{align}
Uingsing  $\partial_{x_\mu}(b_{-2})(x_0)=b_{-3}(x_0)=0$, we have
\begin{align}
b_{-4}(x_0)=\sigma^{\tilde{\Delta}^{-1}}_{-4}(x_0,\xi)=-\sigma_2^{-1}[\sigma_0b_{-2}-i\sum_j\partial_{\xi_j}(\sigma_2)\partial_{x_j}(b_{-3})-\frac{1}{2}\sum_j\partial_{\xi_j}\partial_{\xi_k}(\sigma_2)\partial_{x_j}\partial_{x_k}(b_{-2})]
\end{align}
Then
\begin{align}\label{aa3}
-i\sum_j\partial_{\xi_j}(\sigma_2)\partial_{x_j}(b_{-3})=-\sigma_2\sigma^{\tilde{\Delta}^{-1}}_{-4}-\sigma_0\sigma_2^{-1}+\frac{1}{2}\sum_j\partial_{\xi_j}\partial_{\xi_k}(\sigma_2)\partial_{x_j}\partial_{x_k}(b_{-2}).
\end{align}
Inserting Eq. (\ref{AA1}) and Eq. (\ref{aa3}) in relation (\ref{AA2}) yields
\begin{align}
\sigma^{\tilde{\Delta}^{-\frac{n}{2}+1}}_{-n}(x_0,\xi)&=\sigma^{\tilde{\Delta}^{-\frac{n}{2}+1}}_{2-n}\sigma_2^{-1}+\sigma^{-\frac{n}{2}+2}_{2}\sigma_{-4}^{\tilde{\Delta}^{-1}}+(2-\frac{n}{2})\sigma^{-\frac{n}{2}+1}_{2}[-\sigma^{-\frac{n}{2}+2}_{2}\sigma_{-4}^{\tilde{\Delta}^{-1}}-\sigma_0\sigma_2^{-1}\nonumber\\
&+\frac{1}{2}\sum_j\partial_{\xi_j}\partial_{\xi_k}(\sigma_2)\partial_{x_j}\partial_{x_k}(b_{-2})]-\frac{1}{2}\sum_j\partial_{\xi_j}\partial_{\xi_k}(\sigma_2^{-\frac{n}{2}+2})\partial_{x_j}\partial_{x_k}(b_{-2})\nonumber\\
&=\sigma^{\tilde{\Delta}^{-\frac{n}{2}+1}}_{2-n}\sigma_2^{-1}+(\frac{n}{2}-1)\sigma^{-\frac{n}{2}+2}_{2}\sigma_{-4}^{\tilde{\Delta}^{-1}}+(\frac{n}{2}-2)\sigma^{-\frac{n}{2}}_{2}\sigma_0\nonumber\\
&+\frac{4}{3}(\frac{n}{2}-2)(\frac{n}{2}-1)\sigma_2^{-\frac{n}{2}}\sigma_2^{-2}\sum_{\alpha\beta\gamma\delta}R_{\alpha\beta\gamma\delta}(x_0)\xi_\alpha\xi_\beta\xi_\gamma\xi_\delta.
\end{align}
After the iteration, and by $\sum_{\alpha\beta\mu\nu}\xi_\alpha\xi_\beta\xi_\mu\xi_\nu R_{\alpha\beta\mu\nu}(x_0)=0,$ we obtain
\begin{align}
\sigma^{\tilde{\Delta}^{-\frac{n}{2}+1}}_{-n}(x_0,\xi)&=\frac{n-2}{8}[n\sigma^{-\frac{n}{2}+2}_{2}\sigma_{-4}^{\tilde{\Delta}^{-1}}(x_0,\xi)+(n-4)\sigma^{-\frac{n}{2}}_{2}\sigma_0].
\end{align}
By plugging $n=2m+2$ into the above equation and Eq. (12b) in \cite{Ka}, then in normal coordinates, (3) holds.
\end{proof}
 $\mathbf{(i-1-f)}:$
 \begin{align}\label{p1}
&\int_{|\xi|=1}{\rm tr}\bigg(2\sum_{jr}\partial_{x_j}(a^1)\partial_{x_j}\partial_{x_r}^2(a^2)c(da^3)c(da^4)|\xi|^{-2m}\bigg)\sigma(\xi)\nonumber\\
&=\bigg(2\nabla(a^1)[\Delta(a^2)](x_0)-\frac{4}{3}\sum_\alpha R(\nabla (a^1),e_\alpha,\nabla (a^2),e_\alpha)\bigg)g(da^3,da^4){\rm tr}[id]Vol_{S^n}.
\end{align}
 \begin{proof}
In the normal coordinate system, we can obtain
 \begin{align}
 \nabla(a^1)[\Delta(a^2)](x_0)&=\sum_j\partial_{x_j}(a^1)\partial_{x_j}[\Delta(a^2)](x_0)\nonumber\\
 &=\sum_j\partial_{x_j}(a^1)\partial_{x_j}[(-\sum_{\alpha\beta}g^{\alpha\beta}\partial_{x_\alpha}\partial_{x_\beta}+\sum_{\alpha\beta}g^{\alpha\beta}\Gamma^k_{\alpha\beta}\partial_{x_k})(a^2)](x_0)\nonumber\\
 &=\sum_j\partial_{x_j}(a^1)[-\sum_{\alpha\beta}\partial_{x_j}(g^{\alpha\beta})\partial_{x_\alpha}\partial_{x_\beta}-\sum_{\alpha\beta}g^{\alpha\beta}\partial_{x_j}\partial_{x_\alpha}\partial_{x_\beta}\nonumber\\
 &+g^{\alpha\beta}(x_0)\sum^{\alpha\beta jk}\partial_{x_j}(\Gamma^k_{\alpha\beta})\partial_{x_k}+\sum^{\alpha\beta jk}\Gamma^k_{\alpha\beta}\partial_{x_j}\partial_{x_k}](a^2)(x_0)\nonumber\\
 &=-\sum_{jr}\partial_{x_j}(a^1)\partial_{x_j}\partial_{x_r}^2(a^2)+\frac{2}{3}\sum_\alpha R(\nabla (a^1),e_\alpha,\nabla (a^2),e_\alpha).
 \end{align}
 According to the following formula, we have
 \begin{align}
 \sum_{jr}\partial_{x_j}(a^1)\partial_{x_j}\partial_{x_r}^2(a^2)=\frac{2}{3}\sum_\alpha R(\nabla (a^1),e_\alpha,\nabla (a^2),e_\alpha)-\nabla(a^1)[\Delta(a^2)].
 \end{align}
 Then, after the integration, (\ref{p1}) holds.
\end{proof}
$\mathbf{(i-1-g)}$:
\begin{align}\label{p2}
\int_{|\xi|=1}{\rm tr}\bigg[4\sum_{jp}\partial_{x_j}(a^1)\partial_{x_j}\bigg(\partial_{x_p}(a^2)\partial_{x_p}[c(da^3)c(da^4)]\bigg)|\xi|^{-2m}\bigg]\sigma(\xi)=-4\nabla(a^1)\nabla(a^2)[g(da^3,da^4)]{\rm tr}[id]Vol_{S^n}.
\end{align}
 \begin{proof}
Take $|\xi|=1$, we can change the order of trace and derivative. Then
 \begin{align*}
 {\rm tr}\bigg[\sum_{jp}\partial_{x_j}(a^1)\partial_{x_j}\bigg(\partial_{x_p}(a^2)\partial_{x_p}[c(da^3)c(da^4)]\bigg)|\xi|^{-2m}\bigg]&=\nabla(a^1)\sum_p\partial_{x_p}(a^2)\partial_{x_p}[-g(da^3,da^4)]{\rm tr}[id]\nonumber\\
 &=-\nabla(a^1)\nabla(a^2)[g(da^3,da^4)]{\rm tr}[id].
 \end{align*}
 Therefore, after the integration, (\ref{p2}) holds.
\end{proof}
$\mathbf{(ii-1-b)}$:
\begin{align}\label{p3}
&\int_{|\xi|=1}{\rm tr}\bigg(2\sum_{kp}\partial_{x_k}^2(a^2)\partial_{x_p}(a^3)c(da^1)\partial_{x_p}[c(da^4)]|\xi|^{-2m}\bigg)\sigma(\xi)=2\Delta(a^2)g\bigg(\nabla(a^1),\nabla_{\nabla(a^3)}[\nabla(a^4)]\bigg){\rm tr}[id]Vol_{S^n}.
\end{align}
\begin{proof}
 By $c(da^1)=\sum_\mu e_\mu(a^1)c(e_\mu)$ and $c(da^4)=\sum_\nu e_\nu(a^4)c(e_\nu)$, we have
\begin{align*}
{\rm tr}\bigg(\sum_{kp}\partial_{x_k}^2(a^2)\partial_{x_p}(a^3)c(da^1)\partial_{x_p}[c(da^4)]\bigg)(x_0)&=-\sum_{kp}\partial_{x_k}^2(a^2)\partial_{x_p}(a^3)\sum_\mu e_\mu(a^1)\partial_{x_p}[e_\mu(a^4)](x_0){\rm tr}[id]\nonumber\\
&=\Delta(a^2)\sum_p\partial_{x_p}(a^3)\sum_\mu e_\mu(a^1)\partial_{x_p}[e_\mu(a^4)](x_0){\rm tr}[id]\nonumber\\
&=\Delta(a^2)\sum_\mu e_\mu(a^1)\sum_p\partial_{x_p}(a^3)\partial_{x_p}[e_\mu(a^4)](x_0){\rm tr}[id]\nonumber\\
&=\Delta(a^2)\sum_\mu e_\mu(a^1)\nabla(a^3)[e_\mu(a^4)](x_0){\rm tr}[id].
\end{align*}
And
\begin{align*}
g(\nabla(a^1),\nabla_{\nabla(a^3)}(\nabla(a^4)))(x_0)&=g\bigg(\sum_\mu e_\mu(a^1)e_\mu,\nabla_{\nabla(a^3)}(\sum_le_l(a^4)e_l)\bigg)(x_0)\nonumber\\
&=g\bigg(\sum_\mu e_\mu(a^1)e_\mu,\nabla_{\nabla(a^3)}(\sum_le_l(a^4)e_l)\bigg)(x_0)\nonumber\\
&=g\bigg(\sum_\mu e_\mu(a^1)e_\mu,\sum_l\nabla(a^3)(e_l(a^4))e_l+\sum_le_l(a^4)\nabla_{\nabla(a^3)}e_l\bigg)(x_0)\nonumber\\
&=\sum_{\mu } e_\mu(a^1)\nabla(a^3)[e_\mu(a^4)].
\end{align*}
Then
\begin{align*}
\Delta(a^2)\sum_\mu e_\mu(a^1)\nabla(a^3)[e_\mu(a^4)](x_0){\rm tr}[id]=\Delta(a^2)g\bigg(\nabla(a^1),\nabla_{\nabla(a^3)}[\nabla(a^4)]\bigg){\rm tr}[id].
\end{align*}
Therefore, after the integration, (\ref{p3}) holds.
\end{proof}
$\mathbf{(ii-1-g)}$:
\begin{align}\label{p5}
&\int_{|\xi|=1}{\rm tr}\bigg(4\sum_{jp}\partial_{x_j}(a^2)\partial_{x_p}(a^3)c(da^1)\partial_{x_j}\partial_{x_p}[c(da^4)]|\xi|^{-2m}\bigg)\sigma(\xi)\nonumber\\
&=-4\Big(g(\nabla(a^1),\nabla_{\nabla(a^2)}\nabla_{\nabla(a^3)}\nabla(a^4))-g(\nabla_{\nabla(a_1)}\nabla(a_4),\nabla_{\nabla(a_2)}\nabla(a_3))\nonumber\\
&-\frac{1}{2}R(\nabla(a^2),\nabla(a^3),\nabla(a^1),\nabla(a^4))\Big){\rm tr}[id]Vol_{S^n}.
\end{align}
 \begin{proof}
 By $c(da^1)=\sum_\mu e_\mu(a^1)c(e_\mu)$ and $c(da^4)=\sum_\nu e_\nu(a^4)c(e_\nu)$, we have
 \begin{align*}
 {\rm tr}\bigg(\sum_{jp}\partial_{x_j}(a^2)\partial_{x_p}(a^3)c(da^1)\partial_{x_j}\partial_{x_p}[c(da^4)]\bigg)&=-\sum_{jp}\partial_{x_j}(a^2)\partial_{x_p}(a^3)\sum_\mu e_\mu(a^1)\partial_{x_j}\partial_{x_p}[ e_\mu(a^4)]{\rm tr}[id].
 \end{align*}
 Moreover
 \begin{align*}
 &g\bigg(\nabla(a^1),\nabla_{\nabla(a^2)}\nabla_{\nabla(a^3)}\nabla(a^4)\bigg)(x_0)\nonumber\\
 &=g\bigg(\nabla(a^2),\nabla_{\nabla(a^1)}\nabla_{\nabla(a^3)}\Big(\sum_\mu e_\mu(a^4)e_\mu\Big)\bigg)(x_0)\nonumber\\
 &= g\bigg(\nabla(a^2),\sum_\mu\nabla_{\nabla(a^1)}[\nabla(a^3)e_\mu(a^4)]e_\mu+\sum_\mu e_\mu(a^4)\nabla_{\nabla(a^3)}e_\mu\bigg)(x_0)\nonumber\\
 &= g\bigg(\nabla(a^2),\sum_\mu\nabla(a^1)[\nabla(a^3)e_\mu(a^4)]e_\mu\bigg)(x_0)+g\bigg(\nabla(a^2),\sum_\mu \nabla(a^1)e_\mu(a^4)\nabla_{\nabla(a^3)}e_\mu\bigg)(x_0)\nonumber\\
&+g\bigg(\nabla(a^2),\sum_\mu \nabla(a^3)e_\mu(a^4)\nabla_{\nabla(a^1)}e_\mu\bigg)(x_0)+g\bigg(\nabla(a^2),\sum_\mu e_\mu(a^4)\nabla_{\nabla(a^1)}\nabla_{\nabla(a^3)}e_\mu\bigg)(x_0)\nonumber\\
&= g\bigg(\nabla(a^2),\sum_\mu\nabla(a^1)(\nabla(a^3)e_\mu(a^4))e_\mu\bigg)(x_0)+g\bigg(\nabla(a^2),\sum_\mu e_\mu(a^4)\nabla_{\nabla(a^1)}\nabla_{\nabla(a^3)}e_\mu\bigg)(x_0),
 \end{align*}
 where
 \begin{align*}
 &g\bigg(\nabla(a^2),\sum_\mu\nabla(a^1)(\nabla(a^3)e_\mu(a^4))e_\mu\bigg)(x_0)\nonumber\\
 &=g\bigg(\partial_{x_j}(a^2)e_j,\sum_{\mu lq}\partial_{x_1}(a^1)\partial_{x_l}[\partial_{x_q}(a^3)\partial_{x_q}(e_\mu(a^4))]e_\mu\bigg)(x_0)\nonumber\\
 &=\sum_{jlp}\partial_{x_j}(a^2)\partial_{x_l}(a^1)\partial_{x_q}(a^3)\partial_{x_l}\partial_{x_q}[ e_j(a^4)]+\sum_{jlq}\partial_{x_j}(a^1)\partial_{x_l}(a^2)\partial_{x_q}\partial_{x_l}(a^3)\partial_{x_j}\partial_{x_q}(a^4)\nonumber\\
 &=\sum_{jlp}\partial_{x_j}(a^1)\partial_{x_l}(a^2)\partial_{x_q}(a^3)\partial_{x_l}\partial_{x_q}[ e_j(a^4)]+\sum_{jlq}\partial_{x_j}(a^1)\partial_{x_l}(a^2)\partial_{x_q}\partial_{x_l}(a^3)\partial_{x_j}\partial_{x_q}(a^4);
 \end{align*}
 \begin{align*}
 &g\bigg(\nabla_{\nabla(a^1)}\nabla(a^4),\nabla_{\nabla(a^2)}\nabla(a^3)\bigg)=\sum_{jlq}\partial_{x_j}(a^1)\partial_{x_l}(a^2)\partial_{x_q}\partial_{x_l}(a^3)\partial_{x_j}\partial_{x_q}(a^4);
 \end{align*}
 \begin{align*}
 &g\bigg(\nabla(a^2),\sum_\mu e_\mu(a^4)\nabla_{\nabla(a^1)}\nabla_{\nabla(a^3)}e_\mu\bigg)(x_0)\nonumber\\
 &=g\bigg(\nabla(a^2),\sum_\mu e_\mu(a^4)\nabla_{\nabla(a^1)}\sum_{jl}g^{jl}\frac{\partial a^3}{\partial_{x_j}}\nabla_{\partial_{x_l}}e_\mu\bigg)\nonumber\\
 &=g\bigg(\nabla(a^2),\sum_\mu e_\mu(a^4)\sum_{j}\frac{\partial a^3}{\partial_{x_j}}(x_0)\nabla_{\nabla(a^1)}\nabla_{\partial_{x_j}}e_\mu\bigg)\nonumber\\
 &=g\bigg(\nabla(a^2),\sum_\mu e_\mu(a^4)\sum_{jl}\frac{\partial a^3}{\partial_{x_j}}\frac{\partial a^1}{\partial_{x_l}}\sum_\alpha(\partial_{x_l}H_{j\alpha\mu}e_\alpha+H_{j\alpha\mu}\nabla_{\partial_{x_l}}e_\alpha)(x_0)\bigg)\nonumber\\
 &=\frac{1}{2}\sum_{\mu jl\alpha}e_\mu(a^4)e_j(a^3)e_l(a^1)R_{lj\alpha\mu}(x_0)e_\alpha(a^2)\nonumber\\
 &=\frac{1}{2}R(\nabla(a^1),\nabla(a^3),\nabla(a^2),\nabla(a^4)),
 \end{align*}
 where $H_{j\alpha\mu}=g(\nabla_{\partial_{x_j}}e_\mu,e_\alpha)$. Therefore
 \begin{align*}
 &\sum_{jp}\partial_{x_j}(a^2)\partial_{x_p}(a^3)\sum_\mu e_\mu(a^1)\partial_{x_j}\partial_{x_p}[ e_\mu(a^4)]\nonumber\\
 &=g(\nabla(a^1),\nabla_{\nabla(a^2)}\nabla_{\nabla(a^3)}\nabla(a^4))-g(\nabla_{\nabla(a_1)}\nabla(a_4),\nabla_{\nabla(a_2)}\nabla(a_3))-\frac{1}{2}R(\nabla(a^1),\nabla(a^3),\nabla(a^2),\nabla(a^4)).
 \end{align*}
After the integration, (\ref{p5}) holds.
\end{proof}
$\mathbf{(ii-1-h)}$:
\begin{align}\label{p6}
&\int_{|\xi|=1}{\rm tr}\bigg(4\sum_{jp}\partial_{x_j}(a^2)\partial_{x_j}\partial_{x_p}(a^3)c(da^1)\partial_{x_p}[c(da^4)]|\xi|^{-2m}\bigg)\sigma(\xi)=-4g(\nabla(a^1),\nabla_{\nabla_{\nabla(a^2)}\nabla(a^3)}\nabla(a^4)){\rm tr}[id]Vol_{S^n}.
\end{align}
\begin{proof}
By $c(da^1)=\sum_\mu e_\mu(a^1)c(e_\mu)$ and $c(da^4)=\sum_\nu e_\nu(a^4)c(e_\nu)$, we have
\begin{align*}
{\rm tr}\bigg(\sum_{jp}\partial_{x_j}(a^2)\partial_{x_j}\partial_{x_p}(a^3)c(da^1)\partial_{x_p}[c(da^4)]\bigg)(x_0)&=-\sum_{jp}\partial_{x_j}(a^2)\partial_{x_j}\partial_{x_p}(a^3)\sum_\mu e_\mu(a^1)\partial_{x_p}[e_\mu(a^4)](x_0){\rm tr}[id].
\end{align*}
Moreover
\begin{align*}
\nabla_{\nabla(a^2)}\nabla(a^3)(x_0)&=\sum_je_j(a^2)\nabla_{e_j}[\sum_le_l(a^3)e_l](x_0)\nonumber\\
&=\sum_je_j(a^2)\sum_le_je_l(a^3)e_l(x_0)\nonumber\\
&=\sum_j\partial_{x_j}(a^2)\sum_l\partial_{x_j}\partial_{x_l}(a^3)\partial_{x_l}(x_0).
\end{align*}
Then
\begin{align*}
\sum_{jp}\partial_{x_j}(a^2)\partial_{x_j}\partial_{x_p}(a^3)\sum_\mu e_\mu(a^1)\partial_{x_p}[e_\mu(a^4)](x_0){\rm tr}[id]&=\sum_{jp}\partial_{x_j}(a^2)\partial_{x_j}\partial_{x_p}(a^3)g\bigg(\nabla(a^1),\nabla_{\partial_{x_p}}\nabla(a^4)\bigg)(x_0){\rm tr}[id]\nonumber\\
&=g(\nabla(a^1),\nabla_{\nabla_{\nabla(a^2)}\nabla(a^3)}\nabla(a^4)){\rm tr}[id].
\end{align*}
After the integration, (\ref{p6}) holds.
\end{proof}

\section*{ Declarations}
\textbf{Ethics approval and consent to participate:} Not applicable.

\textbf{Consent for publication:} Not applicable.

\textbf{Availability of data and materials:} The authors confrm that the data supporting the fndings of this study are available within the article.

\textbf{Competing interests:} The authors declare no competing interests.

\textbf{Funding:} This research was funded by National Natural Science Foundation of China: No.11771070, No.12401059 and the Fundamental Research Funds for the Central Universities N2405015.

\textbf{Author Contributions:} All authors contributed to the study conception and design. Material preparation,
data collection and analysis were performed by TW and YW. The frst draft of the manuscript was written
by TW and all authors commented on previous versions of the manuscript. All authors read and approved
the final manuscript.
\section*{Acknowledgements}
This work was supported by NSFC. 11771070, NSFC. 12401059 and the Fundamental Research Funds for the Central Universities N2405015. The authors thank the referee for his (or her) careful reading and helpful comments.

\section*{References}

\end{document}